\newlength{\defbaselineskip}
\newcommand{\setlinespacing}[1]%
           {\setlength{\baselineskip}{#1 \defbaselineskip}}
\theoremstyle{plain}
\newtheorem{theorem}{Theorem}[section]
\newtheorem{lemma}[theorem]{Lemma}
\newtheorem{proposition}[theorem]{Proposition}
\newtheorem{corollary}[theorem]{Corollary}
\newtheorem{Fact}[theorem]{Fact}
\theoremstyle{definition}
\newtheorem{definition}[theorem]{Definition}
\newtheorem{ass}[theorem]{Assumption}
\theoremstyle{remark}
\newtheorem{remark}[theorem]{Remark}
\numberwithin{equation}{section}
\newcommand{\eps}{\varepsilon}
\DeclareMathOperator*{\esssup}{ess\,sup}
\newcommand{\cP}{\mathcal{P}}
\newcommand{\cH}{\mathcal{H}}
\newcommand{\cM}{\mathcal{M}}
\newcommand{\cF}{\mathcal{F}}
\newcommand{\cA}{\mathcal{A}}
\newcommand{\cX}{\mathcal{X}}
\newcommand{\fC}{\mathfrak{C}}
\newcommand{\fp}{\mathfrak{p}}
\newcommand{\fd}{\mathfrak{d}}
\newcommand{\bE}{\mathbb{E}}
\newcommand{\bP}{\mathbb{P}}
\newcommand{\bR}{\mathbb{R}}
\newcommand{\bF}{\mathbb{F}}
\newcommand{\bG}{\mathbb{G}}
\newcommand{\prog}{\textrm{Prog}}
\newcommand{\etamax}{\|\eta\|}
\newcommand{\etamin}{\eta_\star}
\newcommand{\lambdamin}{\lambda_\star}
\newcommand{\kappamax}{\kappa_{\max}}
\newcommand{\lambdamax}{\lambda_{\max}}
\begin{document}

\title{A Mean Field Game of Optimal Portfolio Liquidation.
}

\author{Guanxing Fu\footnote{Department of Applied Mathematics, The Hong Kong Polytechnic University, Hung Hom, Kowloon, Hong Kong; email:fuguanxing725@gmail.com}
\and Paulwin Graewe\footnote{Deloitte Consulting GmbH, Kurf\"urstendamm 23, 10719 Berlin, Germany; email:pgraewe@deloitte.de} \and Ulrich Horst\footnote{Department of Mathematics, and School of Business and Economics, Humboldt-Universit\"at zu Berlin,
         Unter den Linden 6, 10099 Berlin, Germany; email: horst@math.hu-berlin.de}   \and Alexandre Popier\footnote{Laboratoire Manceau de Math\'ematiques, Le Mans Universit\'e, Avenue Olivier Messiaen, 72058 Le Mans Cedex 9, France; email: Alexandre.Popier@univ-lemans.fr}}
%

\maketitle

\begin{abstract}
We consider a mean field game (MFG) of optimal portfolio liquidation under asymmetric information. We prove that the solution to the MFG can be characterized in terms of a FBSDE with possibly singular terminal condition on the backward component or, equivalently, in terms of a FBSDE with finite terminal value, yet singular driver. Extending the method of continuation to linear-quadratic FBSDE with singular driver we prove that the MFG has a unique solution. Our existence and uniqueness result allows to prove that the MFG with possibly singular terminal condition can be approximated by a sequence of MFGs with finite terminal values.
\end{abstract}

{\bf AMS Subject Classification:} 93E20, 91B70, 60H30

{\bf Keywords:}{ mean field game, portfolio liquidation, continuation method, singular FBSDE}

\section{Introduction and overview}

Mean field games (MFGs) are a powerful tool to analyse strategic interactions in large populations when each individual player has only a small impact on the behavior of other players. In the economics literature, mean-field-type (or anonymous) games were first considered by Jovanovic and Rosenthal \cite{JR-1988} and later analyzed by many authors including \cite{B-1999, DP-2015, H-2005}. In the mathematical literature MFGs were independently introduced by Huang, Malham\'e and Caines \cite{HMC-2006} and Lasry and Lions \cite{LL-2007}. MFGs have been successfully applied to various economic problems, ranging from systemic risk management \cite{CFS-2015} to principal agent problems \cite{EMP-2016,NZ-2018} and from portfolio optimization \cite{LZ-2018} to optimal exploitation of exhaustible resources \cite{CS-2017}.

In a standard MFG, each player $i \in \{1, ..., N\}$ chooses an action $u^i$ from a given set of admissible controls that minimizes a  cost functional of the form
    \begin{equation}\label{cost-fun}
        J^i(\vec u)=\bE\left[\int_0^Tf(t,X^i_t,\bar{\mu}^N_t,u^i_t)dt+g(X^i_T,\bar{\mu}^N_T)  \big | \cX^i = x^i\right]
    \end{equation}
subject to the state dynamics
   \begin{equation}\label{state-without-singular}
    \left\{\begin{array}{ll}
        dX^i_t=b(t,X^i_t,\bar{\mu}^N_t,u^i_t)\,dt+\sigma(t,X^i_t,\bar{\mu}^N_t,u^i_t)\,dW^i_t,\\
        X^i_0={\cX^i}
        \end{array}.
        \right.
    \end{equation}
Here, $W^1,\cdots, W^N$ are independent Brownian motions, and $\cX^1, ..., \cX^N$ are independent and identically distributed random variables with law $\nu$ that are independent of the Brownian motions. All stochastic processes and random variables are defined on an underlying filtered probability space\footnote{We assume throughout that all filtrations are augmented by the null sets. }.
The vector $\vec u=(u^1,\cdots,u^N)$ denotes the action profile, and $\bar{\mu}^N_t:=\frac{1}{N}\sum_{j=1}^N\delta_{X^j_t}$ denotes the empirical distribution of the individual players' states at time $t \in [0,T]$. It is usually assumed that the players observe their own initial state and know the common distribution $\nu$ of the other player's initial states.

The existence of {\it approximate Nash equilibria} for large populations can be established using a representative agent approach.The idea is to approximate the dynamics of the empirical distribution of the states by a deterministic measure-valued process, and then to consider the optimization problem of a representative player subject to the equilibrium constraint that the distribution of the representative player's state process under her optimal strategy coincides with the pre-specified measure-valued process. More precisely, denoting by $\mathcal{P}(\mathbb{R}^d)$ the space of probability measures on $\mathbb{R}^d$, by $\textrm{Law}(X)$ the law of a stochastic process $X$ and by  $\mathcal{X}$ a random initial state with distribution $\nu$ the resulting MFG can be formally described as follows:
\begin{equation}\label{model-MFG}
        \left\{
        \begin{array}{ll}
        1.&\textrm{fix a deterministic function }t\in[0,T]\mapsto\mu_t\in\mathcal{P}(\mathbb{R}^d);\\
        2.&\textrm{solve the corresponding stochastic control problem}:\\
         &\quad \inf_{u} \bE\left[\left.\int_0^Tf(t,X^{\mathcal X}_t,\mu_t,u_t)\,dt+g(X^{\mathcal X}_T,\mu_T)\right|\mathcal X\right],\\
        &\textrm{subject to the state dynamics}\\
        &~~ dX^{\mathcal X}_t =b(t,X^{\mathcal X}_t,{\mu}_t,u_t)\,dt+\sigma(t,X^{\mathcal X}_t,{\mu}_t,u_t)\,dW_t, \\ & ~~~~~ X_0=\mathcal X \\
        3.&\textrm{solve } \textrm{Law}(X^{*,\mathcal X}) =\mu{\textrm{ where }X^{*,\mathcal X}\textrm{ is the optimal state process from 2}}.
        \end{array}.
        \right.
\end{equation}

Let $\mu^*$ be a solution to the above fix point problem and let $u^*$ be the representative player's optimal response to  $\mu^*$ given $\cX$. Then $u^* = \phi(\cX,W)$ for some measurable function $\phi$ from $\bR \times \mathcal C[0,T]$ into a suitable function space, and each individual player's optimal response to $\mu^*$ given her initial state $\cX^i = x^i$ is  $u^{*,i}=\phi(x^i,W^i)$. Under suitable assumptions the homogeneous action profile $\left( \phi(\cdot,\cdot), ..., \phi(\cdot,\cdot) \right)$ forms an $\epsilon$-equilibrium in the original game if $N$ is large enough.

There are basically four approaches to solve mean field games. In their original paper \cite{LL-2007}, Lasry and Lions followed an analytic approach. They analyzed a coupled forward-backward PDE system, where the backward component is the Hamiltion-Jacobi-Bellman equation arising from the representative agent's optimization problem, and the forward component is a Kolmogorov-Fokker-Planck equation that characterizes the dynamics of the state process; see also \cite{Gomes2013}.  Merging the forward backward system into a single master equation, the dynamics of the MFG can alternatively be described in terms of some form of second order PDE on the space of probability measures; see \cite{CDLL-2015,Car:Del-2014,Chassagneux2014,DLR-2018} for details. A more probabilistic approach was introduced by Carmona and Delarue in \cite{CD-2013}. Using a maximum principle of Pontryagin type, they showed that the fixed point problem reduces to solving a McKean-Vlasov FBSDEs; see also \cite{BSYY-2016,CZ-2016}.  A \textit{relaxed solution} concept to MFGs was introduced by Lacker in \cite{L-2015} and later extended by various authors including \cite{CDL-2016,FH-2017}. In this paper we apply a probabilistic approach to analyze a novel class of MFGs arising in models of optimal portfolio liquidation under market impact. Our existence and uniqueness of equilibrium result is based on a new existence of solutions result for FBSDE systems with singular drivers.

\subsection{Single player models of optimal portfolio liquidation}
Single-player portfolio liquidation models have been extensively analyzed in recent years. Their main characteristic is a singularity at the terminal time of the Hamilton-Jacobi-Bellmann equation. The majority of the optimal liquidation literature assumes that only absolutely continuous trading strategies are allowed; see e.g. \cite{AJK-2014,GHS-2013} and references therein. In such models the controlled state sequence follows a dynamics of the form
\[
	X_t=x-\int_0^t\xi_s\,ds,
\]
where $x > 0$ is the initial portfolio that a trader needs to unwind, and $\xi$ is the trading rate. The set of admissible controls is confined to those processes $\xi$ that satisfy almost surely the liquidation constraint
\[
	X_{T} = 0.
\]
It is typically assumed that the unaffected benchmark price process follows a one-dimensional Brownian motion $W$ (or some Brownian martingale) and that the trader's transaction price is given by
\[
	S_t=s_0+\int_0^t\sigma_s\,d W_s-\int_0^t\kappa_s\xi_s\,ds-\eta_t\xi_t
\]
where $\sigma$ is a (sufficiently regular) stochastic volatility process.
The integral term accounts for permanent price impact, i.e. the impact of past trades on current prices, while the term $\eta_t \xi_t$ accounts for the instantaneous impact that does not affect future transactions. The expected cost functional is typically of the linear-quadratic form
\[
	\bE\left[\int_0^T\!\!\!\Big(\kappa_s \xi_s X_s + \eta_s\xi_s^2+\lambda_sX_s^2  \Big)\,ds \right]
\]
where $\kappa, \eta$ and $\lambda$ are one-dimensional bounded adapted and non-negative processes. The process $\lambda$ describes the trader's degree of risk aversion or her belief about the volatility process; it penalizes slow liquidation. The process $\eta$ describes the degree of market illiquidity; it penalizes fast liquidation. The process $\kappa$ describes the impact of past trades on current transaction prices.

There are basically two approaches to overcome the challenges resulting from the terminal state constraint. The majority of the literature, including Ankirchner et al.~\cite{AJK-2014}, Graewe et al.~\cite{GHQ-2015}, Kruse and Popier~\cite{KP-2016} and Popier \cite{P-2006,Popier-2007} considers finite approximations of the singular terminal value, and then shows that the minimal solution to the value function with singular terminal condition can be obtained by a monotone convergence argument. A second approach, originally introduced in Graewe et al. \cite{GHS-2013} and further generalised in Graewe and Horst \cite{GH-2017} is to determine the precise asymptotic behaviour of a potential solution to the HJB equation at the terminal time, and to characterize the value function in terms of a PDE or BSDE with finite terminal value yet singular driver, for which the existence of a solution in a suitable space can be proved using standard fixed point arguments.

If the transactions are not directly observable, then it is natural to assume that the permanent impact is driven by the market’s expectation about the trader’s transactions as in \cite{basei:pham:17}, given the publicly observable information. It leads to a mean field control problem, which is not the focus of our paper.

\subsection{A MFG of optimal portfolio liquidation}

We consider a MFG of optimal portfolio liquidation among asymmetrically informed players. In order to introduce the game, we fix a probability space $(\Omega, \mathcal{G},\mathbb P)$ that carries independent standard Brownian motions $W^0, W^1, ..., W^N$ with $W^0$ in one dimension and $W^i$ in $m-1$ dimension and independent and identically distributed one-dimensional random variables $\cX^1, ...., \cX^N$ with law $\nu$ that are independent of the Brownian motions. The Brownian motion $W^0$ drives the unaffected benchmark price process. We assume that $W^0$ is observable by all agents. The Brownian motion $W^i$ is private information to player $i$ and determines that player's cost function. We may think of $W^i$ as measuring a player's individual degree of market impact and/or subjective belief about the price volatility. The random variables $\cX^1, ...., \cX^N$ specify the respective players' initial portfolios. We assume that each player observes the realization of her initial portfolio and knows the distribution of all the other initial portfolios.

Following \cite{CL-2016} we assume that the transaction price for each player $i=1, ..., N$ is given by
\[
    S^i_t=s^i_0+\int_0^t\sigma^i_s\,d W^0_s-\int_0^t\frac{\kappa^i_s}{N}\sum_{j=1}^N\xi^j_s\,ds-\eta^i_t\xi^i_t.
\]
In particular, the permanent price impact depends on the players' average trading rate.
Given her initial portfolio $\cX^i = x^i$ the optimization problem of player $i=1, ..., N$ is to minimize the cost functional
\begin{equation}\label{cost-player-i}
    J^{N,i}\left(\vec \xi\right)=\bE\left[\int_0^T \left(\frac{\kappa^i_t}{N}\sum_{j=1}^N\xi^j_tX^i_t+\eta^i_t(\xi^i_t)^2+\lambda^i_t (X^i_t)^2\right) dt \big | \cX^i = x^i\right]
\end{equation}
subject to the state dynamics
\begin{equation}\label{state-player-i}
\begin{split}
    dX^i_t &=-\xi^i_t\,dt, \\
    X^i_0 & =\cX^i, \quad X^i_T=0.
   \end{split}
\end{equation}
Here, $\vec{\xi}=(\xi^1,\cdots,\xi^N)$ is the vector of strategies of all the players. We assume that the one-dimensional cost coefficients $(\kappa^i, \eta^i, \lambda^i)$ have the same distribution across players and are adapted to the filtration
\begin{equation}\label{Fi}
	\bF^i:=(\mathcal F^i_t,0\leq t\leq T), \quad \mbox{with} \quad
	\mathcal F^i_t:=\sigma({\mathcal X^i},W^0_s,W^i_s,0\leq s\leq t).
\end{equation}

\begin{remark}
As pointed above, we assume that all players observe the Brownian motion $W^0$ that drives the unaffected benchmark price process; this motivates the individual information sets. Although we strongly believe that all our results carry over to the more general case where the agents observe only the price process itself, we prefer to work under the stronger assumption that $W^0$ is observable to simplify the analysis.
\end{remark}

Our game is different from the majority of the MFG literature in at least three respects. First, as in \cite{CL-2015, Gomes2013} the players interact through the impact of their strategies rather than states on the other players' payoff functions. Second, all players observe the common Brownian motion $W^0$ that drives the benchmark price process. Hence, ours is a MFG with common noise. While MFGs with common noise have been investigated before (see, e.g.~\cite{CDL-2016}) the nature of both the common and the idiosyncratic noise in our model is very different from the existing literature. Third, the individual state dynamics are subject to a terminal state constraint arising from the liquidation requirement.  MFGs with terminal state constraint have been considered before in the literature by means of so-called mean field (game) planning problems (MFGP) introduced by Lions in his lectures at Coll\`ege de France
(2009-2010). In these problems the terminal state constraint is given by a target density of the state at the terminal time. While our problem formally belongs to the literature on MFGP, see e.g. \cite{Achdou2012,Gomes2018,Porretta2014} and the references therein, ours seems to be the first paper that considers a MFG with strict terminal state constraint.

\subsubsection{The MFG}

In order to specify the resulting MFG, let $W^0$ and $W$ be independent Brownian motions of dimension $1$ and $m-1$, respectively\footnote{The same as $N$ player game, we may interpret $W^0$ as the common information to all players and $W$ as the private information to the representative player.}, and $\cX$ be an independent one-dimensional random variable with law $\nu$ defined on some probability space, again denoted $(\Omega,\mathcal G, \bP)$. Let  $\bF^0:=(\mathcal F^0_t,0\leq t\leq T)$ with $\mathcal F^0_t=\sigma(W^0_s,0\leq s\leq t)$ be the filtration generated by $W^0$ and let $\bF:=(\mathcal F_t,0\leq t\leq T)$ with $\mathcal F_t:=\sigma(\mathcal X,W^0_s,W_s,0\leq s\leq t).$ The MFG associated with the $N$-player game \eqref{cost-player-i} and \eqref{state-player-i} is then given by:
\begin{equation} \label{MFG-PVE}
\left\{\begin{array}{ll}
    1.\textrm{ fix a } \bF^0\textrm{ progressively measurable process }\mu~ (\textrm{in some suitable space});\\
    2.\textrm{ solve the corresponding constrained stochastic control problem}:\\
    \quad \inf_{\xi}\bE\left[\left.\int_0^T\left(\kappa_s \mu_sX_s+\eta_s\xi^2_s+\lambda_s X_s^2\right)\,ds\right|\mathcal X\right]\\
   \textrm{ subsect to } \\
    \quad dX_t=-\xi_t\,dt,X_0=\mathcal X~\textrm{ and }X_T=0;\\
    3.\textrm{ search for the fixed point }
   \mu_t=\bE[\xi^{*}_t|\cF^0_t],\textrm{ for }a.e.~ t\in[0,T]
\end{array}\right.
\end{equation}
where $\xi^{*}$ is the optimal strategy from 2 and the processes $(\kappa,\eta, \lambda)$ are adapted to the filtration $\bF$. We denote by $\mu^*$ a solution to the fixed point problem in Step 3. 

We apply the probabilistic method to solve the MFG with terminal constraint \eqref{MFG-PVE}. In a first step we show how the analysis of our MFG can be reduced to the analysis of a conditional mean-field type FBSDE. The forward component describes the optimal portfolio process; hence both its initial and terminal condition are known. The backward component describes the optimal trading rate; its terminal value is unknown. Making an affine ansatz, we show that the mean-field type FBSDE with unkown terminal condition can be replaced by a coupled FBSDE with known initial and terminal condition, yet singular driver. Proving the existence of a small time solution to this FBSDE by a fixed point argument is not hard. The challenge is to prove the existence of a global solution on the whole time interval. Under a weak interaction condition that has been used in the game theory literature before (see, e.g.~\cite{H-2005}) we prove the existence and uniqueness of a global solution by a generalization of the method of continuation established in \cite{hu:95,PW-1999} to linear-quadratic FBSDE systems with singular driver. Under the additional assumption that all players share the same cost structure, i.e.~that
\[
	\kappa^i = \kappa(\cX^i, W^0, W^i), \quad \eta^i = \eta(\cX^i, W^0, W^i), \quad 	\lambda^i = \lambda(\cX^i, W^0, W^i)
\]
for bounded measurable functions $\kappa, \eta, \lambda$ we prove that each player's best response to the mean-field equilibrium $\mu^*$ is of the form
\[
	\xi^{*,i} = \phi(\cX^i, W^0, W^i)
\]
for some function $\phi$ and that the resulting homogeneous action profile forms an $\epsilon$-equilibrium in the original $N$-player game.

The common information case where all the cost coefficients are measurable with respect to the common factor can be analysed in greater detail. When different players hold different initial portfolios, then the optimal portfolio processes are given as weighted averages of the players' initial portfolios and the differences of their own and the average initial portfolio. In this case, we show that if the average initial portfolio is positive and a player holds an above average initial portfolio, then her optimal portfolio process is always positive. If, however, a player holds a positive yet well below average initial portfolio, then it is optimal to quickly unwind the position, to then take a negative position and to buy the stock back by the end of the trading period. This is intuitive as players with negative portfolios benefit from the negative price trends generated by other players while the cost of unwinding a small portfolio is low. As such, our result suggests that traders with small portfolios act as liquidity providers in equilibrium even if their initial holds are positive.

The benchmark case of deterministic coefficients can be solved in closed form. For this case we show that when the strength of interaction $\kappa$ in \eqref{MFG-PVE} is large and all players share the same initial portfolio, the players initially trade very fast in equilibrium to avoid the negative drift generated by the mean field interaction. Our model thus provides a possible explanation for large price drops in markets with many strategically interacting homogenous investors. We also show that the deterministic case is equivalent to a single player model with suitably adjusted cost terms.

Under mild additional assumptions on the market impact parameters we further prove that the solution to the MFG can be approximated by the solutions to a sequence of MFGs where the liquidation constraint is replaced by an increasing penalization of open positions at the terminal time. The convergence result can be viewed as a consistency result for both, the unconstrained and the constrained problem.

The three papers closest to our model are Cardaliaguet and Lehalle \cite{CL-2016}, Carmona and Lacker \cite{CL-2015}, Huang, Jaimungal and Nourin \cite{HJN-2015}. In \cite{CL-2015}, the authors propose a specific portfolio liquidation model where each players portfolio is subject to exogenous fluctuations (customer flow) described by independent Brownian motions. As such, their model is much closer to a standard MFG than ours, but no liquidation constraint is possible in their framework. The papers \cite{CL-2016} and \cite{HJN-2015} consider mean field models parameterized by different preferences and with major-minor players, respectively. Again, no liquidation constraint is allowed. The model introduced in \cite{CL-2016} is extended to portfolios of correlated assets in \cite{lehalle:mouz:19} where the effect of trading flows on naive estimates of intraday volatility and correlations is analyzed.

The remainder of the paper is organized as follows. In Section \ref{sec:results} we state and prove our existence and uniqueness of solutions result for the MFG \eqref{MFG-PVE} and establish additional results on the equilibrium trading strategies and portfolio processes if all the players share the same information. In Section 3 we prove that the solution to the MFG yields an $\epsilon$-Nash equilibrium in the $N$-player game. In Section \ref{sec:convergence} we prove that the MFG with singular terminal condition can be approximated by MFGs that penalize open positions at the terminal time under additional assumptions on the market impact term.

\subsubsection{Notation and notational conventions}
Throughout, we adopt the convention that $C$ denotes a constant which may vary from line to line. Moreover, for a filtration $\bG$, $\prog(\bG)$ denotes the sigma-field of progressive subsets of $[0,T]\times \Omega$ and for $\mathbb I$, which could be a subset of $\mathbb R^n$, $n\geq 1$ or $\mathbb R\cup\{+\infty\}$, we consider the set of progressively measurable processes w.r.t. $\bG$:
$$\cP_\bG ([0,T]\times\Omega;\mathbb{I}) = \left\{u:[0,T]\times\Omega\rightarrow\mathbb{I}~\left| \ u \ \textrm{is } \prog(\bG)-\textrm{measurable}\right.\right\}.$$
We define the following subspaces of $\cP_\bG ([0,T]\times\Omega;\mathbb{I})$:
\begin{align*}
    L^{\infty}_{\bG}([0,T]\times\Omega;\mathbb{I})  =& \left\{ u \in \cP_\bG ([0,T]\times\Omega;\mathbb{I}) ; \ \|u\|:=\esssup_{t,\omega}|u(t,\omega)|<\infty \right\} \ ;  \\
       L^{p}_{\bG}([0,T]\times\Omega;\mathbb{I})  =& \left\{u \in \cP_\bG ([0,T]\times\Omega;\mathbb{I}); \ \bE \left( \int_0^T|u(t,\omega)|^2 dt\right)^{p/2}<\infty \right\};\\
       S^p_{\mathbb G}([0,T]\times\Omega;\mathbb I)=& \left\{u \in \mathcal P_{\mathbb G} ([0,T]\times\Omega;\mathbb{I}); \ \mathbb E \left( \sup_{0\leq t\leq T}|u(t,\omega)|^p \right)<\infty \right\}.
\end{align*}

Whenever the notation $T-$ appears in the definition of a function space we mean the set of all functions whose restriction satisfy the respective property on $[0,\tau]$ for any $\tau<T$, e.g., by $\psi\in L^2([0,T-]\times\Omega;\mathbb I)$, we mean $\psi\in L^2([0,\tau]\times\Omega;\mathbb I)$ for any $\tau<T$. For notational convenience, we put
\[
    D^2_\bG([0,T]\times \Omega;\mathbb I):=L^2_{\bG}([0,T]\times \Omega;\mathbb I) \cap S^2_{\bG}([0,T-]\times \Omega;\mathbb I).
\]
For a positive stochastic process $u \in L^{\infty}_{\bG}([0,T]\times\Omega;[0,\infty))$ we denote its upper and lower bound by {$u_{\max}$ and $u_{\min}$, respectively.

\section{The mean-field game}\label{sec:results}

In this section, we state and prove an existence and uniqueness of solutions result for the MFG \eqref{MFG-PVE}. The set of admissible controls for the representative player's liquidation problem is given by
\[
    \cA_\bF(\cX) := \left\{ \xi \in L^2_\bF([0,T]\times \Omega;\mathbb R), \ \int_{0}^T \xi_s \,ds = \cX \mbox{ a.s. }\right\}.
\]
For a given process $\mu\in L^2_{\bF^0}([0,T]\times\Omega;\bR)$, the corresponding cost and value functions are given by
$$J(\mathcal X,\xi;\mu) :=\bE \left[\left. \int_0^T\left( \kappa_sX_s\mu_s+\eta_s\xi^2_s+\lambda_s X_s^2\right)\,ds\right|\mathcal X\right],$$
and
\[
    V(\mathcal X;\mu)=\inf_{\xi\in\cA_{\bF}(\mathcal X)}J(\mathcal X,\xi;\mu),
\]
respectively. The Hamiltonian is
\[
   H(t,\xi,x,y;\mu)=-\xi y+\kappa_t\mu x+\eta_t \xi^2+\lambda_t x^2,
\]
and the stochastic maximum principle suggests that the solution to the optimization problem can be characterised in terms of the FBSDE
\begin{equation}\label{optimality-system-PVE}
\left\{\begin{aligned}
dX_t =&-\xi_t\,dt,\\
-dY_t =&\left(\kappa_t\mu_t+2\lambda_tX_t\right)\,dt-Z_t\,d\widetilde W_t,\\
X_0 =&\mathcal X\\
X_T = &0,
\end{aligned}
\right.
\end{equation}
where $\widetilde W=(W^0,W)$ is a $m$-dimensional Brownian motion. The process $Y$ is called the adjoint process to the controlled state process $X$. The liquidation constraint $X_T=0$ results in a singularity of the value function at liquidation time; see \cite{AJK-2014,GHS-2013}. As a result, the terminal condition for $Y$ cannot be determined a priori. In particular, the first equation holds on $[0,T]$ while the second equation holds on $[0,T)$. A standard approach yields the candidate optimal control
\begin{equation}\label{optimal-candidate-control-PVE}
    \xi^*_t=\frac{Y_t}{2\eta_t}.
\end{equation}
Taking the equilibrium condition into account suggests that the analysis of the MFG reduces to the analysis of the following conditional mean-field type FBSDE:
\begin{equation}\label{conditional-MF-FBSDE}
\left\{\begin{aligned}
dX_t =&-\dfrac{Y_t}{2\eta_t}\,dt,\\
-dY_t =&\left(\kappa_t\bE\left[\left.\frac{Y_t}{2\eta_t}\right|\cF^0_t\right]+2\lambda_tX_t\right)\,dt-Z_t\,d\widetilde{W}_t,\\
X_0 =&\mathcal X\\
X_T = &0.
\end{aligned}
\right.
\end{equation}
We establish the existence and uniqueness of a solution to the preceding FBSDE in the following space of weighted stochastic processes.

\begin{definition} \label{def:space_H_M}
For $l\in\mathbb R$, we introduce the space
\[
    \mathcal H_l := \{Y\in \cP_{\bF}([0,T]\times \Omega;\mathbb R\cup\{\infty\}): \ (T-.)^{-l} Y_\cdot \in S^2_\bF([0,T]\times \Omega;\mathbb R\cup\{\infty\}) \},
\]
which is endowed with the norm 
\[
	\|Y\|_{\mathcal H_l}:=\|Y\|_{l}:=\left(\bE\left[\sup_{0\leq t\leq T} \left|\frac{Y_t}{(T-t)^{l}}\right|^2\right]\right)^{\frac{1}{2}},
\]
and the space
\[
    \mathcal M_l:=\{Y\in \cP_{\bF}([0,T]\times \Omega;\mathbb R\cup\{\infty\}): \ (T-.)^{-l} Y_\cdot \in L^\infty_\bF([0,T]\times \Omega;\mathbb R\cup\{\infty\}) \},
\]
which is endowed with the norm
\[
    \|Y\|_{\mathcal M_l}:=\esssup_{(t,\omega)\in[0,T]\times\Omega}\frac{|Y_t|}{(T-t)^l}.
\]
\end{definition}
\begin{Fact} \label{fact_H_alpha}
The following facts are readily verified:
\begin{itemize}
\item For any $l$, $\mathcal H_{l}\subset \mathcal H_{-1+l}$ with $\|\cdot\|_{\mathcal H_{-1+l}}\leq T\|\cdot\|_{\mathcal H_l}$.
\item If $K \in \cH_l$, with $l > 0$, then $K_T = 0$ a.s.
\item If $K_1 \in \cM_{-1}$ and $K_2\in \cH_{l}$, then $K_1K_2 \in \cH_{-1+l}$.
\end{itemize}
The first two properties also hold for the space $\cM_l$.
\end{Fact}

We assume throughout that the cost coefficients are bounded and that the dependence of an individual player's cost function on the average action is weak enough. The weak interaction condition is consistent with the game theory literature on mean-field type games where some form of  moderate dependence condition is usually required to prove the existence and uniqueness of Nash equilibria; see \cite{H-2005} and references therein. The condition is also consistent with the monotonicity condition for FBSDE systems originally proposed by \cite{hu:95,PW-1999} and  
the generalization to mean-field type FBSDEs established in \cite{BensoussanYamZhang15}.  Specifically, we assume that the following condition is  satisfied.

\begin{ass}\label{ass-PVE}
$\ $
\begin{itemize}
\item[i)] The processes $\kappa$, $\lambda$, $1/\lambda$, $\eta$ and $1/\eta$ belong to $L^{\infty}_{\bF}([0,T]\times\Omega;[0,\infty))$ and $\mathcal X\in L^2(\Omega)$ is independent of $W$ and $W^0$.
\item[ii)] There exists a constant $\theta>0$ such that
\begin{equation} \label{ineq-PVE}
 \frac{\kappamax}{4\etamin}< \theta< 4 \frac{\lambdamin}{\kappamax}.
\end{equation}
\end{itemize}
\end{ass}

{{The following quantity will be important in our subsequent analysis:}}
\begin{equation} \label{alpha}
	\alpha:=\etamin/\etamax\in(0,1].
\end{equation}

We are now ready to state our first major result.

\begin{theorem}\label{main-result}
Under Assumption \ref{ass-PVE}, there exists a unique solution
\[
	(X,Y,Z)\in\mathcal H_\alpha \times L^2_{\bF}([0,T]\times\Omega;\mathbb R)\times  L^2_{\bF}([0,T-]\times\Omega;\mathbb R^m)
\]	
to the FBSDE \eqref{conditional-MF-FBSDE}. Moreover, the process $\xi^{*} = \dfrac{Y}{2\eta}$ is an optimal control for the representative player, $X^*=X$ is the optimal state process and the aggregation effect given by 
\begin{equation} \label{eq:consistency_condition}
	\mu^*_t=\bE\left[\left. \frac{Y_t}{2 \eta_t}\right|\mathcal F^0_t\right], \quad t \in [0,T)
\end{equation}
is the unique solution to the MFG \eqref{MFG-PVE}. Finally, the value function is given by
\begin{equation}\label{value-function-PVE}
    V(\mathcal X;\mu^*)=\frac{1}{2}A_{0} {\mathcal X}^2+\frac{1}{2}B_{0} \mathcal X + \frac{1}{2} \bE \left[ \int_0^{T} \kappa_sX^*_s\mu^*_s\,ds\bigg|\mathcal X\right].
\end{equation}

\end{theorem}
{Section \ref{existence-verification} is devoted to the proof of Theorem \ref{main-result} and Section \ref{sec-CVE} explores some particular cases. }


\subsection{General existence and uniqueness of solutions}\label{existence-verification}

In this section we prove our existence and uniqueness of equilibrium result for the MFG \eqref{MFG-PVE}. Decoupling the FBSDE \eqref{conditional-MF-FBSDE} by $Y=AX+B$ yields the following system of Riccati type equations:
\begin{equation}\label{Riccati-system-PVE}
\left\{\begin{aligned}
    -dA_t =&\left(2\lambda_t-\frac{A^2_t}{2\eta_t}\right)\,dt-Z^A_t\,d\widetilde{W}_t,\\
    -dB_t =&\left(\kappa_t \bE\left[\left.\frac{1}{2\eta_t}\left(A_tX_t+B_t\right)\right|\cF^0_t\right]-\frac{A_tB_t}{2\eta_t}\right)\,dt-Z^B_t\,d\widetilde{W}_t,\\
    A_T =& \infty \\
    B_T =& 0.
\end{aligned}\right.
\end{equation}
The existence of a unique solution $A \in \cM_{-1}$ to the first equation is established in Lemma \ref{existence-asymptotic-expansion-A} in the appendix. Namely, there exists a unique process $(A,Z^A)$ such that $A  \in \cM_{-1}$, $Z^A \in L^2_{\bF}([0,T-]\times\Omega;\bR^m)$, the dynamics is given on any interval $[0,\tau]$, $\tau < T$ by the first equation of \eqref{Riccati-system-PVE} and $\displaystyle \lim_{t\to T} A_t = +\infty = A_T$. Moreover $A$ satisfies the a priori estimate \eqref{asymptotic-expansion-A} in Lemma \ref{existence-asymptotic-expansion-A} in the appendix, from which it also follows that
\begin{equation} \label{expA}
	\exp\left(-\int_r^s \frac{A_u}{2\eta_u} du \right) \leq  \left( \frac{T-s}{T-r} \right)^{\alpha}
\end{equation}
for any $0\leq r\leq s < T$, where $\alpha$ is given by \eqref{alpha}. Hence we need to solve the following FBSDE:
\begin{equation}\label{conditional-MF-FBSDE_2}
\left\{\begin{aligned}
dX_t =&-\frac{1}{2\eta_t} (A_t X_t + B_t)\,dt,\\
    -dB_t =&\left(\kappa_t \bE\left[\left.\frac{1}{2\eta_t}\left(A_tX_t+B_t\right)\right|\cF^0_t\right]-\frac{A_tB_t}{2\eta_t}\right)\,dt-Z^B_t\,d\widetilde{W}_t,\\
    X_0 =&\mathcal X \\
B_T =& 0.
\end{aligned}
\right.
\end{equation}
Our approach is based on an extension of the method of continuation that accounts for the singularity of the process $A$ at the terminal time and hence for the singularity in the driver of the FBSDE. We apply the method of continuation to the triple $(X,B,Y=AX+B)$ rather than the pair $(X,B)$, and search for solutions
\[
	(X,B,Y=AX+B)\in\mathcal H_\alpha \times\mathcal H_\gamma\times L^2_{\bF}([0,T]\times\Omega;\mathbb R),
\]	
where $\alpha$ was defined in (\ref{alpha}) and $\gamma$ is any constant $$0<\gamma<\alpha\wedge1/2.$$ Specifically, the method of continuation will be applied to the FBSDE
\begin{equation}\label{conditional-MF-FBSDE-3}
\left\{\begin{aligned}
dX_t =&-\frac{1}{2\eta_t} (A_t X_t + B_t)\,dt,\\
    -dB_t =&\left(\kappa_t \fp \bE\left[\left.\frac{1}{2\eta_t}\left(A_tX_t+B_t\right)\right|\cF^0_t\right]+f_t-\frac{A_tB_t}{2\eta_t}\right)\,dt-Z^B_t\,d\widetilde{W}_t,\\
dY_t=&\left(-2\lambda_tX_t-\kappa_t\fp \bE\left[\left.\frac{A_tX_t+B_t}{2\eta_t}\right|\mathcal F^0_t\right]-f_t\right)\,dt+Z^Y_t\,d\widetilde W_t,\\
X_0 =&\mathcal X \\
B_T =& 0,
\end{aligned}
\right.
\end{equation}
where $\fp\in[0,1]$, $f\in L^2_{\bF}([0,T]\times\Omega;\mathbb R)$. {We emphasise that the first two equations hold on $[0,T]$, while the third equation holds on $[0,T)$.}

In a first step, we provide an a priori estimate for the processes $Z^B$ and $Z^Y$.

\begin{lemma} \label{lem:BMO_martingale}
Assume that $f \in L^2_{\mathbb F}([0,T]\times\Omega;\mathbb R)$ and that
there exists a solution $(X,B,Y,Z^B,Z^Y)$ to \eqref{conditional-MF-FBSDE-3}  such that
\[
	(X,B,Y)\in \mathcal H_\alpha \times\mathcal H_\gamma \times {S^2_{\bF}([0,T-]\times \Omega,\bR)}. 		
\]	
Then
\[
	(Z^B,Z^Y)\in L^2_{\bF}([0,T]\times\Omega;\mathbb R^m)\times {L^2_{\bF}([0,T-]\times\Omega;\mathbb R^m)} 	
\]
	and there exists a constant $C > 0$ such that
$$\bE\left[\int_0^T|Z^B_t|^2\,dt\right] \leq C \left(\|B\|^2_\gamma + \|X\|^2_\alpha+\bE\left[\int_0^T|f_t|^2\,dt\right]\right)$$
and such that for each $\tau <T$
\begin{equation*}
\begin{split}
    \bE\left[\int_0^{\tau}|Z_s^Y|^2\,ds\right]
    \leq C\left(\bE\left[\sup_{0\leq t\leq\tau}|Y_t|^2\right]+\|X\|^2_{\alpha}+\|B\|^2_\gamma+\bE\left[\int_0^{{T}} |f_t|^2\,dt\right]\right).
\end{split}
\end{equation*}
In particular, $\int_0^\cdot Z^B_s\,d\widetilde W_s$ is a true martingale on $[0,T]$ and $\int_0^\cdot Z^Y_s\,d\widetilde W_s$ is a true martingale on $[0,\tau]$, for each $\tau<T$.
\end{lemma}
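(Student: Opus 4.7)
My plan is to apply Itô's formula to $|B_t|^2$ and $|Y_t|^2$ on an appropriate stopping-time localization, and then exploit the weighted-norm spaces $\mathcal H_\alpha,\mathcal H_\gamma$ together with the bound $A\in\mathcal M_{-1}$ from Lemma \ref{existence-asymptotic-expansion-A} to control the singular contributions in the drivers. The constraint $\gamma\in(0,\alpha\wedge 1/2)$ is precisely tuned so that the exponents $2\gamma-1$ and $\gamma+\alpha-1$ arising in the estimates exceed $-1$, rendering the time integrals finite; the condition $B_T=0$ from Fact \ref{fact_H_alpha} is what makes the boundary term at $T$ well-behaved.

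For the first estimate, I introduce the localizing sequence $\tau_n:=\inf\{t\in[0,T]:\int_0^t|Z^B_s|^2\,ds\geq n\}\wedge T$. Itô applied to $|B_t|^2$ on $[0,\tau_n]$, followed by taking expectations to kill the local martingale, yields
\[
\bE\int_0^{\tau_n}|Z^B_s|^2\,ds = \bE[B_{\tau_n}^2]-B_0^2+2\bE\int_0^{\tau_n} B_s\Bigl(\kappa_s\fp\bE\bigl[\tfrac{A_sX_s+B_s}{2\eta_s}\big|\cF^0_s\bigr]+f_s-\tfrac{A_sB_s}{2\eta_s}\Bigr)ds.
\]
The critical singular contribution $\bE\int B_s^2 A_s/(2\eta_s)\,ds$ is bounded via $|A_s|\leq\|A\|_{\mathcal M_{-1}}/(T-s)$ by $C\|B\|_\gamma^2\int_0^T(T-s)^{2\gamma-1}\,ds<\infty$; the mixed term $\bE\int|B_sA_sX_s|/(2\eta_s)\,ds$ is treated by Cauchy--Schwarz, giving $C\|B\|_\gamma\|X\|_\alpha\int_0^T(T-s)^{\gamma+\alpha-1}\,ds$, which is absorbed into $\|B\|_\gamma^2+\|X\|_\alpha^2$ by Young's inequality. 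The conditional-expectation term is handled by conditional Jensen together with the same pointwise bound on $A$, and the $B_sf_s$ term by Young's. The boundary term $\bE[B_{\tau_n}^2]$ is dominated uniformly by $T^{2\gamma}\|B\|_\gamma^2$, so passing $n\to\infty$ via monotone convergence yields the claimed bound; square-integrability of $Z^B$ then gives the martingale property on $[0,T]$.

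For the second estimate, fix $\tau<T$ and set $\sigma_n:=\inf\{t\in[0,\tau]:\int_0^t|Z^Y_s|^2\,ds\geq n\}\wedge\tau$. Since $|A_s|\leq\|A\|_{\mathcal M_{-1}}/(T-\tau)$ on $[0,\tau]$, the driver $g_s:=2\lambda_sX_s+\kappa_s\fp\bE[\tfrac{A_sX_s+B_s}{2\eta_s}|\cF^0_s]+f_s$ satisfies $\bE\int_0^\tau g_s^2\,ds\leq C(\|X\|_\alpha^2+\|B\|_\gamma^2+\bE\int_0^T|f_s|^2\,ds)$ with a constant depending on $T-\tau$. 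Itô's formula for $|Y_t|^2$ on $[0,\sigma_n]$ gives
\[
\int_0^{\sigma_n}|Z^Y_s|^2\,ds = Y_{\sigma_n}^2-Y_0^2+2\int_0^{\sigma_n}Y_sg_s\,ds-2\int_0^{\sigma_n}Y_sZ^Y_s\,d\widetilde W_s;
\]
squaring both sides, taking expectation, and controlling the martingale term via Itô isometry through $\bE[\int_0^{\sigma_n}Y_s^2|Z^Y_s|^2\,ds]\leq \bE[\sup_{t\leq\tau}Y_t^2\cdot\int_0^{\sigma_n}|Z^Y_s|^2\,ds]$, a Young's inequality $ab\leq\epsilon a^2+C_\epsilon b^2$ with $a=\int|Z^Y|^2$ absorbs the resulting $\epsilon\bE[(\int|Z^Y|^2)^2]$ term into the left-hand side, leaving the required estimate in terms of $\bE[\sup_{t\leq\tau}Y_t^2]$ together with the $g$-contributions. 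Sending $n\to\infty$ by monotone convergence and using $Z^Y\in L^2([0,\tau]\times\Omega)$ yields the martingale property on $[0,\tau]$.

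The main obstacle is the singular term $A_sB_s/(2\eta_s)$ in the driver of the $B$-equation: its $1/(T-s)$ blow-up must be balanced exactly by the vanishing rate of $B$ at $T$ encoded in the weighted norm $\|\cdot\|_\gamma$, with the analogous condition $\gamma+\alpha>0$ required to make the mixed $A_sX_s$ contribution integrable. The Young-type absorption step in the second-moment estimate for $Z^Y$ is a secondary technical point that requires tuning $\epsilon$ so that the reabsorbed coefficient is strictly less than one.
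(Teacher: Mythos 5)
Your argument for $Z^B$ is correct but follows a genuinely different route from the paper. You apply It\^o's formula to $|B_t|^2$ under a localization and absorb the singular drift contributions using the weighted norms; the paper instead writes $\int_t^T Z^B_s\,d\widetilde W_s = B_t+\int_t^T(\cdots)\,ds$ directly from the backward equation, bounds the right-hand side pathwise by $\sup_s|B_s|(T-s)^{-\gamma}$, $\sup_s|X_s|(T-s)^{-\alpha}$ and $\int_0^T|f_s|\,ds$ (the deterministic integrals $\int_t^T(T-s)^{\gamma-1}ds$ and $\int_t^T(T-s)^{\alpha-1}ds$ being finite), and then applies Doob's maximal inequality to $\sup_t|\int_t^TZ^B_s\,d\widetilde W_s|$. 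Both arguments rest on the same cancellation of the $1/(T-s)$ blow-up of $A$ against the decay exponents $\gamma,\alpha>0$; the paper's version avoids having to discuss the quadratic term $B_s^2A_s/(2\eta_s)$ and the vanishing expectation of $\int_0^{\tau_n}B_sZ^B_s\,d\widetilde W_s$, but your route is equally valid.

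The $Z^Y$ estimate is where your argument has a genuine gap. After squaring the It\^o identity you must control $\bE[\int_0^{\sigma_n}Y_s^2|Z^Y_s|^2\,ds]\le\bE[\sup_{t\le\tau}Y_t^2\cdot\int_0^{\sigma_n}|Z^Y_s|^2\,ds]$, and your Young inequality with $a=\int_0^{\sigma_n}|Z^Y_s|^2\,ds$ and $b=\sup_{t\le\tau}Y_t^2$ produces $\epsilon\,\bE[a^2]+C_\epsilon\,\bE\bigl[\sup_{t\le\tau}Y_t^4\bigr]$, \emph{not} a term in $\bE[\sup_{t\le\tau}Y_t^2]$. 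The hypothesis only gives $Y\in S^2_{\bF}([0,T-]\times\Omega;\bR)$, so the fourth moment is not available. The same problem recurs in the other squared terms: $\bE[Y_{\sigma_n}^4]$ and $\bE[(\int_0^{\sigma_n}Y_sg_s\,ds)^2]$ require fourth moments of $Y$, $X$, $B$ and $f$, whereas $f$ is only assumed to be in $L^2$. So the second-moment bound cannot be closed as you set it up.

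That said, the displayed inequality in the lemma is dimensionally inhomogeneous (a fourth-order quantity bounded by second-order ones) and is best read as a misprint: the paper's own proof only establishes $\bE[\sup_{0\le t\le\tau}|\int_t^{\tau}Z_s^Y\,d\widetilde W_s|^2]\le C(\cdots)$, i.e.\ the first moment $\bE[\int_0^\tau|Z^Y_s|^2\,ds]$, and this is all that is used for the martingale conclusion. Your It\^o computation delivers exactly that if you simply do not square: taking expectations in the localized identity gives $\bE[\int_0^{\sigma_n}|Z^Y_s|^2\,ds]=\bE[Y_{\sigma_n}^2]-Y_0^2+2\,\bE[\int_0^{\sigma_n}Y_sg_s\,ds]\le\bE[\sup_{t\le\tau}Y_t^2]+\tau\,\bE[\sup_{t\le\tau}Y_t^2]+\bE[\int_0^\tau g_s^2\,ds]$, and your bound $\bE[\int_0^\tau g_s^2\,ds]\le C_\tau(\|X\|_\alpha^2+\|B\|_\gamma^2+\bE[\int_0^T|f_s|^2\,ds])$ (valid since $A_s\le C/(T-\tau)$ on $[0,\tau]$) finishes the argument after letting $n\to\infty$.
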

\begin{proof}
Since $A\in\mathcal M_{-1}$ and $\etamin>0$ there exists a constant $C>0$ that is independent of $s \in [0,T]$ such that
\begin{equation*}
	\left| \frac{A_s B_s}{2\eta_s}- \kappa_s \bE\left[\left.\frac{A_s X_s+B_s}{2\eta_s} \right| \mathcal F^0_s \right]-f_s\right|
	\leq C\left[ \frac{|B_s|}{T-s} + \bE\left(\left.\frac{|X_s|}{T-s}+|B_s|\right|\mathcal F^0_s\right)+|f_s|\right] .
\end{equation*}
Let us notice that
$$ 	\int_t^T Z^B_s d\widetilde W_s
	 =  -B_t -\int_t^T \left\{\frac{A_s B_s}{2\eta_s}-\kappa_s \fp \bE \left[\left.\frac{A_s X_s+B_s}{2\eta_s} \right| \mathcal F^0_s \right]-f_s \right\}\,ds.$$
Since $(X,B)\in\mathcal H_{\alpha}\times\mathcal H_{\gamma}$, this implies
\begin{equation*}
    \begin{split}
 	\left| \int_t^T Z^B_s d\widetilde W_s \right|
		 \leq &~ C\sup_{0\leq t\leq T}\frac{|B_t|}{(T-t)^\gamma}+C\sup_{0\leq s\leq T}\bE\left[\left.\sup_{0\leq t\leq T}\frac{|X_t|}{(T-		t)^\alpha}\right|\mathcal F^0_s\right]\\
	&~ +C\sup_{0\leq s\leq T}\bE\left[\left.\sup_{0\leq t\leq T}\frac{|B_t|}{(T-t)^\gamma}\right|\mathcal F^0_s\right]+\int_0^T|f_t|\,dt.
    \end{split}
\end{equation*}
Thus, by Doob's maximal inequality,
\begin{equation*}
    \begin{split}
    \bE\left[\sup_{0\leq t\leq T}\left| \int_t^T Z^B_s d\widetilde W_s \right|^2\right]
 \leq  C \left(\|B\|^2_{\gamma}+\|X\|^2_{\alpha}+\bE\left[\int_0^T|f_t|^2\,dt\right]\right) .
    \end{split}
\end{equation*}
Similarly, for each $0<\tau<T$,
\begin{equation*}
\begin{split}
    \bE\left[\sup_{0\leq t\leq\tau}\left|\int_t^{\tau }Z_s^Y\,d\widetilde W_s\right|^2\right]
    \leq C\left(\bE\left[\sup_{0\leq t\leq\tau}|Y_t|^2\right]+\|X\|^2_{\alpha}+\|B\|^2_\gamma+\bE\left[\int_0^{T} |f_t|^2\,dt\right]\right)<\infty.
\end{split}
\end{equation*}
\end{proof}

In a second step, we now prove an existence of solutions result for the FBSDE \eqref{conditional-MF-FBSDE-3} {with $\fp=0$.}

\begin{lemma} \label{induction-basis-eta}
For $\fp=0$ there exists for every given data $f\in L^2_{\bF}([0,T]\times\Omega;\mathbb R)$ a unique solution $(X,B,Y,Z^B,Z^Y)\in\mathcal H_\alpha \times\mathcal H_\gamma\times D^2_{\bF}([0,T]\times \Omega;\mathbb R){\times L^2_{\mathbb F}([0,T]\times\Omega;\mathbb R^m)\times L^2_{\mathbb F}([0,T-]\times\Omega;\mathbb R^m)}$ to~\eqref{conditional-MF-FBSDE-3}. It is given by 
\begin{equation*}
\left\{\begin{aligned}
    B_t=&~\bE\left[\left.\int_t^Tf_se^{-\int_t^s (2\eta_r)^{-1}A_r\,dr}\,ds\right|\mathcal F_t\right], \quad t \in [0,T]\\
	X_t=&~\mathcal Xe^{-\int_0^t (2\eta_r)^{-1}A_r\,dr}-\int_0^t(2\eta_s)^{-1}B_se^{-\int_s^t (2\eta_r)^{-1} A_r\,dr}\,ds, \quad t \in [0,T] \\
	Y_t = & A_t X_t + B_t, \quad t \in [0,T),
\end{aligned}\right.
\end{equation*}
and {$Z^B \in L^2_{\bF}([0,T]\times \Omega;\mathbb R^m)$ and $Z^Y\in  L^2_{\bF}([0,T-]\times \Omega;\mathbb R^m)$} are given by the martingale representation theorem.
\end{lemma}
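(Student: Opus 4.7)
The starting observation is that with $\fp=0$ the system \eqref{conditional-MF-FBSDE-3} decouples triangularly: the backward equation for $B$ becomes a \emph{linear} BSDE with singular coefficient $A_t/(2\eta_t)$ that does not depend on $X$ or $Y$; once $B$ is known, the equation for $X$ is a pathwise linear ODE; and once $(X,B)$ are known, the ansatz $Y_t := A_tX_t+B_t$ on $[0,T)$ should recover the third equation by It\^o's formula. My plan is therefore to construct $B$, then $X$, then $Y$, verify the stated regularity, and finally argue uniqueness.

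\textbf{Step 1 (construction of $B$).} I would \emph{define} $B_t$ by the variation-of-constants formula stated in the lemma. Since $f\in L^2_\bF$ and, by Lemma \ref{existence-asymptotic-expansion-A} combined with \eqref{expA}, the exponential weight $\exp\bigl(-\int_t^s A_r/(2\eta_r)\,dr\bigr)\le((T-s)/(T-t))^\alpha$, the conditional expectation is well defined. To show $B\in\mathcal H_\gamma$, I would pull out $(T-t)^{-\gamma}$ and use Cauchy--Schwarz inside the conditional expectation: the deterministic integrand $\int_t^T(T-s)^{2\alpha}/(T-t)^{2\alpha+2\gamma}\,ds$ equals $(T-t)^{1-2\gamma}/(2\alpha+1)$, which is finite precisely because $\gamma<1/2$. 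Doob's maximal inequality then yields $\|B\|_\gamma^2\le C\,\bE\int_0^T|f_s|^2\,ds$, and $Z^B\in L^2_\bF$ together with the BSDE being satisfied with terminal value $0$ follow from the martingale representation theorem applied to $M_t:=\bE[\int_0^T f_s e^{-\int_0^s A/(2\eta)}\,ds\mid\mathcal F_t]$ in the standard way.

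\textbf{Step 2 (construction of $X$ and $Y$).} Given $B$, the process $X$ is defined by the explicit formula of the lemma, which is simply the solution to the affine ODE $dX_t=-(2\eta_t)^{-1}(A_tX_t+B_t)\,dt$ with $X_0=x$. Using \eqref{expA} a second time, the homogeneous part is bounded by $|x|((T-t)/T)^\alpha$, and for the inhomogeneous part I would use the pointwise bound $|B_s|\le (T-s)^\gamma\sup_u|B_u|/(T-u)^\gamma$ to reduce to the deterministic integral $\int_0^t(T-s)^{\gamma-\alpha}\,ds$, which is finite because $\alpha\le 1$ and $\gamma>0$. This gives $\|X\|_\alpha^2\le C(x^2+\|B\|_\gamma^2)$, so in particular $X\in\mathcal H_\alpha$ and $X_T=0$ by Fact~\ref{fact_H_alpha}. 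Finally, define $Y_t:=A_tX_t+B_t$ on $[0,T)$ and apply It\^o's product rule, using the equations for $A$, $B$ and the ODE for $X$: the cross term $-A_t^2X_t/(2\eta_t)$ coming from $dX$ cancels with $+A_t^2X_t/(2\eta_t)$ coming from $dA$, and the two $\pm A_tB_t/(2\eta_t)$ terms also cancel, leaving exactly $dY_t=(-2\lambda_tX_t-f_t)\,dt+(X_tZ^A_t+Z^B_t)\,d\widetilde W_t$, so the third equation holds with $Z^Y_t:=X_tZ^A_t+Z^B_t$. The membership $Y\in S^2_\bF([0,T-]\times\Omega;\bR)$ and $Z^Y\in L^2_\bF([0,T-])$ are then immediate on any $[0,\tau]$ with $\tau<T$, since on such an interval $A$ is uniformly bounded by $\|A\|_{\mathcal M_{-1}}/(T-\tau)$.

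\textbf{Step 3 (uniqueness).} Given any two solutions in the stated space, the $B$-component solves a linear BSDE with bounded (on $[0,T-\epsilon]$) coefficient, zero terminal data when compared, and the singular growth near $T$ is controlled by $\|B^1-B^2\|_\gamma<\infty$; applying a standard linear BSDE comparison on $[0,T-\epsilon]$ and sending $\epsilon\downarrow0$ forces $B^1=B^2$. Uniqueness of $X$ is then the uniqueness of a linear ODE, and uniqueness of $Y$ follows from the ansatz $Y=AX+B$ on $[0,T)$. The main technical issue throughout is not conceptual but the careful tracking of the singular weights: everywhere one has to check that $\gamma<\alpha\wedge 1/2$ makes the deterministic time integrals $\int_t^T(T-s)^{2\alpha-2\alpha-2\gamma}\,ds$ and $\int_0^t(T-s)^{\gamma-\alpha}\,ds$ converge, which is exactly where the constraint on $\gamma$ is used.
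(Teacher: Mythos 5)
Your Steps 1 and 2 follow essentially the paper's route: the triangular decoupling, the explicit formulas, the weighted estimates for $B$ and $X$ (your Cauchy--Schwarz argument with the retained exponential weight is a harmless variant of the paper's H\"older/Doob/Jensen estimate, and both hinge on $\gamma<\tfrac12$ and $\gamma-\alpha>-1$), and the It\^o verification that $Y=AX+B$ solves the third equation with $Z^Y=XZ^A+Z^B$ are all correct. The uniqueness argument is also fine, since for $\fp=0$ the system is linear and the solution formulas force uniqueness.

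However, there is a genuine gap at the end of Step 2. The lemma asserts $Y\in D^2_\bF([0,T]\times\Omega;\bR)=L^2_\bF([0,T]\times\Omega;\bR)\cap S^2_\bF([0,T-]\times\Omega;\bR)$, and you only establish the second, local, half (``immediate on any $[0,\tau]$ with $\tau<T$''). The global integrability $\bE\int_0^T Y_t^2\,dt<\infty$ is \emph{not} immediate: the pointwise bounds $A\in\cM_{-1}$, $X\in\cH_\alpha$, $B\in\cH_\gamma$ only give $|Y_t|\le C(T-t)^{\alpha-1}+|B_t|$, and $(T-t)^{2(\alpha-1)}$ fails to be integrable near $T$ whenever $\alpha\le\tfrac12$, which is permitted since $\alpha=\etamin/\etamax$ may be arbitrarily small. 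The paper closes this with an energy identity: integrating by parts on $[0,T-\eps]$ gives
\begin{equation*}
X_{T-\eps}Y_{T-\eps}-X_0Y_0=-\int_0^{T-\eps}X_t(2\lambda_tX_t+f_t)\,dt-\int_0^{T-\eps}\frac{Y_t^2}{2\eta_t}\,dt+\int_0^{T-\eps}X_tZ^Y_t\,d\widetilde W_t,
\end{equation*}
then using $A\ge0$ to bound $X_{T-\eps}Y_{T-\eps}\ge X_{T-\eps}B_{T-\eps}$, taking expectations, and letting $\eps\to0$ with $X_T=B_T=0$ to obtain $\bE\int_0^T\frac{Y_t^2}{2\eta_t}\,dt\le \bE[X_0Y_0]-\bE\int_0^T(2\lambda_tX_t^2+X_tf_t)\,dt<\infty$. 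This step is not cosmetic: the $L^2([0,T])$ bound on $Y$ is exactly the norm in which the continuation map of Lemma \ref{induction-step-eta} is shown to be a contraction, and it is what makes the candidate control $\xi^*=Y/(2\eta)$ admissible. You should add this argument (or an equivalent one) to complete the proof.
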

\begin{proof}
For $\fp=0$ the process $X$ solves a linear ODE and the pair $(B,Z^B)$ solves a linear BSDE. Hence, the explicit representations follow from the respective solution formulas. It remains to establish the desired integration properties. To this end, let us recall that $A$ has positive values. Thus we first apply H\"older's inequality in order to obtain,
\begin{equation*}
\begin{split}
    \frac{|B_t|}{(T-t)^\gamma}\leq&~ \frac{1}{(T-t)^\gamma}\bE\left[\left.\int_t^T|f_s|\,ds\right|\mathcal F_t\right]
    \le \left(\bE\left[\left.\int_t^T |f_s|^{\frac{1}{1-\gamma}}\,ds\right|\mathcal F_t\right]\right)^{1-\gamma} < \infty.
\end{split}
\end{equation*}
Using Doob's maximal inequality, Jensen's inequality and the fact that $\gamma < \frac{1}{2}$ we conclude that,
\begin{equation*}
\begin{split}
     \bE\left[\sup_{0\leq t\leq T}\left|\frac{B_t}{(T-t)^\gamma}\right|^2\right]\leq&~ \bE\left[\sup_{0\leq t\leq T}\left(\bE\left[\int_0^T |f_s|^{\frac{1}{1-\gamma}}\,ds\bigg|\mathcal F_t\right]\right)^{2(1-\gamma)}\right]
     \leq C\bE\left[\int_0^T|f_s|^2\,ds\right].
\end{split}
\end{equation*}

From (\ref{expA}), the solution formula for $X$ and using that $\gamma < \alpha$ we obtain that $X\in\mathcal H_\alpha$ because
\begin{equation*}
\begin{split}
 |X_t| \leq &~ \frac{|\cX|(T-t)^\alpha}{T^\alpha}+C\int_0^t |B_s| \left(\frac{T-t}{T-s}\right)^\alpha\,ds \\
 \leq &~  \frac{|\cX|(T-t)^{\alpha}}{T^\alpha}+C \left(  \sup_{0\leq s \leq T} \frac{|B_s|}{(T-s)^\gamma} \right) \left( \int_0^t \left(T-s\right)^{\gamma-\alpha} \,ds \right) (T-t)^\alpha \\
 \leq &~  (T-t)^{\alpha} \left\{ \frac{|\cX|}{T^\alpha}+\frac{CT^{1+\gamma-\alpha}}{1+\gamma-\alpha} \left(  \sup_{0\leq s \leq T} \frac{|B_s|}{(T-s)^\gamma} \right) \right\}.
\end{split}
\end{equation*}
In view of \eqref{asymptotic-expansion-A} and the previously established properties of $X$ and $B$ we have $Y \in {S^2_{\bF}([0,T-]\times \Omega; \bR)}$ with
\begin{equation} \label{estY}
\bE \left[\sup_{t\in [0,\tau]} Y^2_t \right] \leq \frac{C}{(T-\tau)^{2(1-\alpha)}} \| X\|_{\alpha}^2 + (T-\tau)^{2\gamma} \|B\|^2_\gamma.
\end{equation}
For any $\epsilon>0$, integration by part implies that
\begin{equation*}
\begin{split}
    X_{T-\epsilon}Y_{T-\epsilon}{-X_0Y_0}=&~\int_0^{T-\epsilon}X_t\,dY_t+\int_0^{T-\epsilon}Y_t\,dX_t\\
    =&~-\int_0^{T-\epsilon}X_t(2\lambda_tX_t+f_t)\,dt-\int_0^{T-\epsilon}\frac{Y_t^2}{2\eta_t}\,dt+\int_0^{T-\epsilon}X_tZ^Y_t\,d\widetilde W_t.
\end{split}
\end{equation*}
The positivity of the process $A$ along with the definition of the process $Y$ yields
$
    X_{T-\epsilon}Y_{T-\epsilon}
    \geq X_{T-\epsilon}B_{T-\epsilon}.
$
Thus, taking expectations on both sides of the above equation, letting $\epsilon\rightarrow 0$ and using ${X_{T}=B_T=0}$ yields
\begin{equation*}
\begin{split}
    {-\bE\left[X_0Y_0\right]}\leq-\bE\left[\int_0^T2\lambda_tX_t^2\,dt\right]-\bE\left[\int_0^TX_tf_t\,dt\right]-\bE\left[\int_0^T\frac{Y_t^2}{2\eta_t}\,dt\right].
\end{split}
\end{equation*}
Together with the inequality \eqref{estY} for $\tau = 0$ this shows that
\[
    \bE\left[\int_0^TY_t^2\,dt\right]\leq C\bE\left[\int_0^TX_t^2\,dt\right]+C\bE\left[\int_0^Tf_t^2\,dt\right] + C\|X\|_\alpha^2 + C \|B\|_\gamma^2<\infty.
\]
\end{proof}

In a third step we now establish the continuation result for the FBSDE \eqref{conditional-MF-FBSDE-3} from which we shall then deduce the existence of a unique global solution to our original MFG.

\begin{lemma} \label{induction-step-eta}
If for some $\fp \in[0,1]$ the FBSDE \eqref{conditional-MF-FBSDE-3} is for every data $f\in L^2_{\bF}([0,T]\times\Omega;\mathbb R)$ uniquely solvable in $\cH_\alpha \times\cH_\gamma\times {D^2_{\bF}([0,T]\times \Omega;\mathbb R)}\times L^2_{\bF}([0,T]\times \Omega;\mathbb R^m)\times L^2_{\bF}([0,T-]\times \Omega;\mathbb R^m)$,  then this holds also for $\fp+\fd$ with $\fd>0$ small enough (independent of $\fp$ and $f$).
\end{lemma}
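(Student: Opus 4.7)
The plan is to apply the classical method of continuation. Given solvability at level $\fp$, I define a Picard iteration by letting $(x^{k+1},b^{k+1},y^{k+1},z^{B,k+1},z^{Y,k+1})$ be the unique solution of \eqref{conditional-MF-FBSDE-3} at level $\fp$ with modified data
\[
	\tilde f^{k+1}_t \;:=\; f_t \;+\; \fd\,\kappa_t\,\bE\!\left[\left.\frac{A_t x^k_t + b^k_t}{2\eta_t}\,\right|\cF^0_t\right],
\]
starting from $(x^0,b^0,y^0)\equiv 0$. A fixed point of this map is precisely a solution of \eqref{conditional-MF-FBSDE-3} at level $\fp+\fd$. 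That $\tilde f^{k+1}\in L^2_{\bF}$ follows from $A\in\cM_{-1}$, boundedness of $\kappa$ and $1/\eta$, and the embedding $\cH_\alpha\subset\cH_{-1+\alpha}$ (Fact~\ref{fact_H_alpha}), so the inductive hypothesis applies at each step.

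The core step is an a priori Lipschitz estimate for the solution map at level $\fp$, with a constant independent of $\fp\in[0,1]$. For two solutions of \eqref{conditional-MF-FBSDE-3} at level $\fp$ with data $f^1,f^2$, set $\delta X=X^1-X^2$, etc., and $\delta\xi_t=\delta Y_t/(2\eta_t)$. Applying Itô's formula to $\delta X_t\,\delta Y_t$ on $[0,T-\varepsilon]$ and using $\delta X_0=0$ yields
\[
	\bE[\delta X_{T-\varepsilon}\,\delta Y_{T-\varepsilon}]\;=\;-\bE\!\int_0^{T-\varepsilon}\!\bigl\{2\lambda_t(\delta X_t)^2+\fp\kappa_t\delta X_t\bE[\delta\xi_t|\cF^0_t]+2\eta_t(\delta\xi_t)^2+\delta X_t\delta f_t\bigr\}\,dt.
\]
The decomposition $\delta Y=A\delta X+\delta B$ and positivity of $A$ give $\bE[\delta X_{T-\varepsilon}\,\delta Y_{T-\varepsilon}]\ge\bE[\delta X_{T-\varepsilon}\,\delta B_{T-\varepsilon}]$, and the $\cH_\alpha\times\cH_\gamma$ regularity forces the latter to be $O(\varepsilon^{\alpha+\gamma})\to 0$. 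Passing $\varepsilon\downarrow 0$, I would then bound the mean-field cross term by Young's inequality with the parameter $\theta$ from Assumption~\ref{ass-PVE}, combined with conditional Jensen ($\bE[\delta\xi|\cF^0]^2\le\bE[(\delta\xi)^2|\cF^0]$); condition \eqref{ineq-PVE} then keeps the resulting quadratic form strictly coercive, and one more Young step absorbs $\delta X\cdot\delta f$, producing the $L^2$ estimate
\[
	\bE\!\int_0^T\!\bigl[(\delta X_t)^2+(\delta\xi_t)^2\bigr]\,dt \;\le\; C\,\bE\!\int_0^T(\delta f_t)^2\,dt
\]
with $C$ depending only on the bounds in Assumption~\ref{ass-PVE}. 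Substituting this into the explicit linear-BSDE representation for $\delta B$ as in the proof of Lemma~\ref{induction-basis-eta} (using $\gamma<1/2$ for the Doob--Jensen step and $\gamma<\alpha$ for integrability of $(T-s)^{\gamma-\alpha}$) upgrades the bound to $\|\delta X\|_\alpha^2+\|\delta B\|_\gamma^2\le C\,\bE\!\int(\delta f_t)^2\,dt$; Lemma~\ref{lem:BMO_martingale} then controls $(\delta Z^B,\delta Z^Y)$.

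Applied to successive iterates, and noting $\tilde f^{k+1}-\tilde f^{k}=\fd\,\kappa\,\bE[\delta\xi^k|\cF^0]$, this yields
\[
	\|\delta X^{k+1}\|_\alpha^2+\|\delta B^{k+1}\|_\gamma^2\;\le\;C'\fd^2\bigl(\|\delta X^k\|_\alpha^2+\|\delta B^k\|_\gamma^2\bigr),
\]
with $C'$ independent of $\fp\in[0,1]$ and of $f$. Choosing $\fd>0$ with $C'\fd^2<1$ turns the iteration into a contraction on $\cH_\alpha\times\cH_\gamma$, whose unique fixed point provides the desired solution at level $\fp+\fd$; the bridge process $y^{k+1}=A\,x^{k+1}+b^{k+1}$ converges in $D^2_\bF$ alongside, and $(z^B,z^Y)$ are recovered from martingale representation. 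The main obstacle is the a priori estimate in the presence of the singular coefficient $A/(2\eta)$: Itô's formula cannot be applied to $\delta X\,\delta Y$ directly on $[0,T]$, so the boundary term at $T-\varepsilon$ has to be killed via the precise weighted regularity of $X$ and $B$, while the monotonicity \eqref{ineq-PVE}—with its free parameter $\theta$—is what keeps the resulting quadratic form coercive uniformly in $\fp$ after the Young splitting of the mean-field coupling.
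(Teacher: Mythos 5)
Your proof follows essentially the same route as the paper's: freeze the new $\fd$-weighted mean-field term at the previous iterate, solve at level $\fp$ by the inductive hypothesis, and extract a contraction from the integration-by-parts identity for $\delta X\,\delta Y$ combined with the monotonicity condition \eqref{ineq-PVE}; the boundary term is handled exactly as in the paper via $\delta X_{T-\eps}\delta Y_{T-\eps}\geq \delta X_{T-\eps}\delta B_{T-\eps}$ and the $\cH_\alpha\times\cH_\gamma$ regularity, and the upgrade to the weighted norms through the linear-BSDE formula for $\delta B$ and the ODE for $\delta X$ is also the paper's argument. The only structural difference is presentational: you package the key estimate as Lipschitz stability of the level-$\fp$ solution map with respect to the data $f$ and then specialize to Picard differences, whereas the paper estimates two iterates directly; these are equivalent.

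One step should be repaired. The displayed recursion $\|\delta X^{k+1}\|_\alpha^2+\|\delta B^{k+1}\|_\gamma^2\le C'\fd^2\bigl(\|\delta X^k\|_\alpha^2+\|\delta B^k\|_\gamma^2\bigr)$ does not close as stated: bounding $\bE\int_0^T(\tilde f^{k+1}_t-\tilde f^{k}_t)^2\,dt$ requires $\bE\int_0^T(\delta\xi^k_t)^2\,dt$ with $\delta\xi^k=(A\,\delta x^k+\delta b^k)/(2\eta)$, and since $A\in\cM_{-1}$, controlling $\bE\int_0^T (A_t\,\delta x^k_t)^2\,dt$ by $\|\delta x^k\|_\alpha^2$ needs $\int_0^T(T-t)^{2\alpha-2}\,dt<\infty$, i.e. $\alpha>1/2$, which Assumption \ref{ass-PVE} does not guarantee. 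The fix is already contained in your own a priori estimate, which controls $\bE\int_0^T(\delta\xi_t)^2\,dt$ directly by $\bE\int_0^T(\delta f_t)^2\,dt$: run the contraction on the triple, i.e. include the $L^2$ norm of $\delta\xi^k$ (equivalently of $\delta y^k$) in the metric, so that the self-closing recursion is $\bE\int_0^T(\delta\xi^{k+1}_t)^2\,dt\le C'\fd^2\,\bE\int_0^T(\delta\xi^{k}_t)^2\,dt$, and then read off the $\cH_\alpha$ and $\cH_\gamma$ bounds afterwards. This is precisely the paper's choice of contraction space $\cH_\alpha\times\cH_\gamma\times L^2_{\bF}([0,T]\times\Omega;\bR)$.
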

\begin{proof}
Let us fix $\fd>0$, $Y\in L^2_{\bF}([0,T]\times \Omega;\mathbb R)$ and $f\in L^2_{\bF}([0,T]\times \Omega;\mathbb R)$ and consider the following system:
\begin{equation}\label{conditional-MF-FBSDE-4}
\left\{\begin{aligned}
d\widetilde X_t =&~-\frac{1}{2\eta_t} (A_t\widetilde X_t +\widetilde B_t)\,dt,\\
    -d\widetilde B_t =&~\left(\kappa_t \fp \bE\left[\left.\frac{1}{2\eta_t}\left(A_t\widetilde X_t+\widetilde B_t\right)\right|\cF^0_t\right]+\kappa_t \fd \bE\left[\left.\frac{Y_t}{2\eta_t}\right|\cF^0_t\right]+f_t-\frac{A_t\widetilde B_t}{2\eta_t}\right)\,dt-Z^{\widetilde B}_t\,d\widetilde{W}_t,\\
d\widetilde Y_t=&~\left(-2\lambda_t\widetilde X_t-\kappa_t\fp \bE\left[\left.\frac{A_t\widetilde X_t+\widetilde B_t}{2\eta_t}\right|\mathcal F^0_t\right]-\kappa_t\fd \bE\left[\left.\frac{Y_t}{2\eta_t}\right|\mathcal F^0_t\right]-f_t\right)\,dt+Z^{\widetilde Y}_t\,d\widetilde W_t,\\
\widetilde X_0 =&~\mathcal X \\
\widetilde B_T =&~ 0.
\end{aligned}
\right.
\end{equation}
Then
\[
	f(Y):=\kappa\fd \bE\left[\left.\frac{Y}{2\eta}\right|\mathcal F^0\right]+f\in L^2_{\bF}([0,T]\times \Omega;\mathbb R).
\]	
	Thus, by assumption there exists a unique solution $$(\widetilde X,\widetilde B,\widetilde Y,Z^{\widetilde B},Z^{\widetilde Y})\in \mathcal H_\alpha\times\mathcal H_\gamma\times {D^2_{\bF}([0,T]\times \Omega;\mathbb R)}\times L^2_{\bF}([0,T]\times \Omega;\mathbb R^m)\times L^2_{\bF}([0,T-]\times \Omega;\mathbb R^m)$$ to \eqref{conditional-MF-FBSDE-4}, { and $\widetilde Y=A\widetilde X+\widetilde B$.}
This defines a mapping $Y \mapsto (\widetilde X,\widetilde B,\widetilde Y)$ from $L^2_{\bF}([0,T]\times \Omega;\mathbb R)$ to $\cH_\alpha\times\cH_\gamma\times L^2_{\bF}([0,T]\times \Omega;\mathbb R)$ and hence also a mapping $(X,B,Y) \mapsto (\widetilde X,\widetilde B,\widetilde Y)$ on $\cH_\alpha\times\cH_\gamma\times L^2_{\bF}([0,T]\times \Omega;\mathbb R)$. In what follows we prove that this second mapping is a contraction for some $\fd >0$. For the unique fixed point the system (\ref{conditional-MF-FBSDE-4}) reduces to the system (\ref{conditional-MF-FBSDE-3}) with $\fp$ replaced by $\fp + \fd$. This then yields the desired result.

In order to establish the contraction property, we denote for two processes $Y,Y' \in L^2_{\bF}([0,T]\times \Omega;\mathbb R)$ by $(\widetilde X, \widetilde B, \widetilde Y)$ and $(\widetilde X', \widetilde B', \widetilde Y')$ the corresponding processes defined by \eqref{conditional-MF-FBSDE-4} and put
\[
 \widetilde \xi_t = \dfrac{\widetilde Y_t}{2\eta_t}, \quad \widetilde \xi'_t = \dfrac{\widetilde Y'_t}{2\eta_t}, \quad
\widetilde \mu_t =
 \bE \left[\left.\widetilde \xi_t \right| \cF^0_t\right], \quad
 \widetilde \mu'_t = \bE \left[\left.\widetilde \xi'_t \right| \cF^0_t\right].
\]

For any $\epsilon > 0$ integration by part yields that
\begin{equation*}
\begin{split}
 (\widetilde X^{'}_{T-\epsilon} - \widetilde X_{T-\epsilon}) \widetilde Y_{T-\epsilon} = &~\int_0^{T-\epsilon} (\widetilde X^{'}_{s} - \widetilde X_{s}) \,d\widetilde Y_s+\int_0^{T-\epsilon} \widetilde Y_s\,d (\widetilde X^{'}_{s} - \widetilde X_{s})\\
 = &~ - \int_0^{T-\epsilon} (\widetilde X^{'}_{s} - \widetilde X_{s})(\fp \kappa_s \widetilde \mu_s+2\lambda_s \widetilde X_s)\,ds-\int_0^{T-\epsilon} \widetilde Y_s(\widetilde \xi'_s- \widetilde \xi_s)\,ds\\
 &~- \int_0^{T-\epsilon} (\widetilde X^{'}_{s} - \widetilde X_{s})f(Y_s)\,ds  +\int_0^{T-\epsilon} (\widetilde X^{'}_{s} - \widetilde X_{s})  Z^{\widetilde Y}_s\,d\widetilde W_s
\end{split}
\end{equation*}
and
\begin{equation*}
\begin{split}
 (\widetilde X_{T-\epsilon} - \widetilde X^{'}_{T-\epsilon}) \widetilde Y^{'}_{T-\epsilon}  =&~  - \int_0^{T-\epsilon} (\widetilde X_{s} - \widetilde X^{'}_{s})(\fp \kappa_s \widetilde \mu^{'}_s+2\lambda_s \widetilde X^{'}_s)\,ds-\int_0^{T-\epsilon} \widetilde Y^{'}_s(\widetilde \xi_s- \widetilde \xi'_s)\,ds\\
 &~ - \int_0^{T-\epsilon} (\widetilde X_{s} - \widetilde X^{'}_{s})f(Y^{'}_s)\,ds+\int_0^{T-\epsilon} (\widetilde X_{s} - \widetilde X^{'}_{s}) Z^{\widetilde Y^{'}}_s\,d\widetilde W_s.
\end{split}
\end{equation*}
Taking the sum of these two equations and using that
\begin{equation*}
    \begin{split}
    (\widetilde X_{T-\epsilon} - \widetilde X^{'}_{T-\epsilon}) (\widetilde Y^{'}_{T-\epsilon} - \widetilde Y_{T-\epsilon})
    =&~ -A_{T-\epsilon}(\widetilde X_{T-\epsilon} - \widetilde X^{'}_{T-\epsilon})^2 - (\widetilde X_{T-\epsilon} - \widetilde X^{'}_{T-\epsilon})(\widetilde B_{T-\epsilon} - \widetilde B^{'}_{T-\epsilon}) \\
    \leq &~ - (\widetilde X_{T-\epsilon} - \widetilde X^{'}_{T-\epsilon})(\widetilde B_{T-\epsilon} - \widetilde B^{'}_{T-\epsilon})
    \end{split}
\end{equation*}
yields
\begin{equation*}
    \begin{split}
        &~2  \int_0^{T-\epsilon} \eta_s (\widetilde \xi'_s- \widetilde \xi_s)^2 \,ds + 2 \int_0^{T-\epsilon} \lambda_s  (\widetilde X^{'}_{s} - \widetilde X_{s})^2\, ds \\
        &~+ \int_0^{T-\epsilon} (\widetilde X^{'}_{s} - \widetilde X_{s})( f(Y'_s) -f(Y_s) )\,ds +\int_0^{T-\epsilon} (\widetilde X_{s} - \widetilde X^{'}_{s}) (\widetilde Z^{Y^{'}}_s - \widetilde Z^{Y}_s) \,d\widetilde W_s \\
        \leq&~ - (\widetilde X_{T-\epsilon} - \widetilde X^{'}_{T-\epsilon})(\widetilde B_{T-\epsilon} - \widetilde B^{'}_{T-\epsilon}) +  \int_0^{T-\epsilon}\left[\fp \kappa_s (\widetilde \mu_s - \widetilde \mu'_s)(\widetilde X'_s - \widetilde X_s) \right]\, ds .
    \end{split}
\end{equation*}
Taking expectations on both sides drops the martingale part. Then we can pass to the limit as $\epsilon \to 0$ to drop the term $(\widetilde X_{T-\epsilon} - \widetilde X^{'}_{T-\epsilon})(\widetilde B_{T-\epsilon} - \widetilde B^{'}_{T-\epsilon})$ because $\widetilde X$, $\widetilde X'\in\mathcal H_\alpha$ and $\widetilde B$, $\widetilde B'\in\cH_\gamma$. Furthermore, since $2|ab| \leq \theta a^2 +  b^2/\theta$ for any $\theta >0$, we obtain:
\begin{equation*}
    \begin{split}
        &\bE\left[  \int_0^{T}\left|\fp \kappa_s (\widetilde \mu_s - \widetilde \mu'_s)(\widetilde X'_s - \widetilde X_s) \right|\, ds\right]  \leq \frac{\fp \kappamax }{2\theta} \bE\left[  \int_0^{T}\left| \widetilde \mu_s - \widetilde \mu'_s\right|^2 \, ds\right] + \frac{\fp \kappamax  \theta}{2} \bE\left[  \int_0^{T}\left|\widetilde X'_s - \widetilde X_s \right|^2 \, ds\right] \\
        &\quad  \leq \frac{ \kappamax }{2\theta} \bE\left[  \int_0^{T}\left| \widetilde \xi_s - \widetilde \xi'_s\right|^2 \, ds\right] + \frac{ \kappamax  \theta}{2} \bE\left[  \int_0^{T}\left|\widetilde X'_s - \widetilde X_s \right|^2 \, ds\right],
    \end{split}
\end{equation*}
and
\begin{equation*}
    \begin{split}
        &\bE\left[  \int_0^{T}\left|(\widetilde X^{'}_{s} - \widetilde X_{s})( f(Y'_s) -f(Y_s) ) \right|\, ds\right] \leq \fd \bE\left[  \int_0^{T}\kappa_s \left|\widetilde X^{'}_{s} - \widetilde X_{s}\right| \bE\left[\left.\frac{|Y'_s - Y_s|}{2\eta_s}\right|\mathcal F^0_s \right] \, ds\right]  \\
        & \quad \leq \fd \frac{\kappamax }{2\etamin} \bE\left[  \int_0^{T}\left|\widetilde X^{'}_{s} - \widetilde X_{s}\right| \bE\left[\left. |Y'_s - Y_s|\right|\mathcal F^0_s \right] \, ds\right] \\
        &\quad \leq  \fd \frac{\kappamax }{4\etamin} \bE\left[  \int_0^{T}\left(\widetilde X^{'}_{s} - \widetilde X_{s}\right)^2  \, ds\right] +  \fd \frac{\kappamax }{4\etamin} \bE\left[  \int_0^{T} \bE\left[\left. (Y'_s - Y_s)^2 \right|\mathcal F^0_s \right]  \, ds\right].
    \end{split}
\end{equation*}
All these inequalities imply that 
for any $\theta > 0$
\begin{equation*}
    \begin{split}
        &~\left(2\etamin-\frac{\kappamax}{2\theta}\right) \bE\left[  \int_0^{T}  (\widetilde \xi'_s- \widetilde \xi_s)^2\, ds\right] + \left(2\lambdamin-\frac{\kappamax}{2}\theta\right) \bE\left[ \int_0^{T}   (\widetilde X^{'}_{s} - \widetilde X_{s})^2 \,ds\right] \\
        \leq&~\frac{\kappamax}{4\etamin}\fd \bE\left[   \int_0^{T}(\widetilde X'_s - \widetilde X_s)^2 \,ds\right] +\frac{\kappamax}{4\etamin} \fd \bE\left[ \int_0^T  (Y'_s - Y_s)^2\,ds\right].
    \end{split}
\end{equation*}
In view of Assumption \ref{ass-PVE} we can choose a $\theta>0$ such that
\[
    2\etamin-\frac{\kappamax}{2\theta}>0,\qquad 2\lambdamin-\frac{\kappamax\theta}{2}>0,
\]
which implies that there exists a constant $C$ depending only on the coefficients $\kappa$, $\lambda$ and {$\eta$}, such that
\begin{equation*}
    \begin{split}
        &~\bE\left[ \int_0^{T}  (\widetilde \xi'_s- \widetilde \xi_s)^2 \,ds\right] +  \bE\left[ \int_0^{T}   (\widetilde X^{'}_{s} - \widetilde X_{s})^2 \,ds\right] \\
        \leq&~ C\fd\bE\left[   \int_0^{T}(\widetilde X'_s - \widetilde X_s)^2 \,ds\right] +C \fd \bE\left[ \int_0^T  (Y'_s - Y_s)^2\,ds\right].
    \end{split}
\end{equation*}
Thus, when $\fd$ is small enough,
\[
    \bE\left[\int_0^T|\widetilde Y_t-\widetilde Y'_t|^2\,dt\right]\leq \mathfrak{a} \bE\left[\int_0^T|Y_t-Y'_t|^2\,dt\right]
\]
for some $\mathfrak{a} < 1$. We notice that the bound on $\fd$ only depends on $\kappa$, $\eta$ and $\lambda$.

Now using the definition of $\widetilde \xi$ and $\widetilde \xi'$ the solution formula for linear BSDEs yields
\begin{equation*}
|\widetilde B_t-\widetilde B_t'| \leq \kappamax \bE \left[ \int_t^T\left\{\fp\bE\left[|\widetilde \xi_s- \widetilde \xi'_s| \bigg| \cF^0_s \right]+\fd \bE\left[ \frac{|Y_s-  Y'_s|}{2\eta_s}\bigg| \cF^0_s\right]\right\}ds \bigg| \cF_t \right].
\end{equation*}
Thus
\begin{equation*}
    \begin{split}
|\widetilde B_t-\widetilde B_t'|  \leq&~ C(T-t)^\gamma \bE \left[\left. \int_t^T\bE\left[\left.|\widetilde \xi_s- \widetilde \xi'_s|^{\frac{1}{1-\gamma}} \right| \cF^0_s \right]  ds\right| \cF_t \right]^{1-\gamma}\\
&~+
C\fd (T-t)^\gamma \bE \left[\left. \int_t^T \bE\left[\left. |Y_s-  Y'_s|^{\frac{1}{1-\gamma}}  \right| \cF^0_s\right] ds \right| \cF_t \right]^{1-\gamma}.
    \end{split}
\end{equation*}
Since $2\gamma < 1$, Doob's maximal inequality along with the previously established $L^2$ bounds yields
$$ \bE \left[ \sup_{t\in [0,T]} \frac{|\widetilde B_t-\widetilde B_t'|^2}{(T-t)^{2\gamma}} \right] \leq  C\bE \left[ \int_0^T|\widetilde \xi_s- \widetilde \xi'_s|^2  ds \right] +C\fd^2\bE \left[ \int_0^T|Y_s-  Y'_s|^2 ds \right].$$
Now using the dynamics of $\widetilde X$ and $\widetilde X'$ we obtain
\begin{equation*}
    \begin{split}
|\widetilde X_t-\widetilde X_t'|=&~ \left|\int_0^t -\{\fp(2\eta_s)^{-1}(\widetilde B_s-\widetilde B_s')\}e^{-\int_s^t (2\eta_r)^{-1}A_r\,dr}\,ds\right| \\
	\leq&~ C\int_0^t \{|\widetilde B_s-\widetilde B_s'|\}\left(\frac{T-t}{T-s}\right)^\alpha\, ds\\
	\leq&~ C\frac{T^{1+\gamma-\alpha}}{1+\gamma-\alpha} (T-t)^\alpha\sup_{0\leq s\leq T} \frac{|\widetilde B_s-\widetilde B'_s|}{(T-s)^\gamma}.  \end{split}
\end{equation*}
Hence this leads to
\[
    \bE \left[ \sup_{t\in [0,T]} \frac{|\widetilde X_t-\widetilde X_t'|^2}{(T-t)^{2\alpha}} \right] \leq C\| \widetilde B_s-\widetilde B_s'\|^2_{\gamma}.
\]

To summarize, we obtain a constant $\fd$ such that $(X,B,Y) \to (\widetilde X, \widetilde B,\widetilde Y)$ is a contraction in $\cH_\alpha \times \cH_\gamma \times L^2_{\bF}([0,T]\times \Omega;\mathbb R)$. Since $\widetilde Y = A \widetilde X + \widetilde B$, $\widetilde Y \in D^2_\bF([0,T]\times\Omega;\bR)$ and using Lemma \ref{lem:BMO_martingale}, we see that the following system admits a unique solution $(\widetilde X,\widetilde B,\widetilde Y,Z^{\widetilde B},Z^{\widetilde Y})\in \mathcal H_\alpha \times\mathcal H_\gamma\times D^2_{\bF}([0,T]\times \Omega;\mathbb R)\times L^2_{\bF}([0,T]\times \Omega;\mathbb R^m)\times L^2_{\bF}([0,T-]\times \Omega;\mathbb R^m) $:
\begin{equation*}
\left\{\begin{aligned}
d\widetilde X_t =&~-\frac{1}{2\eta_t} (A_t\widetilde X_t +\widetilde B_t)\,dt,\\
    -d\widetilde B_t =&~\left(\kappa_t \fp \bE\left[\left.\frac{1}{2\eta_t}\left(A_t\widetilde X_t+\widetilde B_t\right)\right|\cF^0_t\right]+\kappa_t \fd \bE\left[\left.\frac{\widetilde Y_t}{2\eta_t}\right|\cF^0_t\right]+f_t-\frac{A_t\widetilde B_t}{2\eta_t}\right)\,dt-Z^{\widetilde B}_t\,d\widetilde{W}_t,\\
d\widetilde Y_t=&~\left(-2\lambda_t\widetilde X_t-\kappa_t\fp \bE\left[\left.\frac{A_t\widetilde X_t+\widetilde B_t}{2\eta_t}\right|\mathcal F^0_t\right]-\kappa_t\fd \bE\left[\left.\frac{\widetilde Y_t}{2\eta_t}\right|\mathcal F^0_t\right]-f_t\right)\,dt+Z^{\widetilde Y}_t\,d\widetilde W_t,\\
\widetilde X_0 =&~\mathcal X \\
\widetilde B_T =&~ 0.
\end{aligned}
\right.
\end{equation*}
Using again the relation $\widetilde Y=A\widetilde X+\widetilde B$, the above system is equivalent to \eqref{conditional-MF-FBSDE-3} with $\fp$ replaced by $\fp+\fd$. This proves the assertion.
\end{proof}

Using Lemmata \ref{lem:BMO_martingale}, \ref{induction-basis-eta} and \ref{induction-step-eta} and by induction on $\fp$, we obtain the following result.

\begin{proposition}\label{existence_MF_FBSDE}
There exists a unique solution $(X,B,Y,Z^{B},Z^{Y}) \in \cH_\alpha \times \cH_\gamma \times D^2_{\bF}([0,T]\times\Omega;\bR)\times L^2_{\bF}([0,T]\times \Omega;\mathbb R^m)\times L^2_{\bF}([0,T-]\times \Omega;\mathbb R^m)$ to the FBSDEs \eqref{conditional-MF-FBSDE} and \eqref{conditional-MF-FBSDE_2}. Moreover, there exists a constant $C>0$ depending on $\eta$, $\lambda$, $\kappa$, $T$ and $\|\mathcal X\|_{L^2}$, such that
\[
   \|X\|_{\cH_\alpha} + \|B\|_{\cH_\gamma} + \bE\left[\int_0^T|Y_t|^2\,dt\right] \leq C.
\]
\end{proposition}
From the equations \eqref{conditional-MF-FBSDE_2}, \eqref{optimal-candidate-control-PVE} and recalling $Y=AX+B$, where $(X,Y,B)$ is from Proposition \ref{existence_MF_FBSDE},
we obtain the following candidates of the optimal portfolio process $X^*$ and the optimal trading strategy $\xi^*$ for the representative player:
\begin{equation} \label{eq:optimal_candidates}
\begin{split}
    X^{*}_t & =X_t = \mathcal Xe^{-\int_0^t\frac{A_r}{2\eta_r}\,dr}-\int_0^t\frac{B_s}{2\eta_s}e^{-\int_s^t\frac{A_r}{2\eta_r}\,dr}\,ds,\\
    \xi^{*}_t & =\frac{Y_t}{2\eta_t} = \frac{A_t X_t+B_t}{2\eta_t}= \mathcal Xe^{-\int_0^t\frac{A_r}{2\eta_r}\,dr}\frac{A_t}{2\eta_t}+\frac{B_t}{2\eta_t}-\frac{A_t}{2\eta_t}\int_0^t\frac{B_s}{2\eta_s}e^{-\int_s^t\frac{A_r}{2\eta_r}\,dr}\,ds.
\end{split}
\end{equation}

By construction, $X^*_T = 0$ and hence $\xi^*$ is an admissible liquidation strategy.
The following proposition shows that it is indeed the optimal liquidation strategy and that its conditional expectation defines the desired equilibrium for our MFG. In particular, it proves Theorem \ref{main-result}.

\begin{proposition}\label{verification-PVE}
The process $\xi^{*}$ given by \eqref{eq:optimal_candidates} or equivalently by \eqref{optimal-candidate-control-PVE} is an optimal control for the representative player, $X^*$ is the related optimal state process, and the aggregation effect given by $\mu^*:=\bE[\xi^{*}| \cF^0]$ is the solution to the MFG \eqref{MFG-PVE}. Moreover, the value function is given by \eqref{value-function-PVE}. 
\end{proposition}
\begin{proof}
Let $(X,B,Y)$ be the solution given by Proposition \ref{existence_MF_FBSDE}.
For any $\xi \in \cA_{\bF}(\mathcal X)$, let $X^\xi$ be the corresponding state process. Then it holds that,
\begin{equation} \label{T0}
	\lim_{s\nearrow T} \bE \left[X^\xi_sY_s |\mathcal X\right] = 0.
\end{equation}
Indeed,  since $A \in \cM_{-1}$, for any $0 \leq s < T$
\begin{equation*}
\begin{split}
	\left| \bE\left[X^\xi_sY_s |\mathcal X\right] \right|&= \left| \bE\left[X^\xi_s (X_sA_s+B_s) |\mathcal X\right]  \right|\\
	&\leq \frac{C}{T-s}\bE\left[(X^\xi_s)^2+(X_s)^2 |\mathcal X\right]+\bE\left[|X^\xi_sB_s| |\mathcal X\right]\\
	&=\frac{C}{T-s}\bE\left[\left(\int_s^T\xi_u\,du\right)^2+\left(\int_s^T\xi_u^*\,du\right)^2\bigg|\mathcal X\right]+\bE\left[|X^\xi_sB_s| |\mathcal X\right]\\
	&\leq C\bE\left[\int_s^T\xi_u^2\,du+\int_s^T(\xi_u^*)^2\,du\bigg|\mathcal X\right]+\bE\left[|X^\xi_sB_s||\mathcal X\right]\xrightarrow{s\nearrow T}0.
\end{split}
\end{equation*}

With this, we can now show that $\xi^*$ is a best response against $\mu^*$. In fact, for each $\epsilon > 0$,
the convexity of the Hamiltonian yields
\begin{equation*}
\begin{split}
   &~\bE\left[\int_0^{T-\epsilon}\left(\kappa_s\mu^*_sX^\xi_s+\eta_s\xi_s^2+\lambda_s(X^\xi_s)^2\right)\,ds\bigg|\mathcal X\right]-\bE\left[\int_0^{T-\epsilon}\left(\kappa_s\mu^*_sX_s+\eta_s(\xi_s^*)^2 +\lambda_s (X_s)^2\right)\,ds\bigg|\mathcal X\right]\\
     =&~ \bE\left[\int_0^{T-\epsilon} \left(H(s,\xi_s,X^\xi_s,Y_s;\mu^*)-H(s,\xi^*_s,X_s,Y_s;\mu^*)+(\xi_s-\xi^*_s)Y_s\right)\,ds\bigg|\mathcal X\right]\\
      \geq&~\mathbb E\left[\left.\int_0^{T-\epsilon}(H(s,\xi^{*}_s,X^{\xi}_s,Y_s;\mu^*)-H(s,\xi^*,X_s,Y_s;\mu^*)+(\xi_s-\xi^*_s)Y_s)\,ds\right|\mathcal X\right]\\
   \geq &~ \bE\left[\int_0^{T-\epsilon} \left((\kappa_s\mu^*_s+2\lambda_sX_s)(X^\xi_s-X_s)+(\xi_s-\xi^*_s)Y_s\right)\,ds \bigg|\mathcal X\right].
\end{split}
\end{equation*}
Furthermore, integration by part implies that for any $\epsilon > 0$,
\begin{equation}\label{eq:tech_for_value_fct}
    \begin{split}
& Y_{T-\epsilon}(X_{T-\epsilon}-X^\xi_{T-\epsilon}) \\ =&~Y_0(X_0-X^\xi_0)+\int_0^{T-\epsilon} (X_s-X^\xi_s)\,dY_s+\int_0^{T-\epsilon} Y_s\,d(X_s-X^\xi_s)\\
    =&~-\int_0^{T-\epsilon}(\kappa_s\mu^*_s+2\lambda_sX_s)(X_s-X^\xi_s)\,ds+\int_0^{T-\epsilon}Z^Y_s(X_s-X^\xi_s)\,d\widetilde W_s\\
    &~-\int_0^{T-\epsilon} Y_s(\xi^*_s-\xi_s)\,ds.
    \end{split}
\end{equation}
Therefore,
\begin{eqnarray*}
    &&\bE\left[\int_0^{T-\epsilon}\left(\kappa_s\mu^*_sX^\xi_s+\eta_s\xi_s^2+\lambda_s(X^\xi_s)^2\right)\,ds\bigg|\mathcal X\right]-\bE\left[\int_0^{T-\epsilon}\left(\kappa_s\mu^*_sX_s+\eta_s(\xi_s^*)^2+\lambda_s(X_s)^2\right)\,ds\bigg|\mathcal X\right]\\
     &&\quad \geq \bE \left[  Y_{T-\epsilon}(X_{T-\epsilon}-X^\xi_{T-\epsilon}) \bigg|\mathcal X\right].
\end{eqnarray*}
The equation \eqref{T0} does indeed yield
$$\lim_{\epsilon \to 0}  \bE \left[  Y_{T-\epsilon}(X_{T-\epsilon}-X^\xi_{T-\epsilon}) |\mathcal X \right] = 0.$$
Using the Lebesgue convergence theorem and taking $\eps\rightarrow0$, we obtain
\[
\bE \left[ \int_0^T\left( \kappa_sX_s\mu^*_s+\eta_s\xi^2_s+\lambda_s X_s^2\right)\,ds \bigg|\mathcal X\right]-\bE \left[ \int_0^T\left( \kappa_sX_s\mu^*_s+\eta^*_s\xi^2_s+\lambda_s (X_s)^2\right)\,ds \bigg|\mathcal X\right]\geq 0.
\]
In other words $ J(\mathcal X,\xi;\mu^*)-J(\mathcal X,\xi^*;\mu^*)\geq 0$. Finally, \eqref{T0} and \eqref{eq:tech_for_value_fct} again yield
$$V(\mathcal X;\mu^*)=\frac{1}{2}A_0 {\mathcal X}^2+\frac{1}{2}B_0\mathcal X + \frac{1}{2} \bE \left[ \int_0^{T} \kappa_sX_s\mu^*_s\,ds \bigg| \mathcal X\right].$$
Now assume $\mu'$ is another equilibrium, i.e. there is an optimal control $\xi'$ such that $J(\mathcal X,\xi;\mu')\geq J(\mathcal X,\xi';\mu')$ for any $\xi$, and $\mu'=\mathbb E[\xi'|\mathcal F^0]$. Note that $J(\mathcal X,\xi;\mu')$ is strictly convex for $\xi$. Thus, there is a unique optimal control, which must satisfy $\xi'=Y'/2\eta$, where $(X',Y')$ is the solution to \eqref{optimality-system-PVE} with $\mu$ replaced by $\mu'$. By the uniqueness of the solution of \eqref{conditional-MF-FBSDE}, it must hold that $\mu'=\mu^*$ as well as $\xi'=\xi^*$.
\end{proof}

\begin{remark}
If we suppose that $\mathcal X=x$ is a deterministic initial value of the state process at time $\tau > 0$, then we can define the space of admissible controls as
\[
	\mathcal A_{\mathbb Q}(\tau,x):= \left\{  \xi \in L^2_{\mathbb Q}(\tau,T): ~\int_\tau^T \xi_s\,ds = x\right\}
\]	
where $\mathbb Q=(\mathcal Q_t)_{0\leq t\leq T}$ is the filtration generated by $W$ and $W^0$. Assuming that the cost coefficients satisfy Assumption \ref{ass-PVE} with $\bF$ replaced by $\mathbb Q$, the same arguments as before show that the FBSDE
\begin{equation*}
\left\{\begin{aligned}
X_s =&x -\int_\tau^s \dfrac{Y_t}{2\eta_t}\,dt,\quad X_T=0,\\
Y_s =&Y_r + \int_s^r \left(\kappa_t\bE\left[\left.\frac{Y_t}{2\eta_t}\right|\cF^0_t\right]+2\lambda_tX_t\right)\,dt+\int_s^r Z_t\,d\widetilde{W}_t,~r<T
\end{aligned}
\right.
\end{equation*}
has a unique solution $(X,Y=AX+B,Z)$ with $(X,B)\in\mathcal H_\alpha\times\mathcal H_\gamma$ and $\mu^* = \bE (Y/(2\eta)|\cF^0)$ is {a} solution of the MFG starting at time $\tau$. Moreover the value function is given by:
    $$V(\tau,x;\mu^*)=\frac{1}{2}A_\tau x^2+\frac{1}{2}B_\tau x + \frac{1}{2} \bE \left[ \int_\tau^{T} \kappa_sX_s\mu^*_s\,ds \bigg| \mathcal Q_\tau\right].$$
Since $(X,B)\in\mathcal H_\alpha\times\cH_\gamma$ and $\xi^*\in\mathcal A_{\mathbb Q}(\tau,x)$,
\begin{equation*}
    \begin{split}
    &~B_\tau x+\bE\left[\int_\tau ^T\kappa_s\mu^*_sX_s\,ds\bigg|\mathcal{Q}_\tau \right]\\
    \leq&~ x(T-\tau )^\gamma\sup_{\tau \leq t\leq T}\left|\frac{B_t}{(T-t)^\gamma}\right|+ \kappamax(T-\tau )^\alpha\bE\left[\left.\int_\tau ^T|\mu^*_s|\,ds\sup_{\tau \leq t\leq T}\left|\frac{X_t}{(T-t)^\alpha}\right|\right|\mathcal Q_\tau \right]\xrightarrow{\tau \nearrow T} 0.
    \end{split}
\end{equation*}
Since $A_\tau  \to +\infty$ as $\tau $ tends to $T$, we get the following terminal condition for the value function:
\begin{equation*}
\lim_{\tau \uparrow T}V(\tau ,x;\mu)=
\left\{\begin{array}{ll}
0, \quad x=0;\\
\infty,\quad x \neq  0.
\end{array}\right.
\end{equation*}
\end{remark}

\subsection{Common information environments}\label{sec-CVE}

The benchmark case where all players share the same information, except for their initial value can be analyzed in greater detail. In this section we therefore assume that all  randomness is generated by the common Brownian motion $W^0$ and the initial value $\mathcal X$.

\begin{ass}\label{ass-CVE}
The processes $\kappa$, $\lambda$, $\eta$ and $1/\eta$ belong to $L^{\infty}_{\bF^0}([0,T]\times\Omega;[0,\infty))$.
\end{ass}
The weak interaction condition \eqref{ineq-PVE} is not required in this section. Under the common information assumption the conditional mean-field FBSDE \eqref{conditional-MF-FBSDE} reduces to the following FBSDE:
\begin{equation}\label{FBSDE-CVE}
\left\{\begin{aligned}
dX_t &=-\frac{Y_t}{2\eta_t}\,dt,\\
-dY_t &=\left(\frac{\kappa_t}{2\eta_t}\bE \left[ Y_t \big| \cF^0_t\right]+2\lambda_tX_t\right)\,dt-Z_t\,dW^0_t,\\
X_0 &=\mathcal X, \\
 X_T & =0.
\end{aligned}
\right.
\end{equation}

\subsubsection{Common initial portfolio}\label{sec:common-initial}

In this subsection we further assume that the initial portfolio is common to all players, i.e. $\mathcal X=x\in\mathbb R$. In this case all processes are $\bF^0$-adapted
and the mean-field FBSDE \eqref{FBSDE-CVE} simplifies to the regular FBSDE
\begin{equation}\label{FBSDE-CVE-CIV}
\left\{\begin{aligned}
dX_t &=-\frac{Y_t}{2\eta_t}\,dt,\\
-dY_t &=\left(\frac{\kappa_tY_t}{2\eta_t}+2\lambda_tX_t\right)\,dt-Z_t\,dW^0_t,\\
X_0 &=x, \\
 X_T & =0.
\end{aligned}
\right.
\end{equation}
In this setting, we can check that $Y$ is given by $Y=A^\kappa X$ where
\begin{equation}\label{Y-AX}
-dA^\kappa_t=\left(2\lambda_t+\frac{\kappa_tA^\kappa_t}{2\eta_t}-\frac{(A^\kappa_t)^2}{2\eta_t}\right)\,dt-Z^{A^\kappa}_tdW^0_t, ~~A^\kappa_T=\infty.
\end{equation}
This singular terminal condition on $A^\kappa$ is necessary to satisfy the constraint $X_T=0$. This equation has a unique solution, due to Corollary \ref{cor-appendix} in the appendix.
By \eqref{FBSDE-CVE-CIV},
\[
    X_t=xe^{-\int_0^t\frac{A^\kappa_r}{2\eta_r}\,dr}.
\]
The candidate of the optimal strategy is $\xi^*=Y/2\eta$, where $Y$ is the solution to \eqref{FBSDE-CVE-CIV}. Since both $Y$ and $\eta$ are $\mathbb F^0$-adapted, the consistency condition \eqref{eq:consistency_condition} reads 
$ \mu^*=\mathbb E[\xi^*|\mathcal F^0]=\xi^*$. 

\begin{lemma}\label{admissible-control}
Under Assumption \ref{ass-CVE}, the processes $A^\kappa$, $X$, $Y=A^\kappa X$ and $\xi^*=\mu^*=\frac{Y}{2\eta}$ have the same sign as $x$. Moreover
$$A^\kappa\in \cM_{-1},\ X \in \cM_\alpha, \ Y\in \cM_{\alpha-1},\ \xi^* \in \cM_{\alpha-1}.$$
\end{lemma}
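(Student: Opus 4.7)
The plan is to split the statement into (i) the non-negativity of $A$, which requires a genuine argument, and (ii) the non-negativity and weighted $L^\infty$-bounds of $X^*$, $Y$ and $\xi^*$, which will follow mechanically from (i), the explicit formula for $X^*$, Fact \ref{fact_H_alpha}, and the pointwise bound \eqref{expA}.

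For (i), I would exploit that $A$ is constructed in Corollary \ref{cor-appendix} as the monotone limit of the solutions $A^n$ of the truncated BSDEs
\[
    -dA^n_t = \left(2\lambda_t + \frac{\kappa_t A^n_t}{2\eta_t} - \frac{(A^n_t)^2}{2\eta_t}\right) dt - Z^n_t\,dW^0_t, \qquad A^n_T = n,
\]
with $A^n$ a priori bounded. Rewriting the generator as $2\lambda_t - A^n_t K^n_t$ with $K^n_t := (A^n_t - \kappa_t)/(2\eta_t) \in L^\infty$, the $A^n$ solves a linear BSDE with bounded coefficient $-K^n$ and non-negative inhomogeneity $2\lambda$. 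Applying the Feynman-Kac representation for linear BSDEs then yields
\[
    A^n_t = \bE\left[n \exp\left(-\int_t^T K^n_s\,ds\right) + \int_t^T 2\lambda_s \exp\left(-\int_t^s K^n_r\,dr\right) ds \,\bigg|\, \mathcal F_t\right] \geq 0,
\]
and monotone convergence as $n \to \infty$ gives $A \geq 0$.

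For (ii), the explicit formula $X^*_t = x \exp(-\int_0^t A_r/(2\eta_r)\,dr)$ together with $A \geq 0$ and the standing liquidation setting $x \geq 0$ gives $X^* \geq 0$, and the pointwise bound \eqref{expA} applied with $r=0$, $s=t$ yields $0 \leq X^*_t \leq (x/T^\alpha)(T-t)^\alpha$, hence $X^* \in \mathcal M_\alpha$. The process $Y = AX^*$ is the product of two non-negative processes and so is non-negative, while the $\mathcal M$-version of the second item of Fact \ref{fact_H_alpha} gives $\|Y\|_{\mathcal M_{\alpha-1}} \leq \|A\|_{\mathcal M_{-1}} \|X^*\|_{\mathcal M_\alpha}$. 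Finally, $\xi^* = Y/(2\eta)$ is non-negative because $\eta > 0$, and the bound $1/\eta \leq 1/\eta_\star$ gives $\xi^* \in \mathcal M_{\alpha - 1}$.

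The only non-mechanical obstacle will be the linearization argument for $A^n \geq 0$; once it is in place, every remaining assertion follows from the explicit formulas, Fact \ref{fact_H_alpha}, and the bound \eqref{expA}.
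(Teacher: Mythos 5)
Your plan has a genuine gap: it never establishes the upper bound $A_t \leq C/(T-t)$, i.e.\ $A\in\cM_{-1}$, which is one of the four assertions of the lemma and is exactly what your appeal to Fact~\ref{fact_H_alpha} (namely $\|Y\|_{\cM_{\alpha-1}}\leq\|A\|_{\cM_{-1}}\|X^*\|_{\cM_\alpha}$) presupposes. Non-negativity of $A$ (your step (i)) and the decay of $X^*$ only control $A$ from below; without a matching upper bound the claims $A\in\cM_{-1}$ and $Y,\xi^*\in\cM_{\alpha-1}$ remain unproven. The paper obtains this bound by passing to $\widetilde A_t = A_t e^{\int_0^t \kappa_r/(2\eta_r)\,dr}$ (Corollary~\ref{cor-appendix}), applying the two-sided estimate \eqref{asymptotic-expansion-A} of Lemma~\ref{existence-asymptotic-expansion-A} to $\widetilde A$, and undoing the transformation, which yields $A_t\leq \frac{2}{T-t}\,e^{\|\kappa\|T/(2\etamin)}\left[\etamax+\|\lambda\|T^2/3\right]$.

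A related misstep: you invoke \eqref{expA}, but that inequality is proved for the solution of the first equation in \eqref{Riccati-system-PVE}, whose driver is $2\lambda_t - A_t^2/(2\eta_t)$. The $A$ of Lemma~\ref{admissible-control} solves the Riccati equation of Section~\ref{sec-CVE}, which carries the additional term $\kappa_t A_t/(2\eta_t)$, so \eqref{expA} does not apply as stated. The analogous bound \eqref{eq:estim_discount_factor} is true, but it must be rederived: the paper uses the lower estimate of \eqref{asymptotic-expansion-A} for $\widetilde A$ together with $e^{-\int_t^s\kappa_r/(2\eta_r)\,dr}\leq 1$ (valid since $\kappa\geq 0$) to get $A_t\geq 2\etamin/(T-t)$, whence $X^*\in\cM_\alpha$. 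Incidentally, this quantitative lower bound also gives strict positivity of $A$ for free, so your monotone-approximation/Feynman--Kac argument in step (i), while workable, is superfluous: the lower bound you must prove anyway already subsumes it.
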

\begin{proof}
Let $\widetilde{A^\kappa_t}=A^\kappa_t e^{\int_0^t\frac{\kappa_s}{2\eta_s}\,ds}$. 
Due to Lemma \ref{existence-asymptotic-expansion-A} in the appendix, the following estimate holds for any $0 \leq t < T$:
\[
    \frac{1}{\bE\left[\left.\int_t^T\frac{1}{2\eta_s}e^{-\int_0^s\frac{\kappa_r}{2\eta_r}\,dr}\,ds\right|\mathcal{F}^0_t\right]}\leq {\widetilde{A^\kappa_t}}.
\]
Hence the process $A^\kappa_t$ is bounded from below by:
\begin{equation}\label{kappa-infty-A}
    \begin{split}
        A^\kappa_t \geq &~ \frac{e^{-\int_0^t\frac{\kappa_r}{2\eta_r}\,dr}}{\bE\left[\left.\int_t^T\frac{1}{2\eta_s}e^{-\int_0^s\frac{\kappa_r}{2\eta_r}\,dr}\,ds\right|\mathcal{F}^0_t\right]} = \frac{1}{\bE\left[\left.\int_t^T\frac{1}{2\eta_s}e^{-\int_t^s\frac{\kappa_r}{2\eta_r}\,dr}\,ds\right|\mathcal{F}^0_t\right]}
        \geq 
        2\etamin \frac{1}{(T-t)}.
    \end{split}
\end{equation}
Hence \eqref{expA} holds:
\begin{equation*}
e^{-\int_0^t\frac{A^\kappa_r}{2\eta_r}\,dr} \leq \exp\left(-2\etamin \int_0^t\frac{1}{2\eta_r(T-r)}\,dr \right)  \leq \left( \frac{T-t}{T} \right)^{\alpha}.
\end{equation*}
The conclusion on $X$ can be deduced immediately. Again from Lemma \ref{existence-asymptotic-expansion-A} in the appendix, ${\widetilde{A^\kappa}}$ is bounded from above:
\[
{\widetilde {A^\kappa_t}} \leq\frac{1}{(T-t)^2}\bE\left[\left.\int_t^T\left(2\eta_se^{\int_0^s\frac{\kappa_r}{2\eta_r}\,dr}+2(T-s)^2\lambda_se^{\int_0^s\frac{\kappa_r}{2\eta_r}\,dr}\right)\,ds\right|\mathcal{F}^0_t\right].
\]
Thus we get an upper bound on $A^\kappa$:
\begin{equation*}
    \begin{split}
        A^\kappa_t \leq & ~ \frac{e^{-\int_0^t\frac{\kappa_r}{2\eta_r}\,dr}}{(T-t)^2}\bE\left[\left.\int_t^T\left(2\eta_se^{\int_0^s\frac{\kappa_r}{2\eta_r}\,dr}+2(T-s)^2\lambda_se^{\int_0^s\frac{\kappa_r}{2\eta_r}\,dr}\right)\,ds\right|\mathcal{F}^0_t\right] \\
        \leq &~\frac{2}{(T-t)^2}\mathbb E\left[\left.\etamax e^{\int_0^T\frac{\kappa_r}{2\eta_r}\,dr}(T-t)+\frac{1}{3}\lambdamax e^{\int_0^T\frac{\kappa_r}{2\eta_r}\,dr}(T-t)^3\right|\mathcal F^0_t\right] \\
        \leq &~\frac{2}{(T-t)} e^{\frac{\kappamax T}{2\etamin}}\left[\etamax+\frac{\lambdamax T^2}{3}\right].
    \end{split}
\end{equation*}
Collecting all inequalities we get that $A^\kappa \in \cM_{-1}$ and
\begin{equation*}
    \begin{split}
        |\xi^*_t |=&~ \frac{A^\kappa_t |X_t|}{2\eta_t}  = |x| \frac{A^\kappa_te^{-\int_0^t\frac{A^\kappa_r}{2\eta_r}\,dr}}{2\eta_t}\\
        \leq&~\frac{|x|}{\etamin T^\alpha} \left[\etamax+\frac{\lambdamax T^2}{3}\right] e^{\frac{\kappamax T}{2\etamin}}\left( T-t \right)^{\alpha-1}.
    \end{split}
\end{equation*}
A similar inequality holds for $Y$. 
\end{proof}

It follows from the preceding lemma that $Y$ is a non-negative or non-positive supermartingale so the limit of $Y$ at the terminal time $T$ exists and is finite. Since $X \in \cM_\alpha$, we deduce that $\lim_{t\nearrow T} Y_t X_t = 0$. Moreover, the process $Z$ belongs to $L_{\bF^0}^p([0,T-]\times \Omega;\mathbb R)$  for any $p$.

The following theorem verifies that $\xi^*$ is optimal. The proof is the similar to Proposition \ref{verification-PVE}.
\begin{theorem}\label{verification-CVE}
Under Assumption \ref{ass-CVE} and if the initial value is deterministic, $\xi^*(=\mu^*)$ is {the unique} optimal control as well as the equilibrium to MFG \eqref{MFG-PVE}. Moreover the value function is given by:
\begin{equation} \label{eq:expression_value_fct}
   V(x;\mu^*)=\frac{1}{2}A^\kappa_0 x^2+\frac{1}{2}\bE\left[\int_0^T\kappa_s\mu^*_sX_s\,ds\right],
\end{equation}
and is non-negative.
\end{theorem}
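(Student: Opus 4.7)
The plan is to mirror the proof of Theorem \ref{verification-PVE}, taking advantage of the simplifications afforded by Assumption \ref{ass-CVE}. Since $\kappa,\eta,\lambda$ are $\bF^0$-progressive, the solution $(X^*,Y,Z)$ of the linear FBSDE \eqref{FBSDE-CVE} and therefore $\xi^*=Y/(2\eta)$ are $\bF^0$-progressive; hence the consistency requirement \eqref{consistency-condition-CVE} is automatic: $\mu^*=\bE[\xi^*|\cF^0]=\xi^*$. Admissibility reduces to $X^*_T=0$ (from Lemma \ref{admissible-control} together with $X^*\in\cM_\alpha$) and to $\xi^*\in L^2_\bF$, which follows from the standard a priori estimate on $Y$ obtained as in Lemma \ref{lem:BMO_martingale} applied to the truncated linear BSDE on $[0,T-\eps]$ and then passed to the limit.

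Next I would establish optimality. Fix $\xi\in\cA_\bF(t,x)$ with state process $X^\xi$. Using convexity of the Hamiltonian
\[
H(s,\xi,X,Y;\mu^*)=-\xi Y+\kappa_s\mu^*_s X+\eta_s\xi^2+\lambda_s X^2
\]
and the identity $2\eta_s\xi^*_s=Y_s$ (which makes $\partial_\xi H(s,\xi^*_s,X^*_s,Y_s;\mu^*)=0$), the difference of the truncated cost functionals on $[t,T-\eps]$ is bounded below by $\bE\bigl[\int_t^{T-\eps}\{(\kappa_s\mu^*_s+2\lambda_sX^*_s)(X^\xi_s-X^*_s)+(\xi_s-\xi^*_s)Y_s\}\,ds\,\big|\,\cF_t\bigr]$. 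Integration by parts applied to $(X^\xi_s-X^*_s)Y_s$ on $[t,T-\eps]$, whose stochastic integral has zero expectation since $Z\in L^2_\bF([0,T-])$ and $X^\xi-X^*$ is bounded up to $T-\eps$, reduces this lower bound to $\bE[Y_{T-\eps}(X^*_{T-\eps}-X^\xi_{T-\eps})|\cF_t]$. Letting $\eps\to 0$ makes the tail contribution of the cost vanish by dominated convergence, and the crucial terminal estimate
\[
\lim_{s\nearrow T}\bE[X^\xi_s Y_s\,|\,\cF_t]=0
\]
follows exactly as in the derivation of \eqref{T0}: since $Y=AX^*$ with $A\in\cM_{-1}$, Cauchy--Schwarz gives $|\bE[X^\xi_sY_s|\cF_t]|\le\tfrac{C}{T-s}\bE[(X^\xi_s)^2+(X^*_s)^2|\cF_t]$, and then $(X^\xi_s)^2=(\int_s^T\xi_u\,du)^2\le(T-s)\int_s^T\xi_u^2\,du$ whose $1/(T-s)$-weighted version vanishes as $s\nearrow T$ by $L^2$-admissibility, with an analogous bound for $X^*$. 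This yields $J(t,x,\xi;\mu^*)\ge J(t,x,\xi^*;\mu^*)$.

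For the value function formula \eqref{eq:expression_value_fct}, apply integration by parts to $X^*_sY_s=A_s(X^*_s)^2$ on $[t,T-\eps]$ and take $\bE[\cdot|\cF^0_t]$; using $Y_s=2\eta_s\xi^*_s$ and $\mu^*=\xi^*$ gives
\[
\bE[X^*_{T-\eps}Y_{T-\eps}|\cF^0_t]-A_t(X^*_t)^2=-\bE\biggl[\int_t^{T-\eps}\bigl(\kappa_s\mu^*_sX^*_s+2\lambda_s(X^*_s)^2+2\eta_s(\xi^*_s)^2\bigr)\,ds\,\bigg|\,\cF^0_t\biggr].
\]
The boundary term vanishes as $\eps\to0$ because $Y$ is a non-negative supermartingale with finite terminal limit (as remarked just before the theorem) and $X^*\in\cM_\alpha$ forces $X^*_T=0$, hence $\lim_{s\nearrow T}X^*_sY_s=0$. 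Rearranging and splitting $\bE\int\kappa_s\mu^*_sX^*_s\,ds$ into halves isolates $V(t,X^*_t;\mu^*)=\tfrac12 A_t(X^*_t)^2+\tfrac12\bE[\int_t^T\kappa_s\mu^*_sX^*_s\,ds|\cF^0_t]$, which is \eqref{eq:expression_value_fct}.

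The main obstacle, as in Theorem \ref{verification-PVE}, is controlling the singularity of $Y$ at $T$: every identity must be derived on a strictly interior interval $[t,T-\eps]$ and the terminal boundary terms controlled by combining the $\cM_{-1}$ bound on $A$ with the $L^2$-admissibility of $\xi$. The common-information setup removes the $B$-component and the conditioning inside $\mu^*$, but otherwise the machinery is identical.
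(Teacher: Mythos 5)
Your proposal is correct and follows essentially the same route as the paper, which itself only states that the proof is ``similar to that of Theorem \ref{verification-PVE}'': you reproduce that argument with $B\equiv 0$, using the convexity of the Hamiltonian, integration by parts on $[t,T-\eps]$, the estimate $\lim_{s\nearrow T}\bE[X^\xi_sY_s\,|\,\cF_t]=0$ via the $\cM_{-1}$ bound on $A$ and Cauchy--Schwarz, and the vanishing of $X^*_{T-\eps}Y_{T-\eps}$ from the supermartingale property of $Y$ noted just before the theorem. The only cosmetic slip is attributing the $L^2$-bound on $\xi^*=Y/(2\eta)$ to Lemma \ref{lem:BMO_martingale} (which concerns the $Z$-processes); the correct source is the truncated integration-by-parts identity $\bE[\int_t^{T-\eps}2\eta_s(\xi^*_s)^2\,ds\,|\,\cF^0_t]\le A_t(X^*_t)^2$, exactly as in the last step of the proof of Lemma \ref{induction-basis-eta}.
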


\subsubsection{Private initial portfolio}\label{sec:private-initial}

Let us now return to the problem \eqref{FBSDE-CVE}. Theorem \ref{main-result} implies there exists a unique soluton $(X,Y,Z)$ to \eqref{FBSDE-CVE}. From the solution to \eqref{FBSDE-CVE-CIV}, we deduce that
\[
	\mu^*_t = \frac{1}{2\eta_t}\bE [Y_t\big| \cF^0_t]= \frac{\bE[\mathcal X]}{2\eta_t}A^\kappa_te^{-\int_0^t\frac{A^\kappa_r}{2\eta_r}\,dr},
\]
where $A^\kappa$ solves the BSDE \eqref{Y-AX}. Since $Y$ is given by $Y=AX+B$, 
we obtain (see equation \eqref{Riccati-system-PVE} {in Section \ref{existence-verification}}) that
\begin{equation}\label{Riccati-system-PVE-bis}
\left\{\begin{aligned}
    -dA_t =&\left(2\lambda_t-\frac{A^2_t}{2\eta_t}\right)\,dt-Z^A_t\,dW^0_t, \quad A_T = +\infty\\
    -dB_t =&\left(\kappa_t \mu^*_t-\frac{A_tB_t}{2\eta_t}\right)\,dt-Z^B_t\,dW^0_t, \quad B_T= 0.
\end{aligned}\right.
\end{equation}
Note that $A$ and $B$ are $\bF^0$-adapted. 
Thereby we have an explicit solution: for $t \in [0,T]$
\begin{equation*}
\left\{\begin{aligned}
    B_t=&~\bE\left[\left.\int_t^T \kappa_s \mu^*_s e^{-\int_t^s (2\eta_r)^{-1}A_r\,dr}\,ds\right|\mathcal F^0_t\right], \\
	X_t=&~\mathcal Xe^{-\int_0^t (2\eta_r)^{-1}A_r\,dr}-\int_0^t(2\eta_s)^{-1}B_se^{-\int_s^t (2\eta_r)^{-1} A_r\,dr}\,ds, \\
	Y_t = & A_t X_t + B_t.
\end{aligned}\right.
\end{equation*}
Again from the general analysis of Section \ref{existence-verification}, the system \eqref{Riccati-system-PVE-bis} has a unique solution; similar arguments as in the proof of Proposition \ref{verification-PVE} can be applied to verify that the optimal state process for a given initial position $\mathcal X=x\in\mathbb R$ is given by:
\begin{equation}\label{position-private-initial}
X^{*,x}_t = (x-\bE[\mathcal X])e^{-\int_0^t (2\eta_r)^{-1}A_r\,dr} + \bE[\mathcal X] e^{-\int_0^t (2\eta_r)^{-1}A^\kappa_r\,dr} .
\end{equation}
Thus, if different players hold different initial portfolios, then a trader's optimal position consists of a weighted sum of the competitors' average portfolio size $\mathbb E[\mathcal X]$ and the deviation of the own initial position from that average.

\begin{remark}
By \cite[Theorem 2.4]{Pardoux1999} the unique solution $A^{\kappa,n}$ to the BSDE
\[
    -dA^{\kappa,n}_t=\left(2\lambda_t+\frac{\kappa_tA^{\kappa,n}_t}{2\eta_t}-\frac{(A^{\kappa,n}_t)^2}{2\eta_t}\right)\,dt-Z^{A^{\kappa,n}}_tdW^0_t, ~~A^{\kappa,n}_T=2n
\]
is increasing in $\kappa$. By Lemma \ref{lem:estim_H_n_1} in the appendix, this result carries over to the process $A^{\kappa}$. In particular, $A^\kappa \geq A$. Moreover $A^\kappa_0 > A_0$ if $\kappa > 0$ on some set of positive measure.
\end{remark}

The preceding remark shows that the dependence of the optimal portfolio process {on $\kappa$} decreases if $\mathbb E[\mathcal X]>0$. It also suggests that
- contrary to the previous case - the sign of the optimal portfolio process $X^*$ may change on the interval $[0,T]$. In fact, if $\bE[\mathcal X] > 0$ and $x \geq \bE[\mathcal X]$, then $X^{*,x}$ remains non-negative on $[0,T]$. However, if $0 < x < \zeta \bE[\mathcal X]$ where $\zeta :=1-\exp\left(\frac{A_0-A^\kappa_0}{ 2\etamax} t \right) >0$, then $X^{*,x}$ becomes negative shortly after the initial time; see also Figure \ref{fig:graph_state_bis} below.


\subsubsection{Constant cost coefficients}

In this section, we consider a deterministic benchmark example that can be solved explicitly. 
\begin{ass}
The processes $\lambda$, $\kappa$, $\eta$ are positive constants.
\end{ass}

Under the preceding assumption, the Riccati equation \eqref{Y-AX} reduces to
\begin{equation*}
    -dA^\kappa_t=\left(2\lambda+\frac{\kappa A^\kappa_t}{2\eta}-\frac{(A^\kappa_t)^2}{2\eta}\right)\,dt, ~~A^\kappa_T=\infty.
\end{equation*}
Its explicit solution is given by
\[
	A^\kappa_t=2\eta\gamma\coth\left(\gamma(T-t)\right)+\frac{\kappa}{2}
\]
where
\[
	\gamma:=\sqrt{\frac{\lambda}{\eta}+\frac{\kappa^2}{16\eta^2}}.
\]
If all players share the same initial portfolio (see Subsection \ref{sec:common-initial}), then {the optimal portfolio process is given by}
\begin{equation} \label{eq-det-MFG-optimal-X}
	X_t^*=\exp\left(-\frac{\kappa}{4\eta}t\right)\frac{\sinh(\gamma(T-t))}{\sinh(\gamma T)} x
\end{equation}
and the optimal liquidation rate is given by
\begin{align*}
	\xi^*_t&= \left(\gamma \coth(\gamma(T-t))+\frac{\kappa}{4\eta}\right)X_t^*\\
	&=\exp\left(-\frac{\kappa}{4\eta}t\right)\left(\frac{ \gamma\cosh(\gamma(T-t))}{\sinh(\gamma T)}+\frac{\kappa\sinh(\gamma(T-t))}{4\eta\sinh(\gamma T)}\right)x.
\end{align*}

When $\kappa\rightarrow 0$, then $\xi^*_t\rightarrow \frac{{\widetilde\gamma}\cosh({\widetilde\gamma}(T-t))}{\sinh({\widetilde\gamma} T)}x$ with ${\widetilde\gamma}=\sqrt{\frac{\lambda}{\eta}}$. This corresponds to the benchmark model in \cite{AC-2001}. This convergence can also be seen from Figure 1. Furthermore, we see that---as in the corresponding single player models---the optimal liquidation rate is always positive, i.e., round trips are not beneficial. Moreover, we notice that the portfolio process \eqref{eq-det-MFG-optimal-X} corresponds to the optimal portfolio process in an Almgren--Chriss model with adjusted risk aversion $\widetilde \lambda =\lambda +\frac{\kappa^2}{16\eta}$ and with additional exponential decay of rate $\frac{\kappa}{4\eta}$.

When $\kappa \to \infty$, then $\xi^*_0\rightarrow \infty$ while $\xi^*_t\rightarrow 0$ for $t>0$. That is, when the impact of interaction is very strong,
then the players trade very fast initially and very slowly afterwards. The intuitive reason is that in this case an individual player would benefit from trading fast slightly before his competitors start trading in order to avoid the negative drift generated by the mean-field interaction. As all the players are statistically identical, they ``coordinate'' on an equilibrium trading strategy as depicted in Figure 2. Thus, our model provides a possible explanation for large price increases or decreases in markets with strategically interacting players with similar preferences.

\begin{figure}
\centering
\includegraphics[width=7.0cm]{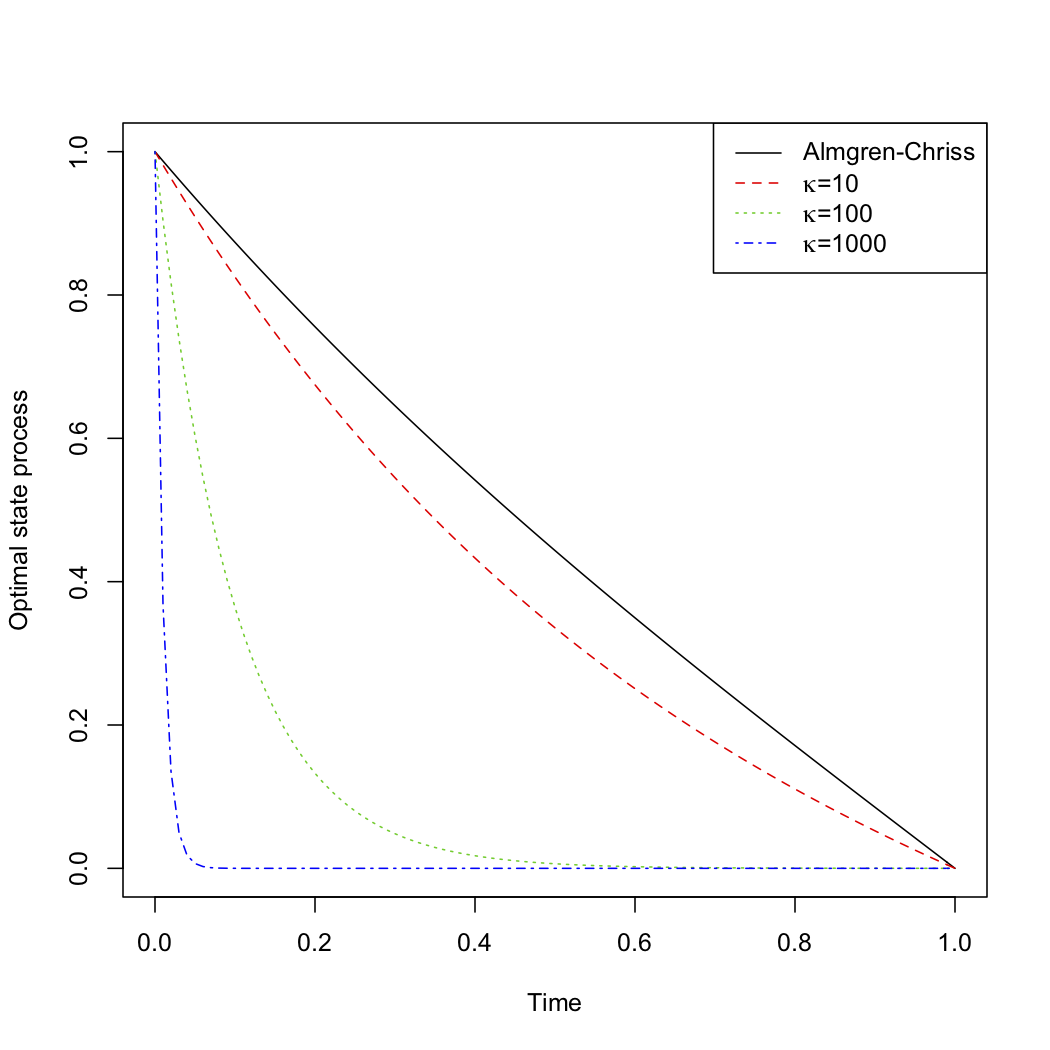}
\includegraphics[width=7.0cm]{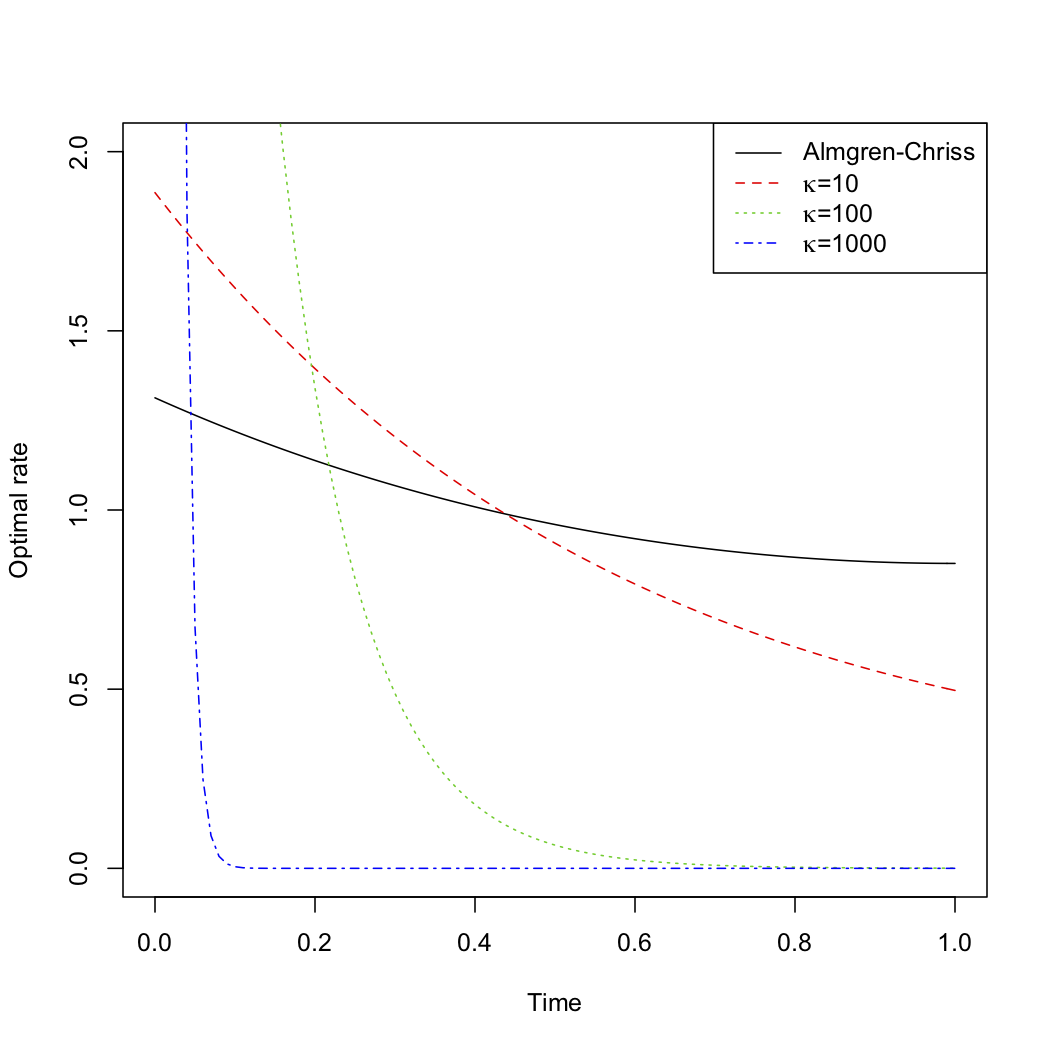}
\caption{Current state $X^*$ (left) and optimal liquidation rate $\xi^*$ (right) corresponding to parameters $T=1$, $x=1$, $\lambda=5$ and $\eta=5$. The solid line corresponds to $\kappa=0$, that is the Almgren-Chriss model with temporary impact.}
\label{fig:graph_rate}
\end{figure}

If the players hold different initial portfolios (Subsection \ref{sec:private-initial}), then \eqref{position-private-initial} shows that the optimal portfolio process is given by
\begin{equation*} 
X_t^{*,x}= (x-\bE[\mathcal X])\frac{\sinh(\widetilde \gamma(T-t))}{\sinh(\widetilde \gamma T)}+ \exp\left(-\frac{\kappa}{4\eta}t\right)\frac{\sinh(\gamma(T-t))}{\sinh(\gamma T)} \bE[\mathcal X].
\end{equation*}
Figure \ref{fig:graph_state_bis} confirms that the sign of $X^{*,x}$ is indeed changing when $x$ is small.

\begin{figure}
\centering
\includegraphics[width=7.5cm]{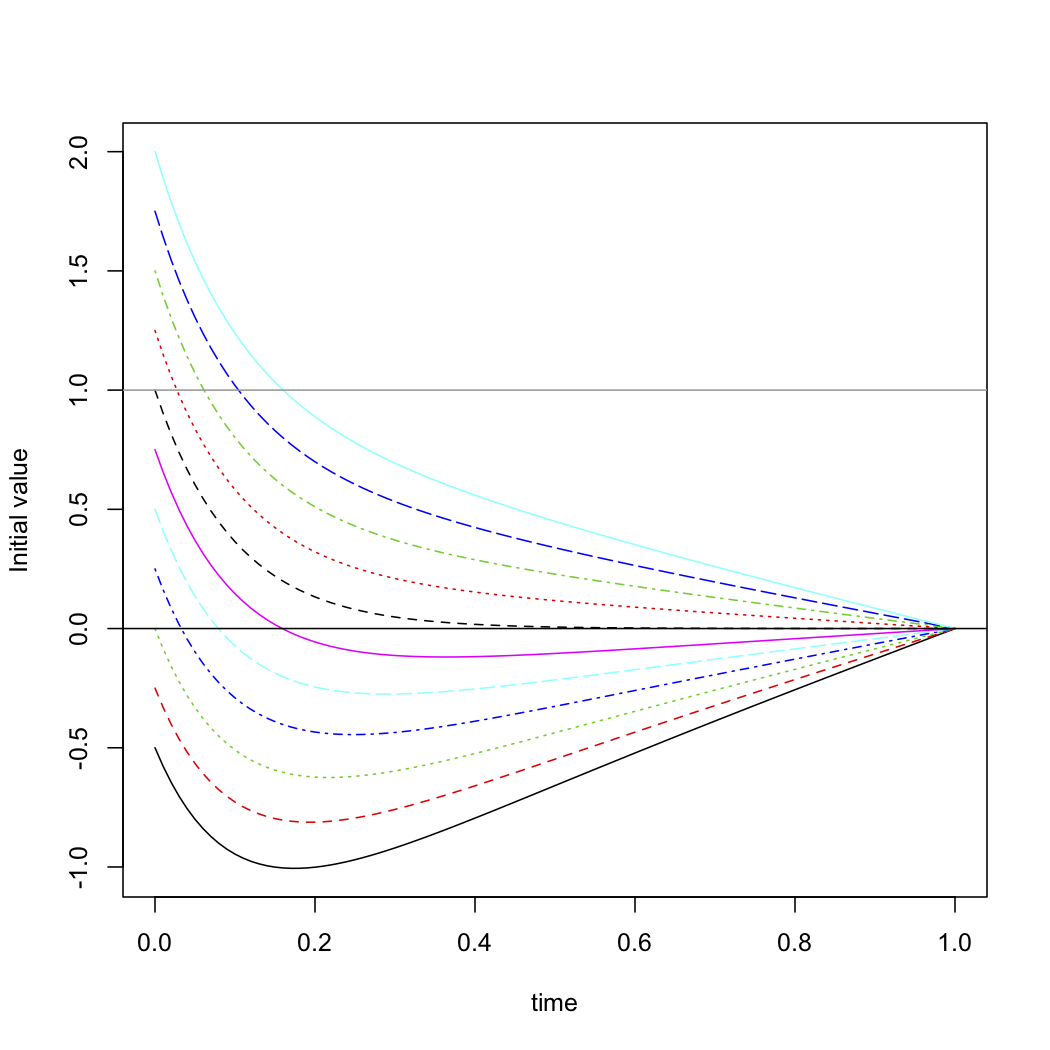}
\caption{Current state $X^{*,x}$ corresponding to parameters $T=1$, $\bE[\mathcal X]=1$, $\lambda=5$, $\eta=5$ and $\kappa=100$ for different values of the initial portfolio $x$.}
\label{fig:graph_state_bis}
\end{figure}




\section{Approximate Nash Equilibrium}
In this section we show that an $\epsilon$-Nash equilibrium for the $N$ player portfolio liquidation game can be constructed from the solution to the MFG \eqref{MFG-PVE} when the number of players is large if all players share the same cost structure.

\begin{ass}\label{ass-Epsilon-i}
Assume for any $i=1,\cdots, N$, $\kappa^i$, $\eta^i$ and $\lambda^i$ admit the following expression
\[
	\kappa^i_t=\kappa(t,\mathcal X^i,W^i_{\cdot\wedge t},W^0_{\cdot\wedge t}),\quad\eta^i_t=\eta(t,\mathcal X^i,W^i_{\cdot\wedge t},W^0_{\cdot\wedge t}),\quad\lambda^i_t=\lambda(t,\mathcal X^i,W^i_{\cdot\wedge t},W^0_{\cdot\wedge t})
\]
for some non-negative deterministic bounded and measurable functions $\kappa$, $\eta$ and $\lambda$.
\end{ass}

The next result is an adaptation to the Yamada-Watanabe result for FBSDE. The proof follows from the same arguments given in, e.g. \cite{carm:dela:18b} and \cite{Bahlali2007}.

\begin{lemma} \label{lem:yamada_watanabe}
There exists a measurable function $\Phi : \bR \times (\mathcal C[0,T])^2 \to  \mathcal H_\alpha  \times (\mathcal C[0,T-])^2$ such that for any $i=1,\cdots,N$
$$\left(X^i_t,Y^i_t,\int_0^t Z^{i} ds\right)_{0\leq t \leq T} = \Phi(\mathcal X^i,W^i,W^0),$$
where $\left(X^i,Y^i, Z^{i} \right)$ is the solution to FBSDE \eqref{conditional-MF-FBSDE} associated with $(W^0,\mathcal X^i,W^i,\kappa^i,\eta^i,\lambda^i)$.
In particular, there exists a function $\phi$ independent of $(\mathcal X^1,\cdots,\mathcal X^N,W^0,W^1,\ldots,W^N)$ such that
\begin{equation}\label{eq:approximate-nash}
    \xi^{*,i}=\phi(\mathcal X^i,W^0,W^i),
\end{equation}
where $\xi^{*,i}$ is an optimal control for agent $i$ associated with $(W^0,\mathcal X^i,W^i,\kappa^i,\eta^i,\lambda^i)$, given by \eqref{optimal-candidate-control-PVE}.

\end{lemma}

In view of the {above} lemma, under Assumption \ref{ass-Epsilon-i} each player's unique best response $\xi^{*,i}$ to the mean-field equilibrium $\mu^*$ can be represented in terms of the function $\phi$ as {in \eqref{eq:approximate-nash}}.
In particular, each individual action has the same distribution as the mean-field equilibrium:
\begin{equation}\label{action=equilibrium}
    \mu^*_t=\mathbb E[\xi^{*,i}_t|\mathcal F^0_t],\quad \textrm{a.s. a.e.}
\end{equation}
Proposition \ref{existence_MF_FBSDE} guarantees the existence of a constant $C$ such that
\begin{equation} \label{eq:L2_bound}
\bE\left[\int_0^T|\xi^{*,i}_t|^2\,dt \right] \leq C,
\end{equation}
and Lemma \ref{lem:yamada_watanabe} yields a real-valued function $\psi$, which is independent of $i$, such that
\begin{equation} \label{eq:expected_L2_bound}
\bE\left[\int_0^T|\xi^{*,i}_t|^2\,dt\bigg| \mathcal X^i=x^i \right]=\psi(x^i),
\end{equation}

Before we prove the main result of this section, we recall the cost functional $J^{N,i}\left(\vec{\xi}\right)$ from \eqref{cost-player-i}.

\begin{theorem}\label{epsilon-Nash-equilibrium}
Assume that Assumption \ref{ass-Epsilon-i} is satisfied and that the admissible control space for each player $i=1, \ldots, N$ is given by
$$\mathcal{A}^i:=\left\{\xi\in\mathcal A_{\mathbb F^i}(x^i): \bE\left[\int_0^T|\xi_t|^2\,dt\bigg| \mathcal X^i=x^i \right]\leq M(x^i) \right\}$$
for some fixed positive function $M$ such that $\psi \leq M$.
Then, for each $1\leq i\leq N$ and each $\xi^i\in\mathcal{A}^i$,
\[
    J^{N,i}\left(\vec{\xi^{*}}\right)\leq J^{N,i}(\xi^i, \xi^{*,-i})+O\left(\frac{1}{\sqrt N}\right),
\]
where {$\xi^{*,i}$ is given by \eqref{eq:approximate-nash}}, $(\xi^i, \xi^{*,-i})=(\xi^{*,1},\cdots,\xi^{*,i-1},\xi^{i},\xi^{*,i+1},\cdots,\xi^{*,N})$ and $O\left(\frac{1}{\sqrt N}\right)$ is to be interpreted as $\frac{g(x_i)}{\sqrt N}$ for some real-valued function $g$ independent of $i$.
\end{theorem}
\begin{proof}
By the symmetry of the $N$ player game, it is sufficient to show the result for Player $1$. We first estimate the following term:
\begin{equation*}
    \begin{split}
        &~\bE\left[\int_0^T\left(\mu^{*}_t-\frac{1}{N}\sum_{j=1}^N\xi^{*,j}_t\right)^2\,dt \Bigg| \mathcal X^1=x^1 \right] \\
         =&~\frac{1}{N^2}  \bE\left[\int_0^T \sum_{i\neq j} \left( \mu^{*}_t-\xi^{*,i}_t \right)\left( \mu^{*}_t-\xi^{*,j}_t \right) \,dt\Bigg| \mathcal X^1=x^1\right] +  \frac{1}{N^2}  \bE\left[\int_0^T \sum_{ j=1}^N \left( \mu^{*}_t-\xi^{*,j}_t \right)^2 \,dt\Bigg| \mathcal X^1=x^1\right] . 
    \end{split}
\end{equation*}
Using \eqref{eq:L2_bound} and \eqref{eq:expected_L2_bound}, the second term is bounded by $\dfrac{2(M(x^1)+(2N-1)C)}{N^2}$. For the first term, if $i,j \neq 1$, then the conditional expectation reduces to the expectation and since $\xi^{*,i}$ and $\xi^{*,j}$ are conditionally independent given $W^0$ for $i \neq j$,
\begin{eqnarray*}
\bE\left[\int_0^T  \left( \mu^{*}_t-\xi^{*,i}_t \right)\left( \mu^{*}_t-\xi^{*,j}_t \right) \,dt\Bigg| \mathcal X^1=x^1\right] =0, \quad i \neq j, ~~ i,j \neq 1.
\end{eqnarray*}
If $i=1\neq j$, then we see from Lemma \ref{lem:yamada_watanabe} that
\begin{eqnarray*}
\bE\left[\int_0^T  \left( \mu^{*}_t-\xi^{*,1}_t \right)\left( \mu^{*}_t-\xi^{*,j}_t \right) \,dt\Bigg| \mathcal X^1=x^1\right]
= \bE \left[ \int_0^T \Psi(t,x^1,W^1,W^0) \left( \mu^{*}_t-\xi^{*,j}_t \right) dt \right]
\end{eqnarray*}
for some real-valued function $\Psi$. Using again conditional independence and \eqref{action=equilibrium} we see that this term vanishes as well. As a result,
\begin{equation} \label{eq:estim_nash_equil}
    \bE\left[\int_0^T\left(\mu^{*}_t-\frac{1}{N}\sum_{j=1}^N\xi^{*,j}_t\right)^2\,dt \Bigg| \mathcal X^1=x^1\right]  \leq  \frac{2(M(x^1)+(2N-1)C)}{N^2} .
\end{equation}

We are now ready to the prove the $\epsilon$-equilibrium property of $\vec{\xi^{*}}$. By \eqref{eq:expected_L2_bound}, we have that $\xi^{*,1}\in\mathcal{A}^1$. For a given strategy $\xi\in\mathcal{A}^1$, let $X^\xi$ be the corresponding state process and let $J^1(\cdot;\mu^*)$ be Player 1's cost function when the average trading rate is replaced by the mean-field equilibrium. By Proposition \ref{verification-PVE}, $J^1(\xi;\mu^*)\geq J^1(\xi^{*,1};\mu^*)$, which implies:
\begin{equation*}
\begin{split}
&~J^{N,1}(\xi,\xi^{*,2},\cdots,\xi^{*,N})-J^{N,1}(\xi^{*,1},\cdots,\xi^{*,N})\\
\geq&~\bE\left[ \int_0^T\left(\kappa^1_t\left(\frac{1}{N}\sum_{j=2}^N\xi^{*,j}_t+\frac{1}{N}\xi_t\right)X^\xi_t+\eta^1_t\xi^2_t+\lambda^1_t (X^\xi_t)^2\right)\,dt\Bigg| \mathcal X^1=x^1\right] \\
&~-\bE\left[\int_0^T\left(\kappa^1_t\mu^{*}_t X^\xi_t+\eta^1_t (\xi_t)^2+\lambda^1_t (X^\xi_t)^2\right)\,dt\Bigg| \mathcal X^1=x^1\right]\\
&~+\bE\left[\int_0^T\left(\kappa^1_t\mu^{*}_t X^{*,1}_t+\eta^1_t(\xi^{*,1}_t)^2+\lambda^1_t (X^{*,1}_t)^2\right)\,dt\Bigg| \mathcal X^1=x^1\right] \\
&~-\bE\left[\int_0^T\left(\kappa^1_t\frac{1}{N}\sum_{j=1}^N\xi^{*,j}_tX^{*,1}_t+\eta^1_t(\xi^{*,1}_t)^2+\lambda^1_t (X^{*,1}_t)^2\right)\,dt\Bigg| \mathcal X^1=x^1\right]\\
:=&~I_1+I_2.
\end{split}
\end{equation*}
For the first difference $I_1$, using \eqref{eq:estim_nash_equil} we have that
\begin{equation*}
    \begin{split}
    & \sup_{\xi\in\mathcal{A}^1}|I_1| \\
     \leq&~\frac{\kappamax}{N}\sup_{\xi\in\mathcal{A}^1}\bE\left[\int_0^T|X^\xi_t||\xi_t|\,dt\Bigg|  \mathcal X^1=x^1\right] +\kappamax\sup_{\xi\in\mathcal A^1}\bE\left[\int_0^T|X^\xi_t|\left|\frac{1}{N}\sum_{j=2}^N\xi^{*,j}_t-\mu^{*}_t\right|\,dt\Bigg| \mathcal X^1=x^1\right]\\
     \leq&~\frac{\kappamax}{N}\sup_{\xi\in\mathcal{A}^1}\left(\bE\left[\int_0^T|X^\xi_t|^2\,dt\Bigg| \mathcal X^1=x^1\right]\right)^{\frac{1}{2}}\sup_{\xi\in\mathcal{A}^1}\left(\bE\left[\int_0^T|\xi_t|^2\,dt\Bigg| \mathcal X^1=x^1\right]\right)^{\frac{1}{2}}\\
     &~+\kappamax\sup_{\xi\in\mathcal A^1}\left(\bE\left[\int_0^T|X^\xi_t|^2\,dt\Bigg| \mathcal X^1=x^1\right]\right)^{\frac{1}{2}}\left(\bE\left[\int_0^T\left|\frac{1}{N}\sum_{j=1}^N\xi^{*,j}_t-\mu^{*}_t-\frac{1}{N}\xi^{*,1}_t\right|^2\,dt\Bigg| \mathcal X^1=x^1\right]\right)^{\frac{1}{2}}\\
    \leq&~ \frac{M(x^1)\kappamax T}{N}+\frac{2\kappamax T\sqrt{M(x^1)}}{N} \left( \sqrt{M(x^1)} + \sqrt{ 2(M(x^1)+(2N-1)C)}\right).
    \end{split}
\end{equation*}
For the second difference $I_2$, again using \eqref{eq:estim_nash_equil}, we have that
\begin{equation*}
    \begin{split}
    I_2\leq&~ \kappamax\left(\bE\left[\left.\int_0^T|X^{*,1}_t|^2\,dt\right| \mathcal X^1=x^1\right]\right)^{\frac{1}{2}}\left(\bE\left[\left.\int_0^T\left|\mu^{*}_t-\frac{1}{N}\sum_{j=1}^N\xi^{*,j}_t\right|^2\,dt\right| \mathcal X^1=x^1\right]\right)^{\frac{1}{2}}\\
    \leq&~\frac{2\kappamax T^{}\sqrt{M (x^1)} \sqrt{ (M(x^1)+(2N-1)C)}}{N}
    \end{split}
\end{equation*}
This proves the assertion.
\end{proof}

\section{Approximation by unconstrained MFGs}\label{sec:convergence}

In this section, we prove that the solution to our singular MFG can be approximated by the solutions to non-singular MFGs under additional assumptions on the market impact parameter. Specifically, we consider the following unconstrained MFGs:
\begin{equation}\label{eq-penalized-problem}
    \left\{\begin{aligned}
    1.&\textrm{ fix a process }\mu;\\
     2.&\textrm{ solve the standard optimization problem: minimize}\\
&J^n(\xi;\mu)=\bE\left[\int_0^T\left(\kappa_t\mu_tX_t+\eta_t\xi^2_t+\lambda_tX^2_t\right)\,dt+nX^2_T\Bigg| \mathcal X\right]\\
      &\textrm{  such that }       dX_t=-\xi_t\,dt,\quad X_0=\mathcal X;\\
	3. & \textrm{ search for the fixed point }
   \mu_t=\bE[\xi^{*,\mathcal X}_t|\cF^0_t],\textrm{ for }a.e.~ t\in[0,T].
    \end{aligned}\right.
\end{equation}

We will need the following assumption on the solution $A\in \cM_{-1}$ to the first equation in \eqref{Riccati-system-PVE} with the terminal condition $+\infty$. It implies in particular that $X^*\in\mathcal H_1$.

\begin{ass}\label{ass-app}
There exists a constant $C$ such that for any $0\leq r\leq s < T$
$$\exp\left(-\int_r^s \frac{A_u}{2\eta_u} du \right) \leq C \left( \frac{T-s}{T-r} \right) . $$
\end{ass}

The following result is proven in the appendix.

{
\begin{lemma} \label{rem:suff_cond_ass-app}
Assumption \ref{ass-app} holds under each of the following conditions:
\begin{itemize}
\item $\eta$ is deterministic;
\item $1/\eta$ is a positive martingale;
\item $1/\eta$ has uncorrelated multiplicative increments, namely for any $0\leq s \leq t$
$$\bE \left[ \frac{\eta_s}{\eta_t} \bigg| \cF_s \right] = \bE \left[ \frac{\eta_s}{\eta_t}  \right] .$$
\end{itemize}
\end{lemma}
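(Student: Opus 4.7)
The strategy is to produce, for each of the three cases, a deterministic function $\phi : [0,T) \to (0,\infty)$ with $\phi(u)$ comparable to $T-u$ and such that
$$\frac{A_u}{2\eta_u} \geq -\frac{\phi'(u)}{\phi(u)}, \qquad 0 \leq u < T.$$
Integrating this inequality would give $\int_r^s A_u/(2\eta_u)\,du \geq \ln(\phi(r)/\phi(s))$ and hence
$$\exp\left(-\int_r^s \frac{A_u}{2\eta_u}\,du\right) \leq \frac{\phi(s)}{\phi(r)} \leq C\,\frac{T-s}{T-r},$$
which is exactly Assumption \ref{ass-app}. The common starting point is the comparison lower bound
$$A_u \geq \frac{1}{\bE\left[\int_u^T (2\eta_s)^{-1}\,ds \,\big|\, \mathcal F_u\right]},$$
already used in the proof of Lemma \ref{admissible-control} and coming from Lemma \ref{existence-asymptotic-expansion-A} applied to \eqref{Riccati-system-PVE} (where the $\kappa$ term is absent from the $A$-equation). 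Thus the work reduces to identifying, in each case, the right-hand conditional expectation in closed form.

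If $\eta$ is deterministic, one takes $\phi(u) := \int_u^T (2\eta_s)^{-1}\,ds$; then $\phi'(u) = -(2\eta_u)^{-1}$, so the comparison bound reads precisely $A_u/(2\eta_u) \geq -\phi'(u)/\phi(u)$, and the two-sided estimate $\phi(u) \in [(T-u)/(2\etamax),\,(T-u)/(2\etamin)]$ yields $C = \etamax/\etamin$. If $1/\eta$ is itself an $\mathcal F$-martingale, then $\bE[1/\eta_s | \mathcal F_u] = 1/\eta_u$ for every $s \geq u$, the conditional expectation collapses to $(T-u)/(2\eta_u)$, and one obtains $A_u/(2\eta_u) \geq 1/(T-u)$ directly with $C=1$.

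The third case requires uncovering a hidden martingale. Setting $c(t) := \bE[1/\eta_t]$ and writing $1/\eta_s = (1/\eta_u)(\eta_u/\eta_s)$ for $s\geq u$, the assumption gives $\bE[1/\eta_s | \mathcal F_u] = (1/\eta_u)\,\bE[\eta_u/\eta_s]$; taking expectations and using $c(s) = \bE[1/\eta_s]$ identifies $\bE[\eta_u/\eta_s] = c(s)/c(u)$, so that
$$\bE\bigl[1/\eta_s \,\big|\, \mathcal F_u\bigr] = \frac{c(s)}{c(u)}\cdot\frac{1}{\eta_u},$$
which is equivalent to saying that $(1/\eta_t)/c(t)$ is a positive $\mathcal F$-martingale. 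The conditional expectation then becomes $(2\eta_u c(u))^{-1}\int_u^T c(s)\,ds$, and the random factor $1/\eta_u$ cancels when one forms $A_u/(2\eta_u)$. Taking $\phi(u) := \int_u^T c(s)\,ds$ recovers the template of the deterministic case, and the bounds $c(t) \in [1/\etamax,\,1/\etamin]$ again give $C = \etamax/\etamin$.

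The only genuine obstacle is spotting the cancellation in Case~3: once one realises that the ``uncorrelated multiplicative increments'' hypothesis is merely a disguised statement that $(1/\eta_t)/\bE[1/\eta_t]$ is a martingale, the remainder of the argument is a routine calculation parallel to the deterministic case. Cases~1 and~2 are direct substitutions, and none of the three cases requires the $\kappa$ or $\lambda$ coefficients, since the comparison bound on $A$ already absorbs the $\lambda$ term and $\kappa$ does not appear in the $A$-equation of \eqref{Riccati-system-PVE}.
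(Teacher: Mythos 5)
Your proof is correct and follows essentially the same route as the paper: both start from the lower bound on $A$ in Lemma \ref{existence-asymptotic-expansion-A}, rewrite the conditional expectation as $\int_u^T\bE[\eta_u/\eta_s\,|\,\mathcal F_u]\,ds$, use the uncorrelated-multiplicative-increments identity $\bE[\eta_u/\eta_s]=\bE[1/\eta_s]/\bE[1/\eta_u]$, and integrate the logarithmic derivative of $N_u=\int_u^T\bE[1/\eta_s]\,ds$ (your $\phi$). The only cosmetic differences are that you derive that identity directly rather than citing \cite[Lemma 5.1]{AJK-2014}, and you treat the martingale case by direct substitution (getting $C=1$) where the paper reduces it to the uncorrelated-increments case.
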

}
Using the same arguments as in Section \ref{sec:results}, the unconstrained control problem leads to the following conditional mean field FBSDE:
\begin{equation} \label{penalized_FBSDE2}
	\left\{\begin{aligned}
	dX^n_t=&~ \left(-\frac{A^n_t X^n_t+B^n_t}{2\eta_t}\right)\,dt, \\
	-dB^n_t =&~\left(-\frac{A^n_t B^n_t}{2\eta_t}+{\kappa_t} \bE\left.\left[\frac{A^n_t X^n_t+B^n_t}{2\eta_t}\right|\mathcal F^0_t\right]\right)\,dt-Z^{B^n}_t\,d\widetilde{W}_t, \\
dY^n_t=&~\left(-2\lambda_tX^n_t-\kappa_t\bE\left[\frac{A^n_tX^n_t+B^n_t}{2\eta_t}\bigg| \mathcal F^0_t\right]\right)\,dt+Z^{Y^n}_t\,d\widetilde W_t,\\
	 X^n_0 =&~\mathcal X,\\
	 B^n_T =&~0,\\
    Y^n_T=&~2nX^n_T,
	\end{aligned}\right.
\end{equation}
where
\begin{equation} \label{penalized_BSDE2}
	\left\{\begin{aligned}
	-dA^n_t&= \left\{2\lambda_t-\frac{(A^n_t)^2}{2\eta_t}\right\}\,dt-Z^{A^n}_t\,d\widetilde{W}_t, \\
	A^n_T&=2n.
	\end{aligned}\right.
\end{equation}
The existence of a solution $(A^n,Z^{A^n})$ to the BSDE \eqref{penalized_BSDE2}  can be deduced from Lemma \ref{lem:estim_H_n_1}. By the same lemma the sequence $\{A^n\}$ is a non-decreasing sequence converging pointwise to $A$ and there exists a constant $\fC>0$ such for any $n$,
\[
 \|A^n\|_{\cM_{-1}}  \leq \|A^n\|_{\cM^n_{-1}}\leq \fC,
\]
where the space $\cM^n_{l}$ is defined as
\[
    \cM^n_{l} := \left\{ U \in \cP_{\bF}([0,T]\times \Omega;\mathbb R\cup\{\infty\}): \ \left(T-.+\frac{\etamin}{n}\right)^{-l} U_\cdot \in L^\infty_\bF([0,T]\times \Omega;\mathbb R\cup\{\infty\}) \right\},
\]
and endowed with the norm
\[
	\|U\|_{\cM^n_l}:=\esssup_{(t,\omega)\times[0,T]\times\Omega}\frac{|U_t|}{\left(T-t+\frac{\etamin}{n}\right)^l}.
\]
We shall also need the following analogs to the space $\cH_\nu$:
\[
    \cH^n_{l}:=\left\{ U \in \cP_{\bF}([0,T]\times \Omega;\mathbb R\cup\{\infty\}): \ \left(T-.+\frac{\etamin}{n}\right)^{-l} U_\cdot \in S^2_\bF([0,T]\times \Omega;\mathbb R\cup\{\infty\}) \right\},
\]
endowed with the norm
\[
    \|U\|_{n,l}:=\left(\bE\left[\sup_{0\leq t\leq T} \left|\frac{U_t}{\left(T-t+\frac{\etamin}{n}\right)^{l}}\right|^2\right]\right)^{\frac{1}{2}}.
\]

The next result can be obtained using similar arguments as in the proof of Theorem \ref{existence_MF_FBSDE}. In fact, we have a slightly stronger result.
\begin{theorem}\label{existence-penalization}
Assume that Assumption \ref{ass-PVE} holds and that $\cX$ is a square integrable random variable.
Then, for any fixed $\fp\in[0,1]$ and $f\in L^2_{\bF}([0,T]\times\Omega;\mathbb R)$, there exists a unique solution $$(X^n,B^n,Y^n,Z^{B^n},Z^{Y^n})\in \mathcal H^n_{\alpha}\times \mathcal H^n_{\gamma}\times S^2_{\bF}([0,T]\times\Omega;\bR)\times L^2_{\bF}([0,T]\times\Omega;\mathbb R^m)\times L^2_{\bF}([0,T]\times\Omega;\mathbb R^m)$$ to the following FBSDE system:
\begin{equation}\label{conditional-MF-FBSDE-3-n}
\left\{\begin{aligned}
dX^n_t =&~-\frac{1}{2\eta_t} (A^n_t X^n_t + B^n_t)\,dt,\\
    -dB^n_t =&~\left(\kappa_t \fp \bE\left[\left.\frac{1}{2\eta_t}\left(A^n_tX^n_t+B^n_t\right)\right|\cF^0_t\right]+f_t-\frac{A^n_tB^n_t}{2\eta_t}\right)\,dt-Z^{B^n}_t\,d\widetilde{W}_t,\\
dY^n_t=&~\left(-2\lambda_tX^n_t-\kappa_t\fp \bE\left[\left.\frac{A^n_tX^n_t+B^n_t}{2\eta_t}\right|\mathcal F^0_t\right]-f_t\right)\,dt+Z^{Y^n}_t\,d\widetilde W_t,\\
X^n_0 =&~\mathcal X \\
B^n_T =&~ 0,\\
Y^n_T=&~2nX^n_T.
\end{aligned}
\right.
\end{equation}
\end{theorem}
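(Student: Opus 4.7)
The plan is to mimic, line by line, the three-step argument that established Theorem~\ref{existence_MF_FBSDE}, replacing the singular weight $(T-t)$ by its regularization $(T-t+\etamin/n)$ and the process $A$ by its penalized counterpart $A^n$. The two structural inputs that make this transcription work are, first, the uniform bound $\|A^n\|_{\cM^n_{-1}}\le\fC$ stated just after \eqref{penalized_BSDE2} and, second, the $n$-level discount factor estimate
\[
\exp\!\left(-\int_r^s\frac{A^n_u}{2\eta_u}\,du\right)\le C\left(\frac{T-s+\etamin/n}{T-r+\etamin/n}\right)^{\alpha},
\]
which is the analog of \eqref{expA} and follows from the uniform lower bound on $A^n$ supplied by Lemma~\ref{lem:estim_H_n_1}. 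Crucially both constants can be chosen independently of $n$, which is what will later enable the passage to the limit $n\to\infty$ in Section~\ref{sec:convergence}.

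With these two ingredients in hand, the first step is a transcription of the a priori Lemma~\ref{lem:BMO_martingale}, exploiting the same pointwise bound on the driver
\[
\left|\frac{A^n_sB^n_s}{2\eta_s}-\kappa_s\fp\,\bE\!\left[\left.\frac{A^n_sX^n_s+B^n_s}{2\eta_s}\right|\cF^0_s\right]-f_s\right|\le C\!\left[\frac{|B^n_s|}{T-s+\etamin/n}+\bE\!\left[\left.\frac{|X^n_s|}{T-s+\etamin/n}+|B^n_s|\,\right|\cF^0_s\right]+|f_s|\right]
\]
to control both $Z^{B^n}$ and $Z^{Y^n}$ in $L^2_{\bF}([0,T]\times\Omega;\bR^m)$ via Doob's maximal inequality; note that, unlike in the singular case, one may integrate all the way to $T$ because $A^n$ is bounded. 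The second step handles $\fp=0$: the forward equation is then a linear ODE in $X^n$ driven by $B^n$, and the backward equation for $B^n$ is linear with zero terminal condition, so both admit the explicit representations of Lemma~\ref{induction-basis-eta}. The H\"older and Doob arguments used there, combined with $\gamma<\alpha\wedge\tfrac12$ and the discount factor bound above, deliver $X^n\in\cH^n_\alpha$ and $B^n\in\cH^n_\gamma$; boundedness of $A^n$ places $Y^n=A^nX^n+B^n$ in $S^2_{\bF}([0,T]\times\Omega;\bR)$, and uniqueness is automatic from linearity.

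The third step is the continuation lemma. Given solvability at some $\fp\in[0,1]$, I would freeze a process $Y\in L^2_{\bF}$ inside the coupling term $\kappa\fd\,\bE[Y/(2\eta)|\cF^0]$, append this to the inhomogeneity, and invoke the induction hypothesis to obtain a well-defined map $Y\mapsto\widetilde Y$ on $L^2_{\bF}$. Applying It\^o's formula to $(\widetilde X^n-\widetilde X^{n,\prime})(\widetilde Y^n-\widetilde Y^{n,\prime})$ for two inputs $Y,Y'$ exactly as in Lemma~\ref{induction-step-eta}, the nonnegativity of $A^n$ produces the correct sign on the boundary term, and Assumption~\ref{ass-PVE} furnishes a $\theta>0$ that renders both coefficients $2\etamin-\|\kappa\|/(2\theta)$ and $2\lambdamin-\|\kappa\|\theta/2$ strictly positive. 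Thus for $\fd$ small enough, depending only on $\kappa,\eta,\lambda,T$ and \emph{not} on $n$ or $\fp$, the map is a contraction on $L^2_{\bF}$, and the weighted Doob/H\"older estimates from Lemma~\ref{induction-step-eta} upgrade the fixed point to the regularity $\cH^n_\alpha\times\cH^n_\gamma$. Since $\fd$ is uniform in $\fp$, at most $\lceil 1/\fd\rceil$ iterations carry solvability from $\fp=0$ to $\fp=1$.

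The main technical obstacle I foresee is exactly the uniform-in-$n$ character of the discount factor bound and of the $\cM^n_{-1}$-bound for $A^n$; without them the weight $(T-t+\etamin/n)$ could not play the role that $(T-t)$ did in the singular setting, and the step size $\fd$ in the continuation argument would deteriorate as $n\to\infty$. This is precisely where Lemma~\ref{lem:estim_H_n_1} is indispensable; once it is granted, the three-step construction above is essentially mechanical and yields the stated existence and uniqueness in the product space displayed in the theorem.
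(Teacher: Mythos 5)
Your proposal is correct and follows exactly the route the paper takes: the paper's own proof is a one-line reduction to Theorem \ref{existence_MF_FBSDE}, citing precisely the regularized discount-factor estimate $e^{-\int_s^t A^n_r/(2\eta_r)\,dr}\le\bigl((T-t+\etamin/n)/(T-s+\etamin/n)\bigr)^{\alpha}$ from Lemma \ref{lem:estim_H_n_1}, and your three-step transcription with the weight $(T-t+\etamin/n)$ and the uniform $\cM^n_{-1}$-bound on $A^n$ is exactly the intended expansion of that remark.
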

\begin{proof}
The proof is similar to that of Theorem \ref{existence_MF_FBSDE}. We only need to note that by Lemma \ref{lem:estim_H_n_1},
\[
    \left.e^{-\int_s^t\frac{A^n_r}{2\eta_r}\,dr}\leq \left( \frac{T-t+\frac{\etamin}{n}}{T-s+\frac{\etamin}{n}} \right)^\alpha\right..
\]
\end{proof}

In order to establish the convergence of the value functions of the unconstrained problems to the value function of the constrained problem we need a uniform norm estimate for the sequence $(X^n,B^n,Y^n)$.
\begin{lemma} \label{lem:estim_H_X_B_A_n}
Let Assumption \ref{ass-PVE}  hold.
There exists a constant $\overline{\fC}>0$ such that
\begin{equation}\label{uni-bound}
    \|X^n\|_{n,\alpha} + \|B^n\|_{n,\gamma}+\bE\left[\int_0^T|Y^n_t|^2\,dt\right]\leq \overline{\fC},
\end{equation}
for any $n$
where $(X^n,B^n,Y^n)$ is the unique solution to \eqref{penalized_FBSDE2}.
\end{lemma}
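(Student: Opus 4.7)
The plan is an energy argument based on integration by parts applied to $X^n_t Y^n_t$ on $[0,T]$, combined with the absorption of cross terms via the weak interaction condition in Assumption \ref{ass-PVE} and the uniform estimates on $A^n$ from Lemma \ref{lem:estim_H_n_1}. Once an $n$-independent $L^2$-bound on $\xi^n := Y^n/(2\eta)$ is obtained, the weighted bounds on $B^n$ and $X^n$ follow by repeating the H\"older/Doob argument of Lemma \ref{induction-basis-eta} with the regularized weight $(T-\cdot+\etamin/n)$ in place of $(T-\cdot)$.

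Concretely, from $Y^n_t = A^n_t X^n_t + B^n_t$ (so that the forward equation reads $dX^n_t = -\xi^n_t\,dt$) and from $A^n_T = 2n$, $B^n_T = 0$, we have $Y^n_T = 2nX^n_T$. Applying It\^o's formula to $X^n_t Y^n_t$ on $[0,T]$, the local martingale part vanishes in expectation since Theorem \ref{existence-penalization} guarantees $X^n \in S^2$ and $Z^{Y^n}\in L^2$ for each fixed $n$, and one obtains
\begin{align*}
2n\bE[(X^n_T)^2] + 2\bE\int_0^T\!\!\left(\eta_t(\xi^n_t)^2 + \lambda_t(X^n_t)^2\right)dt = x Y^n_0 - \bE\int_0^T\!\!\left(\kappa_t\fp X^n_t \bE[\xi^n_t|\cF^0_t] + f_t X^n_t\right)dt.
\end{align*}
A Young inequality with the parameter $\theta$ furnished by Assumption \ref{ass-PVE} absorbs the $\kappa$ cross term, and a further Young inequality handles $f_t X^n_t$. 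The boundary term splits as $x Y^n_0 = A^n_0 x^2 + x B^n_0$; the first piece is controlled by $\|A^n\|_{\cM_{-1}} \leq \fC$, while the second follows from the integrating-factor representation $B^n_0 = \bE\int_0^T e^{-\int_0^s A^n_r/(2\eta_r)\,dr}\big(\kappa_s\fp\bE[\xi^n_s|\cF^0_s] + f_s\big)ds$ of the linear BSDE for $B^n$, together with Cauchy-Schwarz and a further Young inequality absorbing $|xB^n_0|$ into the $L^2$-norm of $\xi^n$ on the LHS. The upshot is the $n$-independent estimate
\begin{align*}
2n\bE[(X^n_T)^2] + \bE\int_0^T\!\!(\xi^n_t)^2\,dt + \bE\int_0^T\!\!(X^n_t)^2\,dt \leq \overline{\fC}\Big(1+ \bE\int_0^T\!\!f_t^2\,dt\Big),
\end{align*}
which immediately yields the claimed $L^2$-bound on $Y^n = 2\eta\xi^n$.

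With the uniform $L^2$-bound on $g^n_s := \kappa_s\fp \bE[\xi^n_s|\cF^0_s] + f_s$ now in hand, the BSDE representation of $B^n$, the exponential estimate $e^{-\int_t^s A^n_r/(2\eta_r)\,dr}\leq \big((T-s+\etamin/n)/(T-t+\etamin/n)\big)^\alpha$ from Lemma \ref{lem:estim_H_n_1}, H\"older (exploiting $\gamma<1/2$), and Doob's maximal inequality produce $\|B^n\|_{n,\gamma}\leq C$. The forward formula $X^n_t = xe^{-\int_0^t A^n_r/(2\eta_r)\,dr} - \int_0^t B^n_s/(2\eta_s)\,e^{-\int_s^t A^n_r/(2\eta_r)\,dr}\,ds$ combined with the same exponential bound and $\gamma<\alpha$ then yields $\|X^n\|_{n,\alpha}\leq C(1+\|B^n\|_{n,\gamma})$. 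The main obstacle is the control of $Y^n_0$, whose sign is not a priori clear; closing the argument requires both the uniform $\cM_{-1}$ bound on $A^n$ \emph{and} the representation-based absorption of $B^n_0$ into the $L^2$-norm of $\xi^n$ on the LHS.
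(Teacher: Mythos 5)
Your argument is correct, but it is a genuinely different route from the paper's. The paper re-runs the continuation scheme used for the existence proof: a base case at $\fp=0$ (where $B^n$ depends only on $f$ and the explicit formulas give the weighted bounds, followed by the energy identity for $Y^n$), an induction step showing that if the bound $kR$ holds at level $\fp$ then a bound $KR$ holds at $\fp+\fd$ (by showing the map $\Gamma:Y^n\mapsto\widetilde Y^n$ is a $\tfrac12$-contraction mapping the $L^2$-ball $B^{L^2}_{2kR}(0)$ into itself), and finitely many iterations to reach $\fp=1$, $f=0$. You instead close a single global energy identity on $[0,T]$ with the full nonlinearity present — legitimate here precisely because the penalized problem has no singularity at $T$, so $Y^n_T=2nX^n_T$ is finite and the integration by parts extends to the terminal time. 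The two nonstandard ingredients you need, and correctly supply, are (i) the absorption of the mean-field cross term via conditional Jensen and the $\theta$-condition of Assumption \ref{ass-PVE} (the same absorption the paper performs for differences of solutions in Lemma \ref{induction-step-eta}), and (ii) the control of the boundary term $xY^n_0=A^n_0x^2+xB^n_0$, where the uniform $\cM_{-1}$-bound on $A^n$ handles the first piece and the linear-BSDE representation of $B^n_0$ plus Cauchy--Schwarz and Young lets you absorb the second into the $L^2$-norm of $\xi^n$ on the left; this absorption is exactly what lets you avoid the induction in $\fp$, since it breaks the circularity ($B^n$ depends on $\xi^n$ when $\fp>0$) that forces the paper into the continuation argument. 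Your route is shorter, gives the bound for all $\fp\in[0,1]$ simultaneously without tracking the growth of constants $kR\to KR$ through the iteration, and yields $2n\bE[(X^n_T)^2]\leq\overline{\fC}$ as an immediate by-product (which the paper only extracts later as a remark); the paper's route buys uniformity with the rest of the exposition and avoids having to justify the boundary-term absorption separately. The subsequent bootstrap $\|\xi^n\|_{L^2}\Rightarrow\|B^n\|_{n,\gamma}\Rightarrow\|X^n\|_{n,\alpha}$ via H\"older, Doob and the regularized exponential bound is the same in both proofs.
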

\begin{proof}
The proof is split into three steps.
\paragraph{Step 1.} When $\fp=0$ in \eqref{conditional-MF-FBSDE-3-n}, there exists $R\in\mathbb R$ independent of $n$ such that
\[
    \|X^n\|_{n,\alpha} + \|B^n\|_{n,\gamma}+\left(\bE\left[\int_0^T|Y^n_t|^2\,dt\right]\right)^{\frac{1}{2}}\leq R.
\]
This bound follows from modifications of arguments given in the proof of Lemma \ref{induction-basis-eta}. In fact,
\[
    \|B^n\|_{n,\gamma} \leq  \|B^n\|_{\gamma} \leq C \|f\|_{L^2} \leq R_1.
\]
Moreover,
\[
    |X^n_t|\leq \frac{|\mathcal X|(T-t+\frac{\etamin}{n})^\alpha}{(T+\frac{\etamin}{n})^\alpha}+C\int_0^t |B^n_s| \left( \frac{T-t+\frac{\etamin}{n}}{T-s+\frac{\etamin}{n}}\right)^\alpha\,ds.
\]
This implies $\|X^n\|_{n,\alpha} \leq R_2.$ Finally, {by analogy to the proof of Lemma \ref{induction-step-eta}, doing integration by part for $X^nY^n$, we have}
\[
    \bE\left[\int_0^T(Y^n_t)^2\,dt\right]\leq C\bE\left[\int_0^T(X^n_t)^2\,dt\right]+C\bE\left[\int_0^Tf_t^2\,dt\right] \leq R_3.
\]
\paragraph{Step 2.} Suppose that for some $\fp\in[0,1]$, the solution to \eqref{conditional-MF-FBSDE-3-n} satisfies
\[
    \|X^n\|_{n,\alpha} + \|B^n\|_{n,\gamma}+\left(\bE\left[\int_0^T|Y^n_t|^2\,dt\right]\right)^{\frac{1}{2}}\leq kR,
\]
for some $k\geq1$ independent of $n$. Then there exists $\fd>0$ independent of $\fp$ such that the solution  $(\widetilde X^n,\widetilde B^n,\widetilde Y^n)$ to \eqref{conditional-MF-FBSDE-3-n} with $\fp$ replaced by $\fp+\fd$ satisfies the same estimate for some $K>k$:
\begin{equation} \label{eq-step2}
    \|\widetilde X^n\|_{n,\alpha} + \|\widetilde B^n\|_{n,\gamma}+\left(\bE\left[\int_0^T|\widetilde Y^n_t|^2\,dt\right]\right)^{\frac{1}{2}}\leq KR.
\end{equation}
To prove this assertion, we introduce for any given $Y^n,f\in L^2_{\bF}([0,T]\times\Omega;\mathbb R)$ the FBSDE system
\begin{equation}\label{conditional-MF-FBSDE-4-n}
\left\{\begin{aligned}
d\widetilde X^n_t =&~-\frac{1}{2\eta_t} (A^n_t\widetilde X^n_t +\widetilde B^n_t)\,dt,\\
    -d\widetilde B^n_t =&~\left(\kappa_t \fp \bE\left[\left.\frac{1}{2\eta_t}\left(A^n_t\widetilde X^n_t+\widetilde B^n_t\right)\right|\cF^0_t\right]+\kappa_t \fd \bE\left[\left.\frac{Y^n_t}{2\eta_t}\right|\cF^0_t\right]+f_t-\frac{A^n_t\widetilde B^n_t}{2\eta_t}\right)\,dt-Z^{\widetilde B^n}_t\,d\widetilde{W}_t,\\
d\widetilde Y^n_t=&~\left(-2\lambda_t\widetilde X^n_t-\kappa_t\fp \bE\left[\left.\frac{A^n_t\widetilde X^n_t+\widetilde B^n_t}{2\eta_t}\right|\mathcal F^0_t\right]-\kappa_t\fd \bE\left[\left.\frac{Y^n_t}{2\eta_t}\right|\mathcal F^0_t\right]-f_t\right)\,dt+Z^{\widetilde Y^n}_t\,d\widetilde W_t,\\
\widetilde X^n_0 =&~\mathcal X, \\
\widetilde B^n_T =&~ 0,\\
\widetilde Y^n_T=&~2n\widetilde X^n_T.
\end{aligned}
\right.
\end{equation}
Arguing as in the proof of Theorem \ref{existence-penalization}, there exists a unique solution to \eqref{conditional-MF-FBSDE-4-n}. This defines a mapping
\[
    \Gamma: Y^n\rightarrow \widetilde Y^n.
\]
on $L^2_{\bF}([0,T]\times\Omega;\mathbb R)$. We now show the $\Gamma$ has a unique fixed point and that this fixed point belongs to $B^{L^2}_{2kR}(0)$, the subset of $L^2_{\bF}([0,T]\times\Omega;\mathbb R)$ such that the $L^2$-norm is bounded by $2kR$.

By the same arguments as in the proof as Lemma \ref{induction-step-eta} we have
\[
    \bE\left[\int_0^T|\Gamma(Y^n)(t)-\Gamma(\overline Y^{n})(t)|^2\,dt\right]\leq C\fd\bE\left[\int_0^T|Y^n_t-\overline Y^{n}_t|^2\,dt\right]\leq \frac{1}{4}\bE\left[\int_0^T|Y^n_t-\overline Y^{n}_t|^2\,dt\right],
\]
where $C$ does not depend on $n$ and $\fd$ is small enough but independent of $\fp$ and of $n$. Taking $\overline Y^{n}=0$, we have
\[
    \left(\bE\left[\int_0^T|\widetilde Y^n_t|^2\,dt\right]\right)^{\frac{1}{2}}\leq \frac{1}{2}\left(\bE\left[\int_0^T|Y^n_t|^2\,dt\right]\right)^{\frac{1}{2}}+\left(\bE\left[\int_0^T|\Gamma(0)(t)|^2\,dt\right]\right)^{\frac{1}{2}}.
\]
Note that $\Gamma(0)$ corresponds to the solution to \eqref{conditional-MF-FBSDE-3-n} with $\fp$. By assumption,
\[
    \left(\bE\left[\int_0^T|\Gamma(0)(t)|^2\,dt\right]\right)^{\frac{1}{2}}\leq kR.
\]
Thus, if we assume $Y^n\in B^{L^2}_{2kR}$,
\[
    \left(\bE\left[\int_0^T|\widetilde Y^n_t|^2\,dt\right]\right)^{\frac{1}{2}}\leq 2kR.
\]
This implies that $\Gamma$ is a mapping from $B^{L^2}_{2kR}(0)$ to itself. Since $B^{L^2}_{2kR}(0)$ is a Banach space the unique fixed point belongs to $B^{L^2}_{2kR}(0)$. This yields the desired $L^2$ estimate for $\widetilde Y$.

 Let $(\widetilde X^n,\widetilde B^n)$ be the solution corresponding to $\widetilde Y^n$ and $\fp+\fd$. Then, by H\"older's inequality,
 \begin{equation*}
    \begin{split}
        \frac{\left|\widetilde B^n_t\right|}{(T-t)^\gamma}\leq
        &~\frac{1}{(T-t)^\gamma}\bE\left[\left.\int_t^T\kappa_s(\fp+\fd)\bE\left[\left.\frac{|\widetilde Y^n_s|}{2\eta_s}\right|\mathcal F^0_s\right]\,ds\right|\mathcal F_t\right]\\
        \leq
        &~\frac{\kappamax}{2\etamin}\left(\bE\left[\left.\int_t^T\bE\left[\left.|\widetilde Y^n_s|^{\frac{1}{1-\gamma}}\right|\mathcal F^0_s\right]\,ds\right|\mathcal F_t\right]\right)^{1-\gamma}.
    \end{split}
 \end{equation*}
 Doob's maximal inequality yields that
 \begin{equation*}
    \begin{split}
        &~\bE\left[\sup_{0\leq t\leq T}\left|\frac{\widetilde B^n_t}{(T-t)^\gamma}\right|^2\right]\\
        \leq
        &~\frac{(\kappamax)^2}{4\etamin^2}\bE\left[\sup_{0\leq t\leq T}\left(\bE\left[\left.\int_t^T\bE\left[\left.|\widetilde Y^n_s|^{\frac{1}{1-\gamma}}\right|\mathcal F^0_s\right]\,ds\right|\mathcal F_t\right]\right)^{2(1-\gamma)}\right]\\
        \leq&~C\bE\left[\int_0^T|\widetilde Y^n_t|^2\,dt\right].
    \end{split}
 \end{equation*}
 Hence,
 \[
    \|\widetilde B^n\|_{n,\gamma}\leq C\left(\bE\left[\int_0^T|\widetilde Y^n_t|^2\,dt\right]\right)^{\frac{1}{2}}\leq CR
 \]
and
 \[
    \|\widetilde X^n\|_{n,\alpha}\leq C R.
 \]
 \paragraph{Step 3.} Since $\fd$ is independent of $\fp$, by iteration for only finitely many times, we have the solution for \eqref{penalized_FBSDE2} with $\fp=1$ and $f=0$ with the uniform estimate \eqref{uni-bound}.
\end{proof}

Under Assumption \ref{ass-app}, the value $\alpha$ appearing in the estimate of Theorem \ref{existence_MF_FBSDE} is equal to one. That is
\begin{equation}\label{est-X-app}
    \|X^*\|_1<\infty.
\end{equation}
This allows us to prove the convergence of the optimal position and control.

\begin{lemma}\label{app-control-position}
Let $(X,B,Y)$ be the solution to the FBSDEs \eqref{conditional-MF-FBSDE} and \eqref{conditional-MF-FBSDE_2} (Proposition \ref{existence_MF_FBSDE}). Under Assumption \ref{ass-PVE} and Assumption \ref{ass-app},
\[
 \lim_{n\to +\infty} \left\{  \bE\left[\int_0^T|X^n_t-X_t|^2\,dt\right]+    \bE\left[\int_0^T|B^n_t-B_t|^2\,dt\right]
+    \bE\left[\int_0^T|Y^n_t-Y_t|^2\,dt\right] \right\}=  0.
\]
\end{lemma}
\begin{proof}
Using the same arguments as in the proof of Lemma \ref{induction-step-eta}, we have for each $\epsilon>0$
\begin{equation}\label{eq:L2_estim_penal_approx}
\begin{split}
&~\bE\left[\int_0^{T-\epsilon}|Y^n_t-Y_t|^2\,dt\right]+\bE\left[\int_0^{T-\epsilon}|X^n_t-X_t|^2\,dt\right]\\
\leq&~C\bE\left[|(B^n_{T-\epsilon}-B_{T-\epsilon})(X^n_{T-\epsilon}-X_{T-\epsilon})|\right]+ C \bE\left[|(A^n_{T-\epsilon}-A_{T-\epsilon})X_{T-\epsilon}(X^n_{T-\epsilon}-X_{T-\epsilon})|\right].
\end{split}
\end{equation}
The two terms in the above summation admit the following estimates
\begin{equation*}
    \begin{split}
    &~\bE[|(B^n_{T-\epsilon}-B_{T-\epsilon})(X^n_{T-\epsilon}-X_{T-\epsilon})|]\\
    \leq&~C\bE[|B^n_{T-\epsilon}|^2]+C\bE[|B_{T-\epsilon}|^2]+C\bE[|X^n_{T-\epsilon}|^2]+C\bE[|X_{T-\epsilon}|^2]\\
    \leq&~C\left(\epsilon+\frac{1}{n}\right)^{2\gamma}\|B^n\|^2_{n,\gamma}+C\left(\epsilon+\frac{1}{n}\right)^{2}\|X^n\|^2_{n,{\alpha}}+C\epsilon^{2\gamma}\|B\|^2_{\gamma}+C\epsilon^2\|X\|^2_{1},
     \end{split}
\end{equation*}
respectively,
\begin{equation*}
    \begin{split}
    &~\bE[|(A^n_{T-\epsilon}-A_{T-\epsilon})X_{T-\epsilon}(X^n_{T-\epsilon}-X_{T-\epsilon})|]\\
    \leq&~C\bE\left[\sup_{0\leq t\leq T}\left|\frac{X_t}{T-t}\right|\left(\sup_{0\leq t\leq T}\frac{|X^n_t|}{\left(T-t+\frac{\etamin}{n}\right)^{{\alpha}}}\right)\left(\epsilon+\frac{1}{n}\right)^{{\alpha}}+\epsilon\sup_{0\leq t\leq T}\frac{|X_t|}{T-t}\right]  \quad(\textrm{by Lemma }\ref{existence-asymptotic-expansion-A}\textrm{ and Lemma }\ref{lem:estim_H_n_1})\\
    \leq&~C\left[\left(\epsilon+\frac{1}{n}\right)^{{\alpha}}+\epsilon\right](\|X^n\|^2_{n,\alpha}+\|X\|^2_{1})\\
    \leq&~C\left[\left(\epsilon+\frac{1}{n}\right)^{{\alpha}}+\epsilon\right] \quad(\textrm{by Lemma \ref{lem:estim_H_X_B_A_n} and \eqref{est-X-app}}).
    \end{split}
\end{equation*}
Letting $\epsilon$ go to zero in \eqref{eq:L2_estim_penal_approx}, by Theorem \ref{existence_MF_FBSDE} and Lemma \ref{lem:estim_H_X_B_A_n} we get
\[
    \bE\left[\int_0^{T}|Y^n_t-Y_t|^2\,dt\right]+\bE\left[\int_0^{T}|X^n_t-X_t|^2\,dt\right]\leq C\left(\frac{1}{n}\right)^{2\gamma}+C\left(\frac{1}{n}\right)^{2}+\frac{C}{n^\alpha}.
\]
Hence we obtain the desired limit for $(Y^n-Y)$ and $(X^n-X)$. By the expression for $B$, we have
\begin{equation*}
\begin{split}
    |B^n_t-B_t|\leq&~ \bE\left[\int_t^Te^{-\int_t^s\frac{A^n_r}{2\eta_t}\,dr}\kappa_s\bE\left[\frac{|Y^n_s-Y_s|}{2\eta_s}\bigg|\mathcal F^0_s\right]\,ds\bigg|\mathcal F_t\right]\\
    &~+\bE\left[\int_t^T\left|1-e^{-\int_t^s\frac{(A_r-A^n_r)}{2\eta_t}\,dr}\right|\kappa_s\bE\left[\frac{|Y_s|}{2\eta_s}\bigg|\mathcal F^0_s\right]\,ds\bigg|\mathcal F_t\right].
\end{split}
\end{equation*}
Let us recall that $\{A^n\}$ is a non-decreasing sequence converging to $A$. This leads to
\[
    \bE\left[\int_0^{T}|B^n_t-B_t|^2\,dt\right]\rightarrow 0.
\]
\end{proof}
Let us denote by $V^n(\mathcal X;\mu^n)$ the value function associated with the penalized problem \eqref{eq-penalized-problem}. The next theorem shows the convergence of $V^n(\mathcal X;\mu^n):=V^n(\mathcal X)$ to the value function $V(\mathcal X;\mu^*):=V(\mathcal X)$ associated with the constrained MFG.

\begin{theorem}
Under Assumption \ref{ass-PVE} and Assumption \ref{ass-app}, the value function $V^n(\mathcal X)$ converges to $V(\mathcal X)$ in $L^1(\Omega)$.
\end{theorem}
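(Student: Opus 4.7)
The plan is to derive closed-form expressions for $V^n(x)$ and $V(x)$ in the spirit of \eqref{value-function-PVE}, and then to pass to the limit term by term using the $L^2$ convergences of Lemma \ref{app-control-position} together with the pointwise monotone convergence $A^n\nearrow A$.

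To obtain the representations I would apply integration by parts to $X^n_t Y^n_t$ on $[0,T]$ and to $X^*_t Y^*_t$ on $[0,T-\varepsilon]$, take expectations (the stochastic integrals are true martingales by Lemma \ref{lem:BMO_martingale}), and let $\varepsilon\downarrow 0$ in the constrained case via \eqref{T0}. Using $Y\xi=2\eta\xi^2$ together with $X^n_T Y^n_T = 2n(X^n_T)^2$, which absorbs the penalty, this yields
\begin{align*}
V^n(x) &= \tfrac12\,x Y^n_0 + \tfrac12\,\bE\!\int_0^T \kappa_s\mu^n_s X^n_s\,ds,\\
V(x)   &= \tfrac12\,x Y^*_0 + \tfrac12\,\bE\!\int_0^T \kappa_s\mu^*_s X^*_s\,ds.
\end{align*}
Since $X^n\to X^*$ and $Y^n\to Y^*$ in $L^2([0,T]\times\Omega)$ by Lemma \ref{app-control-position} and the conditional expectation is an $L^2$-contraction, $\mu^n\to\mu^*$ in $L^2$ as well; combined with the uniform bounds of Lemma \ref{lem:estim_H_X_B_A_n}, the bound \eqref{est-X-app} and the boundedness of $\kappa$, a Cauchy--Schwarz splitting then gives convergence of the integral term.

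The crux therefore reduces to showing $Y^n_0\to Y^*_0$. I would decompose $Y^n_0=A^n_0\,x + B^n_0$ and $Y^*_0=A_0\,x + B^*_0$. The convergence $A^n_0\to A_0$ is immediate from the pointwise monotone convergence $A^n\nearrow A$ stated after \eqref{penalized_BSDE2} together with the finite bound $A_0\le C/T$ coming from $A\in\cM_{-1}$. For $B^n_0\to B^*_0$ I would use the explicit linear-BSDE solution formula
\[
B^n_0 \;=\; \bE\!\int_0^T \kappa_s\,\mu^n_s\,\exp\!\Bigl(-\!\int_0^s\tfrac{A^n_u}{2\eta_u}\,du\Bigr)\,ds,
\]
and its analogue for $B^*_0$. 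Splitting $B^n_0 - B^*_0$ into a term involving $(\mu^n-\mu^*)$ against the bounded discount factor (controlled by $\|\mu^n-\mu^*\|_{L^2}\to 0$) and a term involving the difference of discount factors against $\mu^*$ (controlled by dominated convergence, since $A^n\nearrow A$ forces pointwise convergence of the exponentials and $2\|\kappa\||\mu^*|\in L^1$ dominates), both pieces vanish.

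The main obstacle is precisely this $B^n_0\to B^*_0$ step: a direct BSDE stability comparison would have to handle the product $A^n B^n/\eta$ in the driver, which becomes singular as $n\to\infty$, and the $L^2$ convergence of Lemma \ref{app-control-position} does not deliver pointwise convergence at $t=0$ on its own. Switching to the explicit linear representation sidesteps this singularity, and Assumption \ref{ass-app} is used implicitly to guarantee $X^*\in\cH_1$ and hence $\mu^*\in L^2$, which furnishes the dominating function required in the dominated-convergence argument.
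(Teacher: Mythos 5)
Your argument is correct, but it is not the route the paper takes. The paper's proof is a short squeeze: since every liquidation strategy ($X_T=0$) is admissible for the penalized problem and incurs no penalty, $V^n\le V$; on the other hand $V^n(x)=J^n(\xi^n;\mu^n)\ge \bE\bigl[\int_0^T(\kappa_s\mu^n_sX^n_s+\eta_s(\xi^n_s)^2+\lambda_s(X^n_s)^2)\,ds\bigr]$, and this lower bound converges to $V(x)$ directly from the $L^2$ convergences of Lemma \ref{app-control-position} and the uniform bounds of Lemma \ref{lem:estim_H_X_B_A_n} --- essentially the Cauchy--Schwarz splitting you describe for the cross term, applied to the whole integrand. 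No closed-form representation of $V^n$ and no convergence of $Y^n_0$ is ever needed. Your route instead proves the identities $V^n(x)=\tfrac12 xY^n_0+\tfrac12\bE\int_0^T\kappa_s\mu^n_sX^n_s\,ds$ and its constrained analogue and passes to the limit term by term; the only genuinely new ingredient is $B^n_0\to B^*_0$, and your linear-BSDE-formula plus dominated-convergence argument does deliver it (the discount factors lie in $[0,1]$, $A^n\nearrow A$ gives pointwise convergence of the exponentials on $[0,T)$, and $\|\kappa\|\,|\mu^*|\in L^1$ dominates; note though that $\mu^*\in L^2$ already follows from $Y^*\in L^2$ in Theorem \ref{existence_MF_FBSDE}, so Assumption \ref{ass-app} enters only through Lemma \ref{app-control-position}, not where you say it does). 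What your longer route buys is strictly more information --- convergence of each component of the value function, in particular of the initial shadow price $Y^n_0=A^n_0x+B^n_0$ --- and it avoids the comparison $V^n\le V$, which the paper states across the two different equilibria $\mu^n$ and $\mu^*$ and which is the least transparent step of the published argument. What the paper's route buys is brevity and, as a by-product, the estimate $n\,\bE[(X^n_T)^2]\to 0$ recorded in the subsequent remark.
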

\begin{proof}
Let $(X^n,Y^n,B^n)$ be the solution in Theorem \ref{existence-penalization}. Recall that $\xi^{n,*}=\frac{Y^n}{2\eta}$ is the optimal strategy for the penalized problem with degree $n$ and the related optimal process $X^{n,*}$ is equal to $X^n$. Thus, with $\left(\mu^{n,*}=\mathbb E\left[\left.\frac{Y^n_t}{2\eta_t}\right|\mathcal F^0_t\right]\right)_{0\leq t\leq T}$ fixed, the optimal strategy $\xi^*$ for the constraint optimization is an admissible control for the penalized optimization. We denote by $X^*=X$ the optimal state process related to $\xi^*$ (Proposition \ref{verification-PVE} and Equation \eqref{eq:optimal_candidates}). 
Let us define
$$\Delta_n = \mathbb E\left[\int_0^
    T\kappa_s\mathbb E\left(\left.\frac{Y_s-Y^n_s}{2\eta_s}\right|\mathcal F^0_s\right)X^*_s\,ds\Bigg| \mathcal X \right].$$
From Lemma \ref{app-control-position}, $\displaystyle \lim_{n\to +\infty} \bE (|\Delta_n| )= 0.$
Recalling that
$$J(\mathcal X,\xi^{n,*};\mu^{n,*}) =   \bE\left[\int_0^{T}\left(\kappa_s\mu^{n,*} X^{n,*}_s+\eta_s(\xi^{n,*}_s)^2+\lambda_s(X^{n,*}_s)^2\right)\,ds\Bigg| \mathcal X \right],$$
we have
\begin{equation*} \label{eq:conv_val_fct_PVE}
    \begin{split}
    V(\mathcal X) = &~\bE\left[\int_0^{T}\kappa_s\mu^* X^*_s+\eta_s(\xi^*_s)^2+\lambda_s(X^*_s)^2\,ds\Bigg| \mathcal X \right]\\
    =&~\bE\left[\int_0^{T}\kappa_s\mu^{n,*} X^*_s+\eta_s(\xi^*_s)^2+\lambda_s(X^*_s)^2\,ds\Bigg| \mathcal X \right]+\Delta_n\\
  \geq&~  \bE\left[\int_0^{T}\left(\kappa_s\mu^{n,*} X^{n,*}_s+\eta_s(\xi^{n,*}_s)^2+\lambda_s(X^{n,*}_s)^2\right)\,ds + n(X^{n,*}_T)^2\Bigg| \mathcal X \right] +\Delta_n \\
     =&~V^n(\mathcal X)+\Delta_n 
   \geq  J(\mathcal X,\xi^{n,*};\mu^{n,*}) + \Delta_n.
     \end{split}
\end{equation*}
Hence we deduce that
$$V(\mathcal X) - J(\mathcal X,\xi^{n,*};\mu^{n,*})  \geq V(\mathcal X) - V_ n(\mathcal X) \geq \Delta_n,$$
thus
$$ |V(\mathcal X) - V_ n(\mathcal X)| \leq  |\Delta_n| + |V(\mathcal X) - J(\mathcal X,\xi^{n,*};\mu^{n,*}) | .$$
Again by Lemma \ref{app-control-position},
$$\lim_{n\to +\infty} \bE  |V(\mathcal X) - J(\mathcal X,\xi^{n,*};\mu^{n,*}) | = 0.$$

\end{proof}

\begin{remark}
As a by-product of the proof, we get that $\displaystyle \lim_{n \to +\infty} \bE \left[ n (X^{n,*}_T)^2\right] = 0$.
Moreover
\[
    |X^{n,*}_T|\leq \frac{C}{n}\left(|\mathcal X|+\sup_{0\leq t\leq T}\frac{|B^n_t|}{\left(T-t+\frac{\etamin}{n}\right)^\gamma}\right)\rightarrow 0\quad\textrm{ a.s.}.
\]
\end{remark}

The proof of convergence of the value function simplifies substantially under the common information assumption (Subsection \ref{sec:common-initial}). {In particular, Assumption \ref{ass-app} is not necessary here.} In this case,
$Y^n = A^n X^n$ where
\[
    -dA^n_t=\left(2\lambda_t+\frac{\kappa_tA^n_t}{2\eta_t}-\frac{(A^n_t)^2}{2\eta_t}\right)\,dt-Z^{A^n}_tdW^0_t, ~~A^n_T=2n
\]
and
\[
    dX^n_t=-\frac{A^n_t X^n_t}{2\eta_t}\,dt, \ X_0=x\in\mathbb R
\]
The optimal strategy and the resulting portfolio process are given by, respectively,
\begin{equation*}
    \xi^{n,*}_t=\mu^{n,*}_t = \frac{A^n_t X^n_t}{2\eta_t},\quad X_t^{n,*}=X^n_t=xe^{-\int_0^t\frac{A^n_r}{2\eta_r}\,dr}\qquad t\in[0,T].
\end{equation*}
Since the sequence $A^n$ is non-decreasing and converges to $A$, we deduce that $X^{n,*}$ converges to $X^*$ a.s. and that $\xi^{n,*}$ converges to $\xi^*$ a.e. a.s..
Moreover, for fixed $\mu^{n,*}$, $\xi^*$ is suboptimal to the penalized optimization. This implies that
\begin{equation*}
    \begin{split}
   &~ \bE\left[\left.\int_0^{T}\left(\kappa_s\xi^*_sX^*_s+\eta_s(\xi^*_s)^2+\lambda_s(X^*_s)^2\right)\,ds\right.\right]\\
   =&~\bE\left[\left.\int_0^{T}\left(\kappa_s\xi^{n,*}_sX^*_s+\eta_s(\xi^*_s)^2+\lambda_s(X^*_s)^2\right)\,ds\right.\right]+\mathbb E\left[\int_0^T\kappa_sX^*_s(\xi^*_s-\xi^{n,*}_s)\,ds\right]\\
     \geq&~ \bE\left[\left.\int_0^{T}\left(\kappa_s\xi^{n,*}_sX^{n,*}_s+\eta_s(\xi_s^{n,*})^2+\lambda_s(X_s^{n,*})^2\right)\,ds+n(X^{n,*}_T)^2\right.\right]+\mathbb E\left[\int_0^T\kappa_sX^*_s(\xi^*_s-\xi^{n,*}_s)\,ds\right]\\
   \geq&~ \bE\left[\left.\int_0^{T}\left(\kappa_s\xi^{n,*}_sX^{n,*}_s+\eta_s(\xi_s^{n,*})^2+\lambda_s(X_s^{n,*})^2\right)\,ds \right.\right]+\mathbb E\left[\int_0^T\kappa_sX^*_s(\xi^*_s-\xi^{n,*}_s)\,ds\right].
    \end{split}
\end{equation*}
For any $\epsilon > 0$, it holds
\begin{equation*}
    \begin{split}
\lim_{n\to +\infty} &~\bE\left[\left.\int_0^{T-\epsilon}\left(\kappa_s\xi^{n,*}_sX^{n,*}_s+\eta_s(\xi_s^{n,*})^2+\lambda_s(X_s^{n,*})^2\right)\,ds \right.\right]\\
=&~ \bE\left[\left.\int_0^{T-\epsilon}\left(\kappa_s\xi^{*}_sX^*_s+\eta_s(\xi^*_s)^2+\lambda_s(X^*_s)^2\right)\,ds \right.\right].
    \end{split}
\end{equation*}
Hence, the monotone convergence theorem implies
\begin{equation}\label{CVE-convergence-1}
    \begin{split}
    \lim_{n\rightarrow\infty}&~\bE\left[\left.\int_0^{T}\left(\kappa_s\xi^{n,*}_sX^{n,*}_s+\eta_s(\xi_s^{n,*})^2+\lambda_s(X_s^{n,*})^2\right)\,ds+n(X^{n,*}_T)^2\right.\right]\\
    \geq&~\bE\left[\left.\int_0^{T}\left(\kappa_s\xi^{*}_sX^*_s+\eta_s(\xi^*_s)^2+\lambda_s(X^*_s)^2\right)\,ds \right.\right].
    \end{split}
\end{equation}
{
Moreover,
\[
    |\kappa_sX^*_s(\xi^*_s-\xi^{n,*}_s)|\leq \kappamax|x||\xi^*_s-\xi^{n,*}_s|,
\]
which is $L^2$ bounded uniformly in $n$, due to Lemma \ref{lem:estim_H_X_B_A_n}. Vitali convergence implies
\begin{equation}\label{CVE-convergence-2}
    \lim_{n\rightarrow\infty}\mathbb E\left[\int_0^T\kappa_sX^*_s(\xi^*_s-\xi^{n,*}_s)\,ds\right]=0.
\end{equation}
The convergence \eqref{CVE-convergence-1} and \eqref{CVE-convergence-2} yields the desired result.

\begin{appendix}
\section{Appendix}

In this appendix we recall an existence of solutions result for a stochastic Riccati equation with singular terminal condition and prove  Lemma \ref{rem:suff_cond_ass-app}. We assume throughout that $\lambda$, $\eta$ and $1/\eta$ are bounded.

\subsection{Stochastic Riccati equations with singular terminal value}

\begin{lemma}\label{existence-asymptotic-expansion-A}\cite[Theorem 2.2]{AJK-2014}\cite[Theorem 6.1, Theorem 6.3]{GHS-2013}
In $S^2_{\bF}([0,T-]\times\Omega;\mathbb R)\times L^2_{\bF}([0,T-]\times\Omega;\mathbb{R}^{m})$ there exists a unique solution to
\begin{equation*}
\left\{\begin{aligned}
    -dA_t=&~\left(2\lambda_t-\frac{A_t^2}{2\eta_t}\right)\,dt-Z_t^A\,dW_t,\\
    A_T=&~\infty.
    \end{aligned}\right.
\end{equation*}
Moreover, there holds the following estimate
  \begin{equation}\label{asymptotic-expansion-A}
   \frac{1}{\bE\left[\left.\int_t^T\frac{1}{2\eta_s}\,ds\right|\mathcal{F}_t\right]}\leq A_t \leq\frac{1}{(T-t)^2}\bE\left[\left.\int_t^T2\eta_s+2(T-s)^2\lambda_s\,ds\right|\mathcal{F}_t\right].
\end{equation}
\end{lemma}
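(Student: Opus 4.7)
This result is cited from \cite{AJK-2014,GHS-2013}, so I outline the standard strategy, which rests on truncation together with explicit sub- and super-solutions. First I would solve, for each $n \in \mathbb N$, the BSDE with finite terminal value $A^n_T = n$. Since the driver $y \mapsto 2\lambda_t - y^2/(2\eta_t)$ is locally Lipschitz with the dissipative sign on $[0,\infty)$, standard quadratic BSDE theory yields a unique bounded solution $(A^n, Z^{A,n})$, and comparison gives $0 \leq A^n \leq A^{n+1}$. Set $A := \lim_n A^n$.

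For the upper bound, define
\[
\overline A_t := \frac{1}{(T-t)^2}\,\bE\left[\left.\int_t^T \bigl(2\eta_s + 2(T-s)^2\lambda_s\bigr)\,ds\,\right|\mathcal F_t\right].
\]
Representing the conditional expectation via the martingale representation theorem and applying It\^o's formula gives
\[
-d\overline A_t = \left(2\lambda_t + \frac{2\eta_t}{(T-t)^2} - \frac{2\overline A_t}{T-t}\right)dt - \overline Z_t\,dW_t,
\]
and completing the square,
\[
\frac{2\eta_t}{(T-t)^2} - \frac{2\overline A_t}{T-t} + \frac{\overline A_t^2}{2\eta_t} = \frac{1}{2\eta_t}\left(\overline A_t - \frac{2\eta_t}{T-t}\right)^{\!2} \geq 0,
\]
shows that $\overline A$ is a supersolution. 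Comparing $A^n$ with $\overline A$ on $[0, T-1/n]$, where both sides are finite and $A^n_{T-1/n} \leq n \leq \overline A_{T-1/n}$ for $n$ large (since $\overline A_t \asymp 1/(T-t)$), produces $A^n \leq \overline A$ uniformly, hence $A \leq \overline A$ by monotone convergence. For the lower bound, let $\Phi_t := \bE[\int_t^T (2\eta_s)^{-1}\,ds\,|\,\mathcal F_t]$ and $\underline A_t := 1/\Phi_t$. Since $d\Phi_t = -(2\eta_t)^{-1}\,dt + \widetilde\zeta_t\,dW_t$, It\^o's formula yields
\[
-d\underline A_t = \left(-\frac{\underline A_t^2}{2\eta_t} - \underline A_t^3|\widetilde\zeta_t|^2\right)dt + \underline A_t^2\widetilde\zeta_t\,dW_t,
\]
and $-\underline A_t^3|\widetilde\zeta_t|^2 \leq 0 \leq 2\lambda_t$ shows that $\underline A$ is a subsolution with $\underline A_T = \infty$. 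A parallel comparison argument against $A^n$, after an inner approximation clipping $\underline A$ at level $n$, yields $A \geq \underline A$.

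Finally, the two-sided estimate places $A$ in $L^2_\bF(\Omega;C[0,T-])$. Standard stability for BSDEs on each sub-interval $[0,T-\eps]$ then yields convergence of $(Z^{A,n})$ in $L^2_\bF([0,T-\eps];\mathbb R^m)$ to some $Z^A$ with $(A,Z^A)$ solving the BSDE on $[0,T)$, while the lower bound forces $A_t \to \infty$ as $t \uparrow T$. Uniqueness follows because any adapted solution in the stated space must lie between $\underline A$ and $\overline A$ and, by comparison with each $A^n$, must agree with the monotone limit. The main obstacle is justifying comparison at the singular boundary: I would handle this by working on $[0,T-\eps]$ and using the sharp rates $\underline A_t,\overline A_t \asymp 1/(T-t)$ provided by the boundedness of $\eta$ and $1/\eta$ to dominate the boundary contributions as $\eps \downarrow 0$.
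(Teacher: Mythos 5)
The paper offers no proof of this lemma---it is imported verbatim from the cited references---so the benchmark is the argument in \cite{AJK-2014,GHS-2013}, and your outline is exactly that argument: truncate the terminal value, use monotonicity in $n$, and trap the limit between an explicit subsolution and supersolution. Your It\^o computations for $\overline A$ and $\underline A$ are correct, and the completion of the square identifying $\overline A$ as a supersolution is the right identity.

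The one step that does not close as written is the comparison at the interior time $T-1/n$. You assert $A^n_{T-1/n}\leq n\leq\overline A_{T-1/n}$, but neither inequality is automatic: $A^n$ need not be monotone in $t$ (comparison with the driver $2\lambda$ only gives $A^n_t\le n+\bE[\int_t^T2\lambda_s\,ds\,|\,\mathcal F_t]$), and $\overline A_{T-1/n}\ge 2\etamin n$ exceeds $n$ only when $\etamin\ge1/2$. The standard repair---used in the references and mirrored in this paper's own Lemma~\ref{lem:estim_H_n_1} for the penalized equation---is to avoid interior comparison altogether: dominate the quadratic driver pointwise by the linear-in-$a$ driver $g^n(t,a)=2\lambda_t-\frac{2a}{T-t+c/n}+\frac{2\eta_t}{(T-t+c/n)^2}$ (again by completing the square), solve the linear BSDE with this driver and the \emph{same} finite terminal value explicitly, compare on all of $[0,T]$ where both terminal values are finite and equal, and then let $n\to\infty$; the explicit solutions converge to your $\overline A$. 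The lower bound is handled symmetrically with the subsolution $\underline A^n_t=\bigl(\tfrac{1}{n}+\bE[\int_t^T(2\eta_s)^{-1}\,ds\,|\,\mathcal F_t]\bigr)^{-1}$, whose terminal value is exactly $n$, so no ``clipping'' at the singular boundary is needed and $\underline A^n\uparrow\underline A$. With that substitution your proof is complete and coincides with the cited one.
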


\begin{corollary}\label{cor-appendix}
The BSDE \eqref{Y-AX}
\begin{equation*}
\left\{\begin{aligned}
	-dA_t=& ~\left(2\lambda_t+\frac{\kappa_tA_t}{2\eta_t}-\frac{A_t^2}{2\eta_t}\right)\,dt-Z^A_tdW^0_t, \\
	A_T=\ & ~\infty.
   \end{aligned}\right.
\end{equation*}
has a unique solution.
\end{corollary}
\begin{proof}
Let $\widetilde{A}_t=A_t e^{\int_0^t\frac{\kappa_s}{2\eta_s}\,ds}$. Then,
\begin{equation}\label{Y-AX-2}
\left\{\begin{aligned}
    -d\widetilde{A}_t=& \left[2\lambda_te^{\int_0^t\frac{\kappa_s}{2\eta_s}\,ds}-\frac{\widetilde{A}^2_t}{2\eta_te^{\int_0^t\frac{\kappa_s}{2\eta_s}\,ds}}\right]\,dt-\widetilde{Z}_t\,dW^0_t, \\
    \widetilde{A}_T=& \infty.
       \end{aligned}\right.
\end{equation}
Hence, the assertion follows from the preceding lemma.
\end{proof}

\begin{lemma} \label{lem:estim_H_n_1}
For each $n$, there exists a unique solution $A^n$ to the BSDE
\begin{equation}\label{appendix-An}
\left\{\begin{aligned}
    -dA^n_t=&~\left(2\lambda_t-\frac{(A^n_t)^2}{2\eta_t}\right)\,dt-Z_t^{A^n}\,dW_t,\\
    A^n_T=&~2n.
    \end{aligned}\right.
\end{equation}
\[
    A^n_t\geq \frac{1}{\frac{1}{2n}+\bE\left[\left.\int_t^T\frac{1}{2\eta_s}\,ds\right|\mathcal F_t\right]}.
\]
Moreover, the sequence $A^n$ is non-decreasing and converges to $A$. There exists a constant $\fC$ such that for any $n$:
$$\|A^n\|_{\cM_{-1}} +\|A^n\|_{\cM^n_{-1}} \leq \fC.$$
\end{lemma}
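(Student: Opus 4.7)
The plan is to establish all four claims through four linked steps. Existence and uniqueness of $A^n$ follow from standard BSDE theory: the terminal value $2n$ is bounded and the driver $(y,z)\mapsto 2\lambda_t-y^2/(2\eta_t)$ is $z$-independent, continuous, and monotone decreasing in $y\geq 0$. Truncation of the quadratic term in $y$ (or Kobylanski-type results) combined with the classical comparison theorem yields a unique bounded continuous solution. Comparison with the zero subsolution (valid since $\lambda\geq 0$ and $2n\geq 0$) gives $A^n\geq 0$; combined with continuity and the lower bound derived below, $A^n$ stays strictly positive on $[0,T]$.

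For the lower bound, I apply It\^o's formula to $\psi^n_t:=1/A^n_t$ (regularising by $1/(A^n+\delta)$ and sending $\delta\to 0$ if needed) to obtain
\[
    \psi^n_t = \frac{1}{2n} + \bE\left[\int_t^T \left(\frac{1}{2\eta_s} - \frac{2\lambda_s}{(A^n_s)^2} - \frac{(Z^{A^n}_s)^2}{(A^n_s)^3}\right)\,ds \,\bigg|\, \cF_t\right].
\]
Discarding the non-positive terms gives $\psi^n_t\leq 1/(2n)+\bE[\int_t^T (2\eta_s)^{-1}\,ds\,|\,\cF_t]$, which inverts to the claimed lower bound on $A^n$. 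The stochastic-integral term is a genuine martingale once one has the upper bound of the next step, which keeps $A^n$ bounded away from $0$ on each $[0,T-\epsilon]$.

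For the uniform upper bound, I construct the explicit deterministic supersolution $U_t:=\fC/(T-t+\etamin/n)$ and check that $U_T\geq 2n$ (requiring $\fC\geq 2\etamin$) and $-dU_t\geq(2\lambda_t-U_t^2/(2\eta_t))\,dt$; the latter reduces to
\[
    \fC + \frac{\fC^2}{2\|\eta\|} \geq 2\|\lambda\|\left(T-t+\frac{\etamin}{n}\right)^2,
\]
which holds uniformly in $n\geq 1$ and $t\in[0,T]$ for $\fC$ sufficiently large, since $T-t+\etamin/n\leq T+\|\eta\|$. The BSDE comparison theorem (valid on subintervals, where the driver is Lipschitz on bounded sets) yields $A^n\leq U$, proving $\|A^n\|_{\cM^n_{-1}}\leq\fC$; and since $T-t+\etamin/n\geq T-t$, the same estimate gives $\|A^n\|_{\cM_{-1}}\leq\fC$.

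Finally, BSDE comparison gives $A^n\leq A^{n+1}$ because $2n\leq 2(n+1)$, so $\widetilde A:=\lim_n A^n$ exists pointwise and is bounded by $\fC/(T-t)$ uniformly in $n$. Passing to the limit in the Step 2 bound yields $\widetilde A_t\geq 1/\bE[\int_t^T(2\eta_s)^{-1}\,ds\,|\,\cF_t]\to\infty$ as $t\to T$, while dominated convergence on each $[0,T-\epsilon]$ (using $A^n\leq\fC/\epsilon$ uniformly in $n$) together with standard $L^2$-stability of BSDEs lets me pass to the limit in the BSDE for $A^n$, showing that $\widetilde A$ solves the BSDE of Lemma~\ref{existence-asymptotic-expansion-A}. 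The uniqueness therein identifies $\widetilde A=A$, completing the proof. The main obstacle is the rigorous treatment of the singularity at $t=T$, in particular justifying the inversion $\psi^n\mapsto A^n$ (which requires strict positivity) and identifying the monotone limit $\widetilde A$ with the singular solution $A$; both are handled via the explicit a priori bounds constructed above.
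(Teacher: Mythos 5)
Your proof is correct in substance but takes a genuinely different route from the paper's. The paper outsources existence, uniqueness, the lower bound and the monotone convergence $A^n\nearrow A$ to \cite[Proposition 3.1, Theorem 3.2]{AJK-2014} (which establish exactly these claims, the lower bound via the same transformation $\psi^n=1/A^n$ that you use), and devotes its entire written proof to the uniform $\cM^n_{-1}$ bound. There it does not build a supersolution: it dominates the quadratic driver by completing the square, $2\lambda_t-\frac{a^2}{2\eta_t}\leq 2\lambda_t-\frac{2a}{T-t+\etamin/n}+\frac{2\eta_t}{(T-t+\etamin/n)^2}$, compares $A^n$ with the solution $\Psi^n$ of the resulting \emph{linear} BSDE, and bounds $\Psi^n$ through the explicit linear-BSDE solution formula; this keeps $\eta$ and $\lambda$ inside conditional expectations rather than replacing them by their essential bounds. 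Your deterministic supersolution $U_t=\fC/(T-t+\etamin/n)$ is more elementary and avoids the linear solution formula, at the cost of a cruder worst-case constant; both arguments yield the lemma. Two small repairs are needed in yours. First, the supersolution inequality $-U'(t)\geq 2\lambda_t-U_t^2/(2\eta_t)$ reduces to $\frac{\fC^2}{2\eta_t}-\fC\geq 2\lambda_t\left(T-t+\frac{\etamin}{n}\right)^2$, so the linear term carries a minus sign, not the plus sign you wrote; the conclusion survives because the quadratic term dominates for $\fC$ large, but the displayed condition as stated is weaker than what you must verify. Second, applying It\^o to $1/A^n$ requires strict positivity of $A^n$ \emph{before} the lower bound is available; invoking ``the lower bound derived below'' is circular, and the upper bound does not keep $A^n$ away from zero. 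A clean fix is to first compare $A^n$ with the deterministic subsolution $\bigl(\frac{1}{2n}+\frac{T-t}{2\etamin}\bigr)^{-1}$ (or to carry out your $\delta$-regularisation as the primary argument), after which the stochastic integral $\int Z^{A^n}(A^n)^{-2}\,dW$ is a true martingale since $A^n$ is bounded above and below for fixed $n$.
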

\begin{proof}
The first and second assertions are results of \cite[Proposition 3.1,Theorem 3.2]{AJK-2014}, respectively. For any $t$, $n$ and $a$, we have
$$2\lambda_t-\frac{a^2}{2\eta_t} \leq 2\lambda_t -\frac{2}{\left( T-t+\frac{\etamin}{n}\right)} a + \frac{2\eta_t}{\left( T-t+\frac{\etamin}{n}\right)^2} = g(t,a).$$
Let us denote by $\Psi^n$ the solution of the BSDE with generator $g$ and terminal condition $2n$. By the comparison principle for BSDEs, we have $A^n_t \leq \Psi^n_t$ and by the solution formula for linear BSDEs,
\begin{equation*}
    \begin{split}
\Psi^n_t = &~\left( \frac{T+\frac{\etamin}{n}}{T-t+\frac{\etamin}{n}} \right)^2 \bE \left[ \left( \frac{\frac{\etamin}{n}}{T+\frac{\etamin}{n}} \right)^2 2n  + \int_t^T \left( \frac{T-s+\frac{\etamin}{n}}{T+\frac{\etamin}{n}} \right)^2\left.  \left( \frac{2\eta_s}{\left( T-s+\frac{\etamin}{n}\right)^2} +2\lambda_s \right)  \right| \cF_t  \right] \\
 = &~\frac{2\etamin^2}{n} \frac{1}{\left( T-t+\frac{\etamin}{n} \right)^2}
+ \frac{1}{\left(T-t+\frac{\etamin}{n}\right)^2}  \bE \left[ \left.\int_t^T  \left( 2\eta_s +2\left( T-s+\frac{\etamin}{n} \right)^2\lambda_s \right)  \right| \cF_t \right].
    \end{split}
\end{equation*}
Hence
\begin{equation*}
    \begin{split}
&~\left( T-t+\frac{\etamin}{n} \right)\Psi^n_t\\
 \leq &~ \frac{2\etamin^2}{\etamin+n(T-t)}  + \frac{1}{\left(T-t+\frac{\etamin}{n}\right)}  \bE \left[\left. \int_t^T  \left( 2\eta_s +2\left( T-s+\frac{\etamin}{n} \right)^2\lambda_s \right)  \right| \cF_t \right] \\
\leq &~ 2\etamin +  \frac{1}{T-t}  \bE \left[\left. \int_t^T  \left( 2\eta_s +2\left( T-s+\frac{\etamin}{n} \right)^2\lambda_s \right)  \right| \cF_t \right] =\fC.
    \end{split}
\end{equation*}
Thus $\left( T-t+\frac{\etamin}{n} \right)A^n_t  \leq \fC$, that is $\|A^n\|_{\mathcal M^n_{-1}} \leq \fC$.
\end{proof}

\subsection{On Assumption \ref{ass-app}}

{
 Assumption \ref{ass-app} states that there exists a constant $C$ such that a.s. for any $0\leq r\leq s < T$
\[
    \exp\left(-\int_r^s \frac{A_u}{2\eta_u} du \right) \leq C \left( \frac{T-s}{T-r} \right).
\]
The left-hand side is equal to the optimal state process $\chi$ of the control problem studied in \cite{AJK-2014,GHS-2013} with initial value equal to 1 at time $r$. In particular from the proof of \cite[Theorem 4.2]{AJK-2014}, the process $M$ defined on $[r,T)$ by
\[
    M_s = \frac{1}{A_r} \left[ A_s \chi_s + 2\int_r^s \lambda_u \chi_u du \right]
\]
is a non-negative local martingale with $M_r= 1$. Hence for any $s\in [r,T)$
\[
    \exp\left(-\int_r^s \frac{A_u}{2\eta_u} du \right)= \chi_s \leq \frac{A_r}{A_s}  M_s \leq \frac{\etamax + T \lambdamax }{\etamin} \ \left( \frac{T-s}{T-r} \right) M_s =C \ \left( \frac{T-s}{T-r} \right) M_s.
\]
Since $M$ is also a non-negative supermartingale $M_t$ converges almost surely as $t$ goes to $T$ and the limit $M_T$ satisfies $\bE (M_T) \leq 1$. Therefore Assumption \ref{ass-app} does not strike us as overly restrictive.
}

\textsc{Proof of Lemma \ref{rem:suff_cond_ass-app}}. From \eqref{asymptotic-expansion-A}
\[
    -\frac{A_u}{2\eta_u} \leq -\frac{1}{\bE\left[\left.\int_u^T\frac{\eta_u}{\eta_s}\,ds\right|\mathcal{F}_u\right]} = -\frac{1}{\int_u^T\bE\left[\left.\frac{\eta_u}{\eta_s}\right|\mathcal{F}_u\right]\,ds}.
\]
By the very definition of uncorrelated multiplicative increments for $1/\eta$ and from \cite[Lemma 5.1]{AJK-2014}
\[
    -\frac{A_u}{2\eta_u} \leq  -\frac{1}{\int_u^T\bE\left[\frac{\eta_u}{\eta_s}\right]\,ds} = -\frac{1}{\int_u^T\frac{\bE\left[1/\eta_s\right]}{\bE\left[1/\eta_u\right]}\,ds} = -\frac{\bE\left[1/\eta_u\right]}{ \int_u^T\bE\left[1/\eta_s\right]\,ds} = \frac{1}{N_u} dN_u
\]
with $N_u := \int_u^T\bE\left[1/\eta_s\right]\,ds$. Hence
\[
    \exp\left(-\int_r^s \frac{A_u}{2\eta_u} du \right) = \exp\left(\int_r^s  \frac{1}{N_u} dN_u \right) = \frac{N_s}{N_r} = \frac{\int_s^T\bE\left[1/\eta_v\right]\,dv}{\int_r^T\bE\left[1/\eta_v\right]\,dv} \leq \frac{\etamax}{\etamin} \left( \frac{T-s}{T-r} \right).
\]
If $1/\eta$ is a positive martingale, then again from \cite[Lemma 5.1]{AJK-2014}, we get that $1/\eta$ has uncorrelated multiplicative increments. If $\eta$ is deterministic, we have directly that
\[
    -\frac{A_u}{2\eta_u} \leq -\frac{1}{\eta_u \int_u^T\frac{1}{\eta_s}\,ds} = \frac{1}{N_u} \frac{dN_u}{du}.
\]
\hfill $\Box$
\end{appendix}

\section*{Acknowledgments.}
Financial support by the Berlin Mathematical School (BMS), the TRCRC 190 {\it Rationality and competition: the economic performance of individuals and firms} and the Tier 2 grant ``Nonstandard BSDEs in Mathematical Finance: Theory, Application, Numerical Methods'' is gratefully acknowledged. We thank participants of  the IPAM workshop ``Mean Field Games'' for valuable comments and suggestions. We also thank seminar participants at various places for helpful comments and discussions.

\bibliography{biblio}

\end{document}